\theoremstyle{plain}
\newcommand{\nm@name}{Undefined Theorem Name}
\newtheorem*{nm@thm}{\nm@name}
\newenvironment{namedthm}[1][Undefined Theorem Name]{
  \renewcommand\nm@name{#1}\begin{nm@thm}}{\end{nm@thm}}
\numberwithin{equation}{section}
\numberwithin{figure}{section}
\theoremstyle{plain}
\newtheorem{thm}{Theorem}[section]
\newtheorem{question}[thm]{Question}
\newtheorem{cor}[thm]{Corollary}
\newtheorem*{cor*}{Corollary}
\newtheorem{prop}[thm]{Proposition}
\newtheorem{lem}[thm]{Lemma}
\theoremstyle{remark}
\newtheorem{rem}[thm]{Remark}
\newtheorem*{warn}{Warning}
\newtheorem*{claim*}{Claim}
\theoremstyle{definition}
\newtheorem{defn}[thm]{Definition}
\newtheorem{note}[thm]{Note}
\newcommand{\A}{\mathbb{A}}
\newcommand{\C}{\mathbb{C}}
\newcommand{\N}{\mathbb{N}}
\newcommand{\Pb}{\mathbb{Pb}}
\newcommand{\Q}{\mathbb{Q}}
\newcommand{\R}{\mathbb{R}}
\newcommand{\Z}{\mathbb{Z}}
\newcommand{\Oc}{\mathcal{O}}
\newcommand{\Gm}{\mathbb{G}_{\rm m}}
\newcommand{\bt}{\mathbf{t}}
\newcommand{\bk}{\mathbf{k}}
\newcommand{\bn}{\mathbf{n}}
\begin{document}

\title[On fewnomials]{On fewnomials, integral points and a toric
  version of Bertini's theorem}

\author{Clemens Fuchs}
\thanks{C.F.\ was supported by FWF (Austrian Science Fund) grant No.\
  P24574.}
\address[C.\ Fuchs]{University of Salzburg, Hellbrunnerstr.\ 34/I,
  A-5020 Salzburg}
\email{clemens.fuchs@sbg.ac.at}

\author{Vincenzo Mantova}
\thanks{V.M.\ was supported by the Italian FIRB 2010 ``New advances in
  the Model Theory of exponentiation''.}
\address[V.\ Mantova]{University of Camerino, Via Madonna delle
  Carceri 9, IT-62032 Camerino}
\curraddr{University of Leeds, LS2 9JT Leeds, UK}
\email{v.l.mantova@leeds.ac.uk}

\author[U.\ Zannier]{Umberto Zannier}
\thanks{The authors were also supported by the ERC-AdG 267273
  ``Diophantine Problems''.}
\address[U.\ Zannier]{Scuola Normale Superiore, Piazza dei Cavalieri
  7, IT-56126 Pisa}
\email{umberto.zannier@sns.it}

\begin{abstract}
  An old conjecture of Erd\H{o}s and R\'enyi, proved by Schinzel,
  predicted a bound for the number of terms of a polynomial
  $g(x)\in\C[x]$ when its square $g(x)^2$ has a given number of
  terms. Further conjectures and results arose, but some fundamental
  questions remained open.

  In this paper, with methods which appear to be new, we achieve a
  final result in this direction for completely general algebraic
  equations $f(x,g(x))=0$, where $f(x,y)$ is monic of arbitrary degree
  in $y$, and has boundedly many terms in $x$: we prove that the
  number of terms of such a $g(x)$ is necessarily bounded. This
  includes the previous results as extremely special cases.

  We shall interpret polynomials with boundedly many terms as the
  restrictions to 1-parameter subgroups or cosets of regular functions
  of bounded degree on a given torus $\Gm^l$. Such a viewpoint shall
  lead to some best-possible corollaries in the context of finite
  covers of $\Gm^l$, concerning the structure of their integral points
  over function fields (in the spirit of conjectures of Vojta) and a
  Bertini-type irreducibility theorem above algebraic multiplicative
  cosets.  A further natural reading occurs in non-standard
  arithmetic, where our result translates into an algebraic and
  integral-closedness statement inside the ring of non-standard
  polynomials.
\end{abstract}

\date{January 10th, 2017}

\maketitle

\section{Introduction}
\label{sec:intro}

This paper is concerned with algebraic equations involving
\textit{fewnomials}, also sometimes called \textit{sparse}, or
\textit{lacunary} polynomials.  By this we mean that the number of
\textit{terms} is thought as being fixed, or bounded, whereas the
degrees of these terms may vary, and similarly for the coefficients
(though they are sometimes supposed to be fixed as well).

This context traces back to several different viewpoints and
motivations. For instance, there are issues of reducibility (as in the
well-known old theory of \textsc{A.\ Capelli} for binomials, and in
more recent investigations for $k$-nomials, e.g.\ by \textsc{A.\
  Schinzel} \cite{Schinzel2000}). Sparse polynomials also occur when
thinking of \textit{complexity} in writing down an algebraic
expression; see for instance \textsc{J.\ Davenport}'s paper
\cite{Davenport2009} (which also mentions issues related to the ones
considered below). In turn, low complexity affects important
geometrical or topological aspects (as in \textsc{A.\ Khovanskii's}
theory \cite{Khovanski1991}).

One perspective and series of relevant questions appeared when
\textsc{P.\ Erd\H{o}s} and \textsc{A.\ R\'enyi} raised independently
the following attractive conjecture: \textit{Suppose that $g(x)$ is a
  (complex) polynomial such that $g(x)^{2}$ has at most $l$
  terms. Then the number of terms of $g(x)$ is bounded dependently
  only on $l$} \cite{Erdos1949}. It turned out that this problem was
not innocuous as it might appear; indeed, for infinitely many $l$ the
number of terms of $g(x)$ may be much larger than that of $g(x)^{2}$,
in fact $>l^{c}$ for a $c>1$, as was pointed out by Erd\H{o}s himself
\cite{Erdos1949,Schinzel2000}.

The conjecture was proved by Schinzel \cite{Schinzel1987}, actually
for $g(x)^{d}$ for any given $d > 0$. Schinzel also extended the
conjecture to compositions $p(g(x))$ for any given
$p\in\C[x]\setminus\C$, which could not be dealt with by his
methods. In turn, this was settled in \cite{Zannier2008}.

\subsection{Main results}
\label{sub:main}

One of the main purposes of the present paper is to achieve a `final'
result in the said direction, by treating general algebraic equations
$f(x,g(x))=0$, assuming that $f(x,y)\in\C[x,y]$ is a `fewnomial' in
$x$ and has arbitrary degree in $y$; we then seek a bound for the
number of terms of $g(x)\in\C[x]$.  We shall indeed prove that such a
bound exists and that it is actually uniform in the coefficients of
$f$, recovering the above mentioned conclusions related to the
Erd\H{o}s-R\'enyi conjecture (in sharper form) as very special
cases. For instance, we prove the following.

\begin{thm}\label{thm:main-simple}
  Let $f(x,y)\in\C[x,y]$ have $l$ terms in $x$ and be monic of degree
  $d > 0$ in $y$. If $g(x)\in\C[x]$ satisfies $f(x,g(x))=0$, then
  $g(x)$ has at most $B=B(d,l)$ terms.
\end{thm}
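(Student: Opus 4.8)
The plan is to convert ``number of terms of $g$'' into the order of a constant-coefficient linear operator in the Euler operator $\theta = x\,\frac{d}{dx}$ that annihilates $g$, and then to bound that order using the algebraic relation $f(x,g)=0$; the essential point, and the hard part, is to make the bound depend only on $d$ and $l$ and not on $\deg_x f$.

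\emph{Step 1 (the Euler operator).} Write $g=\sum_{k=1}^m b_k x^{n_k}$ with the $n_k$ pairwise distinct and $b_k\neq 0$. Since $\theta(x^n)=n x^n$, we have $\theta^r g=\sum_k b_k n_k^{\,r}x^{n_k}$, so the invertibility of the Vandermonde matrix $(n_k^{\,r})_{0\le r<m}$ shows that $\mathrm{span}_{\C}\{\theta^r g:r\ge 0\}=\mathrm{span}_{\C}\{x^{n_k}:k\}$ has dimension exactly $m$, and that $\prod_k(\theta-n_k)$ is, up to a scalar, the minimal constant-coefficient operator in $\theta$ killing $g$. Hence it suffices to prove $\dim_{\C}\mathrm{span}_{\C}\{\theta^r g:r\ge0\}\le B(d,l)$, equivalently that $g$ satisfies such an operator equation of order $\le B(d,l)$.

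\emph{Step 2 (differentiating along the curve).} Write $f=y^d+\sum_{i<d}a_i(x)y^i$ and $S=\{0\}\cup\bigcup_i\mathrm{supp}\,a_i$, so $|S|\le l$ and the span of $\{x^s:s\in S\}$ is a $\theta$-stable $\C$-subspace of dimension $\le l$ containing every $a_i$. Applying $\theta$ to $f(x,g)=0$ gives $f_y(x,g)\cdot\theta g=-(\theta_x f)(x,g)$, where $\theta_x$ acts on the coefficients only; here $\theta_x f$ has $y$-degree $\le d-1$ and $x$-exponents in $S$, while $f_y$ has $y$-degree $d-1$ and $x$-exponents in $S$. Iterating, and reducing modulo the minimal polynomial $\mu$ of $g$ over $\C(x)$ (of degree $e\le d$), one obtains $\theta^r g=N_r(x,g)/f_y(x,g)^{2r-1}$ with $\deg_y N_r<e$; equivalently $\theta$ extends to a $\C$-linear derivation of the finite $\C(x)$-algebra $\C(x)[y]/(f)$ after inverting $f_y$, and $g$ together with all its $\theta$-derivatives restricts to each of the (at most $d$) Puiseux branches of $f=0$ over $x=0$ and $x=\infty$, whose exponents lie, by Newton--Puiseux, in a fixed subgroup of $\Q$ of rank $\le l$ with denominators dividing $d!$.

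\emph{Step 3 (the uniform bound, and the obstacle).} It remains to bound $\dim_{\C}\mathrm{span}\{\theta^r g\}$ in terms of $d$ and $l$. The tempting route --- eliminate $g$ to get a monic fewnomial equation for $\theta g$ and induct on $d$ --- fails, because the eliminant's leading coefficient is $\mathrm{disc}_y f$, which need not be a fewnomial, and for the same reason one cannot reduce to separable $f$ by passing to $\mathrm{rad}_y f$. This is precisely what makes the problem hard: every standard elimination estimate introduces a dependence on $\deg_x f$, which $l$ does not control. The remedy is the toric viewpoint of the introduction: regard $f$ as the restriction, to the one-parameter subgroup $x\mapsto(x^s)_{s\in S}$, of a regular function $F$ on $\Gm^{\,|S|}\times\A^1$ of degree $1$ in the torus coordinates and $d$ in $y$, and regard a polynomial solution $y=g(x)$ as an integral point over $\C(x)$ of the finite cover $\{F=0\}\to\Gm^{\,|S|}$ lying above that subgroup. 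A function-field Vojta-type estimate --- in the spirit of Brownawell--Masser, applied to relations among $1,g,\dots,g^{e-1}$ and their $\theta$-derivatives whose only zeros and poles on $\mathbb{P}^1$ are at $0$ and $\infty$ --- should then bound, purely in terms of $d$ and $|S|\le l$, the number of exponents occurring in $g$; with Step~1 this produces $B(d,l)$ and proves the theorem. I expect Step~3 to be the main obstacle: extracting a genuinely degree-free estimate, and handling reducible or non-reduced $f$ by working with the multiplicity of the polynomial factor $y-g(x)\mid f$ rather than with $\mathrm{rad}_y f$.
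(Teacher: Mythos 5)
There is a genuine gap, and you have in fact located it yourself: Step~3 is not an implementation detail but the entire content of the theorem, and your proposal leaves it at the level of a hope (``a function-field Vojta-type estimate \emph{should} then bound\dots''). The difficulty is that the Brownawell--Masser/$S$-unit inequality over function fields, applied naively to relations among $1,g,\dots,g^{e-1}$ and their $\theta$-derivatives, does not give a degree-free bound: the quantities entering the right-hand side (the genus of the relevant function field, the number of places, the heights of the algebraic coefficients) all grow with the exponents $n_1,\dots,n_l$ of the monomials of $f$, i.e.\ with $\deg_x f$, which is exactly the dependence you must avoid. The paper's proof overcomes this with a specific mechanism your sketch does not contain: one works with the toric reformulation $f(x^{n_1},\dots,x^{n_l},g(x))=0$ and argues by induction on $l$, splitting the exponents into ``small'' ones $n_1,\dots,n_p$ and ``large'' ones $n_{p+1},\dots,n_l$ (relative to a parameter $\varepsilon$), expanding $g$ as a convergent series in the large monomials whose coefficients are algebraic functions of bounded degree over $\C(x)$ and of height/genus controlled by the \emph{small} exponents only; the $S$-unit inequality then yields a dichotomy (either a $\C$-linear dependence on boundedly many coefficients, or $n_p\gg_{\varepsilon} n_l$), and each horn is fed back into the induction on $l$. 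Crucially, this expansion is only available after a reduction to a ``regular'' situation via resolution of singularities: precisely because, as you note in Step~2, the branch exponents are $\Z$-combinations of the $n_i$ possibly with \emph{negative} coefficients, large exponents can conspire to become small, and without the regularity reduction the small/large separation breaks down. Your sketch acknowledges this phenomenon but offers no way around it.

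A second, smaller omission: even granting a bound of the right shape, the natural output of this circle of ideas is the \emph{rational-function} statement --- $g$ is a ratio of two fewnomials, not necessarily coprime (cf.\ the example $(x^n-1)/(x-1)$) --- and passing from that to the polynomial conclusion of the theorem is not automatic. The paper does this by a separate parametrisation-plus-integrality argument (writing all solutions via bounded systems of linear relations among exponents and using that a ratio of Laurent polynomials integral over $\C[z_1^{\pm1},\dots,z_s^{\pm1}]$ is itself a Laurent polynomial of bounded length). Your reduction in Step~1 to bounding $\dim_\C\operatorname{span}\{\theta^r g\}$ is correct but does not by itself bridge this polynomial/rational dichotomy, so even a successful version of your Step~3 would need this additional step.
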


The Erd\H{o}s-R\'enyi conjecture is re-obtained on taking
$f(x,y)=y^2-h(x)$ and also Schinzel's subsequent conjecture with
$f(x,y)=p(y)-h(x)$ (moreover uniformly in the coefficients of $p$).

Results of this type are strongly related to other (apparently far)
issues of arithmetic and geometric nature, as we now
illustrate. First, we remark that a convenient point of view, adopted
here, is to think of a (Laurent) fewnomial as \textit{the restriction
  of a given regular function on a torus $\Gm^{l}$ to a $1$-parameter
  subgroup or coset.} Indeed, a regular function on $\Gm^{l}$ is just
a Laurent polynomial $f(t_{1},\ldots,t_{l})$, whereas any connected
$1$-parameter subgroup (resp.\ coset) may be parametrized as
$t_{1}=x^{m_{1}},\ldots,t_{l}=x^{m_{l}}$ (resp.\
$t_{1}=c_{1}x^{m_{1}},\ldots,t_{l}=c_{l}x^{m_{l}}$) for integers
$m_{1},\ldots,m_{l}$ (resp.\ and nonzero constants
$c_{1},\ldots,c_{l}$). Hence, by substitution inside $f$, we obtain a
Laurent polynomial in $x$ whose number of terms is bounded
independently of the subgroup or coset.  \footnote{Naturally, a
  similar interpretation holds for multivariate fewnomials; however,
  the issues may be usually reduced to the basic case of a single
  variable by substitution.}

In this view, the above theorem can be rephrased in the following
equivalent form.

\begin{thm}\label{thm:main-poly}
  If $f\in\C[t_{1},\dots,t_{l},y] \setminus \C$ is monic in $y$ and of
  degree at most $d$ in each variable, if $n_{1},\dots,n_{l}$ are
  natural numbers, and if $g(x)\in\C[x]$ satisfies
  \begin{equation}
    f(x^{n_{1}},\dots,x^{n_{l}},g(x))=0,\label{eq:main-poly}
  \end{equation}
  then $g(x)$ has at most $B_{1}=B_{1}(d,l)$ terms.
\end{thm}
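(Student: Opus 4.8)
The plan is to prove the statement directly; it is elementarily equivalent to \prettyref{thm:main-simple} (grouping the monomials of an $f(x,y)$ with $l$ terms in $x$ and writing each power of $x$ as a squarefree monomial in the $l-1$ torus variables $t_i:=x^{c_{i+1}-c_i}$ puts it into the present form after clearing denominators and renormalising to a monic equation, while conversely $f(x^{n_1},\dots,x^{n_l},y)$ has at most $(d+1)^l$ terms in $x$). I would begin with the standard reductions: $f$ irreducible, monic in $y$ of degree $d\ge 2$ --- the case $d=1$ being immediate, since then $g=c(x^{n_1},\dots,x^{n_l})$ with $\deg_{t_i}c\le 1$ has at most $2^l$ terms --- together with $f(t,0)\not\equiv 0$, every $n_i\ge 1$, and, by a short descent replacing $x$ by $x^{1/e}$ whenever $g(x)=g(\zeta x)$ for a nontrivial root of unity $\zeta$, $\gcd(n_1,\dots,n_l)$ bounded in terms of $d$.

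Next I would pass to geometry. Put $X:=\{f=0\}\subseteq\Gm^l\times\A^1$, so $\pi\colon X\to\Gm^l$ is a finite cover of degree $d$ independent of $n$; the substitution is the one-parameter subgroup $\sigma_n\colon\Gm\to\Gm^l$, $x\mapsto(x^{n_1},\dots,x^{n_l})$, with image $H_n$, and $x\mapsto(\sigma_n(x),g(x))$ is a morphism $\Gm\to X$ whose image $C_n$ is an irreducible curve with $\pi(C_n)=H_n$. Thus $C_n$ is an irreducible component of the pullback $\pi^{-1}(H_n)=X\times_{\Gm^l}H_n$, and $g$ is a $\{0,\infty\}$-integral point of the closure $\overline{C_n}$ over $\mathbb{P}^1$. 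The first genuine step is to bound the number of terms of $g$ in terms of $d$, $l$ and the way $\overline{C_n}$ meets the boundary over $x=0,\infty$ in a fixed toric compactification $Y\times\mathbb{P}^1$ --- roughly, in terms of the number of branches of $\overline{X}$ that $\overline{C_n}$ hits there and of a bounded list of local intersection numbers --- rather than in terms of $\deg g$ or $\sum_i n_i$, which are unbounded. This is the point of the toric viewpoint: the monomials of $g$ are encoded in the Newton-polygon/Puiseux data of the two coordinate functions of $C_n$ near $0$ and $\infty$, and, $X$ being fixed, these data have bounded local complexity provided the combinatorics of how $C_n$ crosses the boundary strata is under control.

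The two external inputs are: (i) the function-field theory of integral points over $\C(x)$ --- the Mason--Stothers and Brownawell--Masser $S$-unit height inequalities, which in this genus-zero setting play the role of Vojta's conjecture --- giving a height bound for an $S$-integral point on a curve in terms of the genus, the degree and $|S|$; and (ii) a Bertini-type irreducibility theorem above algebraic multiplicative cosets, ensuring that $\pi^{-1}(H_n)$ decomposes no more than the monodromy of $\pi$ forces, so that the number and arithmetic of its components --- and hence the ramification of $C_n\to H_n$, the branches of $\overline{C_n}$ over the boundary, and the attendant exceptional set $S_n\subseteq\mathbb{P}^1$ (images of the ramification, together with $\{0,\infty\}$ and the toric boundary of $Y$) --- are bounded in terms of $d$ and $l$ alone, and in particular \emph{not} in terms of $n$. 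With $|S_n|$ so controlled, (i) bounds the height of $g$ on $\overline{C_n}$, and the translation of the previous step turns this into the required bound $B_1(d,l)$.

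The main obstacle will be exactly this uniformity in $n$: proving the toric Bertini statement --- that restricting a finite cover of $\Gm^l$ to a one-parameter subgroup or coset, including cosets inside the toric boundary, never splits it into more pieces than its monodromy allows, uniformly over all such subgroups --- and verifying that the resulting exceptional data stay bounded; the elementary reductions above are meant to bring $\pi^{-1}(H_n)$ into a shape where this is tractable. A convenient way to organise the uniformity is to pass to a non-principal ultrapower, where $n$ becomes a single tuple of (possibly infinite) non-standard natural numbers and $g$ a non-standard polynomial with finitely many terms: the whole assertion then collapses to a single algebraic and integral-closedness statement inside the ring of non-standard polynomials --- that an element which is algebraic, via the standard and finite-complexity equation $f$, over the non-standard monomials in the $x^{n_i}$ already has finitely many terms.
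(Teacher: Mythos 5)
There is a genuine gap, and it sits exactly where you yourself place ``the main obstacle''. Your external input (ii) --- a toric Bertini-type irreducibility theorem valid uniformly over all one-parameter subgroups and cosets, with exceptional data bounded in terms of $d$ and $l$ alone --- is not available as an ingredient: in this paper that statement is \prettyref{thm:bertini}, and it is \emph{deduced from} the main fewnomial theorem at the very end, not used to prove it; the only previously known result \cite{Zannier2010} concerns components of $1$-parameter subgroups and lacks precisely the uniformity you need (which was left open there). So, as organised, your plan is circular unless you supply an independent proof of the uniform Bertini statement, and you offer none; the ultrapower reformulation merely restates the uniformity to be proved, it does not produce it.

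Even granting (ii), the step ``height bound on $\overline{C_n}$ $\Rightarrow$ bound on the number of terms of $g$'' is not established. The normalisation of $C_n$ is rational and meets the boundary over $x=0,\infty$ in boundedly many points for every $n$, while $\deg g$ and its height grow with $n$; the number of monomials of $g$ is not a genus/degree/$|S_n|$-type invariant, so your input (i) cannot be applied to $C_n$ in the way you describe. The paper's actual mechanism is different and is absent from your sketch: one expands $z$ as a multivariate series $\sum_{\bk}\alpha_{\bk}\bt^{\bk}$ in the variables with ``large'' exponents, with coefficients algebraic over the ``small'' ones, specialises to $g(x)=\sum_{\bk}\tilde{\alpha}_{\bk}x^{\bk\cdot\bn}$, and applies the Brownawell--Masser-type inequality (\prettyref{lem:s-units}) in the auxiliary function field generated by boundedly many $\tilde{\alpha}_{\bk}$ --- a field whose genus and $S$-set are \emph{not} bounded in $d,l$ but grow with $n_p$, the point being that the inequality trades $n_l$ against $n_p$ and yields a dichotomy (either $g$ is a $\C$-linear combination of boundedly many $\tilde{\alpha}_{\bk}x^{\bk\cdot\bn}$, or $n_p\geq\varepsilon' n_l$) which drives a double induction on $l$ and on the cut $p$ between small and large exponents. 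Your assertion that the Puiseux/Newton data of $C_n$ at $0$ and $\infty$ ``have bounded local complexity'' is exactly what fails naively: when $g(0)$ is a multiple root of $f(\mathbf{0},y)$ the exponents occurring are $\Z$-combinations of the $n_i$ in which negative coefficients appear, so large exponents can combine into small ones, and the paper must invoke resolution of singularities (the reduction to regular solutions in \prettyref{sec:regular}) to exclude this; nothing in your outline addresses it. Finally, even once the rational-function statement is in hand, one still needs the parametrisation-and-integrality argument of \prettyref{sec:induction} to upgrade ``ratio of two fewnomials'' to ``fewnomial'' for monic $f$; that step, too, is missing from your proposal.
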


The numbers $B, B_{1}$ are actually effective, although we skip the
details of such calculation. This leads, as we shall see, to a
complete algorithmic description of all the possible solutions
$g(x)$. Note that moreover the bound is independent of the
coefficients of $f$, so that the conclusion remains valid if we use
the substitution $t_i \mapsto \lambda_i x^{n_i}$ for some arbitrary
numbers $\lambda_i \in \C$.

This viewpoint for instance suggests a generalization of the concept
of `fewnomial' to the case of powers of abelian
varieties. \footnote{None of the results of this paper is known in
  that case, and it seems of interest to ask whether an analogue of
  the R\'eny-Erd\H{o}s or Schinzel's conjecture are true in that
  context, already replacing $\Gm$ with an elliptic curve.} A relevant
result in this direction can be found in \cite{Lu2010}. But, more
important here, this is useful in the development of the proofs, and
it also suggests a number of links with other topics. We will discuss
in a moment those which appear to us more relevant.

\medskip

We also point out the following \emph{dichotomy between polynomials
  and rational functions}: we shall state a version of
\prettyref{thm:main-poly} for rational functions (see
\prettyref{thm:main}), where we drop the assumption that $f$ is monic,
and the conclusion will say that $g(x)$ can be written as the
\emph{ratio} of two polynomials with a bounded number of terms that
are \emph{not necessarily coprime}. An instance of this behavior is
the cyclotomic polynomial $g(x) = 1 + \dots + x^{n-1}$, which solves
the equation $(x^n-1) - g(x)(1-x) = 0$ without being a fewnomial, but
that in fact can be written as $g(x) = \frac{x^n - 1}{x-1}$ (observe
that the equation is indeed not monic).  This phenomenon is intrinsic
to the problem, and in fact many results here will be stated twice to
account for both polynomials and rational functions.

The suitable statement for rational functions can be deduced straight
away from \prettyref{thm:main-poly}; however, in this paper we shall
actually proceed in the opposite direction, first proving a theorem
for rational functions, and then recovering \prettyref{thm:main-poly}
(see the final argument in \prettyref{sec:induction}).

\subsection{Integral points on varieties over function fields.}
\label{sub:integral}

Many attractive Diophantine problems concern the $S$-integers
$\Oc_{S}$ and the $S$-units $\Oc_{S}^{*}$ in a number field $K$.
\footnote{We recall that
  $\Oc_{S}=\{x\in K:|x|_{v}\le1\ \forall v\not\in S\}$; for instance,
  for $K=\Q$, the $S$-units are those rationals with numerator and
  denominator made up only of primes in the finite set $S$.} The
latter may be also described as just the $S$-integral points for
$\Gm$. For instance, the \textit{Mordell-Lang conjecture for tori}
yields a description of the $S$-integral points on subvarieties $W$ of
$\Gm^{l}$, so the points on $W$ having $S$-unit coordinates. Such
description follows from the $S$-unit theorem of \textsc{J.H.\
  Evertse}, \textsc{H.P.\ Schlickewei} and \textsc{A.J.\ van der
  Poorten}, while the general conjecture for tori is a theorem of
\textsc{M.\ Laurent} since the '80s; see \cite[Thm.\
7.4.7]{Bombieri2006}.

Instead, much less is known for $S$-integral points on \textit{finite
  covers} of $\Gm^{l}$ (except for the case of curves). Take for
instance the simple-looking equation $y^{2}=1+x_{1}+x_{2}$, to be
solved with $x_{1},x_{2}\in\Oc_{S}^{*}$ and $y\in\Oc_{S}$. This
represents a double cover of $\Gm^{2}$, on which we seek the
$S$-integral points. Alternatively, they may be described as the
$S$-integral points for the affine variety obtained as the complement
in $\Pb_{2}$ of two lines and a suitable conic (see
\cite{Corvaja2008}).  Now, this is a divisor of degree $4$ with normal
crossings, so a celebrated conjecture of \textsc{P.\ Vojta} predicts
that the solutions are not Zariski-dense, but this has not yet been
proved (see \cite[\S 14.3]{Bombieri2006}; this special case was
proposed explicitly by \textsc{F.\ Beukers} in \cite{Beukers1995}).
\footnote{This is indeed a `borderline' case of Vojta's conjecture on
  integral points, one of the simplest but yet unsolved ones. See
  \cite{Corvaja2008} for a proof in the function field context. }

A related form of this problem has been recently proposed by
\textsc{D.~Ghioca} and \textsc{T.~Scanlon} while studying the
dynamical Mordell-Lang conjecture in positive
characteristic. Specifically, for a given prime $p$, they ask about
the integer solutions of $f(y)=c_{1}p^{a_{1}}+\cdots+c_{l}p^{a_{l}}$,
in the unknowns $y,a_{1},\ldots,a_{l}$, where the polynomial $f$ and
the constants $c_{1},\ldots,c_{l}$ are given. Since $p^{a_{i}}$ are
$S$-units, this is in turn a special case of seeking the integral
points on the cover of $\Gm^{l}$ given by $f(y)=x_{1}+\cdots+x_{l}$.

The methods so far known do not suffice even to treat the former
equation (see \cite{Corvaja2013a} for some special cases). Actually,
the problem arises even in writing down what is expected to be the
most general form of solution. Note that any identity of the shape
$f(g(x))=c_{1}x^{m_{1}}+\cdots+c_{l}x^{m_{l}}$, for a polynomial $g$,
would produce solutions simply by setting $x=p^{a}$.  Hence, it is a
primary task to write down all such identities. Note also that such an
identity (considered now over $\C$) represents an $S$-integral point
on the said cover, but now relative to the function field $\C(x)$ and
set $S=\{0,\infty\}$: \footnote{Here, in accordance with quite a
  general principle, the integral points over a function field may be
  used to parametrize integral points over a number field.} in fact,
the $S$-units of $\C(x)$ are precisely the monomials $cx^{m}$.

This example makes evident the connection of these topics on integral
points with the topic of fewnomials (and with the R\'enyi-Erd\H{o}s
and Schinzel's mentioned conjectures); indeed, in the case of the
problem of Ghioca and Scanlon a complete description \textit{in finite
  terms} of the relevant identities follows from Theorem 2 of
\cite{Zannier2008}.

The results of the present paper yield a corresponding description in
a rather more general situation. Namely, in dealing with an arbitrary
finite cover $\pi:W\to\Gm^{l}$, they allow us to parametrize all the
regular maps $\rho:\Gm\to W$ (i.e., the $S$-integral points on $W$,
with respect to the function field $\C(x)$ and set
$S=\{0,\infty\}$). \footnote{The case of more general function fields
  or even more general sets $S$ is not known to us and seems to
  present subtle difficulties; this happens already by taking
  $S=\{0,1,\infty\}$. See \cite{Corvaja2013} for some cases related to
  surfaces.}

\begin{thm}\label{thm:vojta}
  Let $\pi:W\to\Gm^{l}$ be a finite map. Then there exist a finite set
  $\Psi$ of regular maps $\psi:V\times\Gm^{s}\to W$, with $s=s_{\psi}$
  an integer and $V=V_{\psi}$ an affine algebraic variety, such that
  for every regular map $\rho:\Gm\to W$ there exist a $\psi\in\Psi$, a
  point $\xi\in V_{\psi}(\C)$ and a regular map $\gamma:\Gm\to\Gm^{s}$
  with $\rho=\psi_{\xi}\circ\gamma$.
\end{thm}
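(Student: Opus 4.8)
The plan is to reduce the problem, component by component, to \prettyref{thm:main-poly}, and then to organise the bounded data that emerges into finitely many algebraic families. Since $\pi$ is finite and $\Gm^l$ is affine, $W$ is affine; fixing generators $y_1,\dots,y_r$ of $\C[W]$ as an algebra over $R:=\C[t_1^{\pm1},\dots,t_l^{\pm1}]$ realises $W$ as a closed subvariety of $\Gm^l\times\A^r$, with $\pi$ the first projection and with a finitely generated defining ideal $I\subseteq R[y_1,\dots,y_r]$. Finiteness of $\pi$ makes each $y_j$ integral over $R$, so $I$ contains a monic relation $F_j=y_j^{d}+q_{j,d-1}(t)y_j^{d-1}+\dots+q_{j,0}(t)$ with the $q_{j,i}$ fixed Laurent polynomials. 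A regular map $\rho\colon\Gm\to W$ is the same as a $\C$-algebra homomorphism $\rho^*\colon\C[W]\to\C[x^{\pm1}]$; the units of $\C[x^{\pm1}]$ being the monomials, $\rho^*t_i=c_ix^{n_i}$ for suitable $c_i\in\C^*$ and $n_i\in\Z$, and we put $g_j:=\rho^*y_j\in\C[x^{\pm1}]$. Then $\rho^*F_j=0$ is a monic equation of degree $d$ for $g_j$ over $\C[x^{\pm1}]$ whose coefficients are the fixed $q_{j,i}$ evaluated at $t_i\mapsto c_ix^{n_i}$; after multiplying through by a large power of $x$ one gets an honest polynomial identity $h_j^{d}+\sum_i p_{j,i}(x)h_j^i=0$ with $h_j:=x^{N}g_j\in\C[x]$ and each $p_{j,i}\in\C[x]$ having at most $\#\,\mathrm{supp}(q_{j,i})$ terms, a bound independent of the $c_i$ and $n_i$. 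Writing each $p_{j,i}$ as a polynomial of degree $\le1$ in the monomials $x^{n}$ ($n\ge0$) occurring in it, \prettyref{thm:main-poly} applies (in the coefficient-uniform form noted after its statement) and bounds the number of terms of $h_j$, hence of $g_j$, by some $B'=B'(W)$.

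Now I stratify the maps $\rho$ by combinatorial type. Having bounded the terms, write $g_j=\sum_k a_{jk}x^{e_{jk}}$ with at most $B'$ summands, and form $\mathbf e=(n_1,\dots,n_l,(e_{jk})_{j,k})\in\Z^M$, where $M\le l+rB'$. Substituting $t_i\mapsto c_ix^{n_i}$ and $y_j\mapsto\sum_k a_{jk}x^{e_{jk}}$ into a generator of $I$ produces a Laurent polynomial which is a sum of boundedly many monomials $x^{\ell(\mathbf e)}$, with $\ell$ ranging over a finite set of integral linear forms on $\Z^M$ that depends only on $I$ and on the supports of the $g_j$, not on the actual exponents or coefficients. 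The \emph{type} of $\rho$ records those supports together with, for each generator of $I$, the partition of its monomials into groups on which $\ell(\mathbf e)$ is constant; since everything here is bounded there are only finitely many types. For a fixed type, the equalities $\ell_1(\mathbf e)=\ell_2(\mathbf e)$ dictated by the partitions cut out a sublattice $L\subseteq\Z^M$; fix an isomorphism $L\cong\Z^s$ (so $s\le M$), realised by an integer matrix, and let $\beta^{(i)},\alpha^{(jk)}\in\Z^s$ denote the rows corresponding to the coordinates $n_i$ and $e_{jk}$.

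For each type, let $V$ be the affine variety with coordinate ring $\C[C_1^{\pm1},\dots,C_l^{\pm1},(A_{jk})_{j,k}]/J$, where $J$ is generated by the coefficients, in $\C[C^{\pm1},A]$, of all monomials in $u=(u_1,\dots,u_s)$ occurring in the result of substituting $t_i\mapsto C_iu^{\beta^{(i)}}$ and $y_j\mapsto\sum_k A_{jk}u^{\alpha^{(jk)}}$ into the generators of $I$; and let $\psi\colon V\times\Gm^s\to W$ be given by $\psi^*t_i=C_iu^{\beta^{(i)}}$ and $\psi^*y_j=\sum_k A_{jk}u^{\alpha^{(jk)}}$. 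This is a well-defined morphism, because $\psi^*$ kills $I$ by construction and sends each $t_i$ to a unit. Conversely, if $\rho$ has this type then its exponent tuple $\mathbf e$ lies in $L$, say $\mathbf e$ is the image of $m\in\Z^s$; taking $\gamma\colon\Gm\to\Gm^s$, $x\mapsto(x^{m_1},\dots,x^{m_s})$, and $\xi\in V$ the point with coordinates $C_i=c_i$, $A_{jk}=a_{jk}$ — which lies in $V$ precisely because $\rho$ satisfies the relations in $I$ — one checks by evaluating on the algebra generators $t_i,y_j$ that $\psi_\xi\circ\gamma=\rho$. Letting $\Psi$ be the finite set of all these $\psi$, as the type ranges over its finitely many values, finishes the proof.

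I expect essentially all the difficulty to be concentrated in the step that invokes \prettyref{thm:main-poly}: the bound on the number of terms of the components $g_j$ is exactly the content of that theorem, and the rest is bookkeeping. The one delicate point in that bookkeeping is that when $m$ lies in a proper sublattice of $L$ some of the $u$-monomials above collide, so $\psi_\xi\circ\gamma$ can exhibit a strictly coarser cancellation pattern than the generic one attached to its type; this causes no trouble, however, since the relations defining $V$ for the coarser pattern are implied by those imposed for the finer one, so $\psi_\xi\circ\gamma$ still maps into $W$ and no map $\rho$ is overlooked.
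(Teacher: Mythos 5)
Your proposal is correct and takes essentially the same route as the paper's proof: use the main fewnomial theorem to bound the number of terms of the coordinate functions of $\rho$, then stratify the solutions by the finitely many coincidence patterns among the exponent linear forms, each pattern yielding a lattice $\Z^{s}$ for the exponents (giving $\gamma$) and an affine variety $V$ for the coefficients (giving $\xi$), hence a map $\psi:V\times\Gm^{s}\to W$ through which $\rho$ factors. The only difference is one of bookkeeping: the paper first reduces to a hypersurface presentation $f(t_{1},\dots,t_{l},y)=0$ monic in $y$ and invokes \prettyref{thm:main} together with the systems (I)/(II) from the proof of \prettyref{thm:main-poly}, whereas you keep the full embedding $W\subseteq\Gm^{l}\times\A^{r}$ and apply \prettyref{thm:main-poly} directly to each generator of $\C[W]$ — the substance is the same, and your final ``delicate point'' is in fact harmless exactly as you say, since each $\rho$ is assigned its own exact collision pattern.
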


Here $\psi_{\xi}$ denotes the restriction of $\psi$ to
$\{\xi\}\times\Gm^{s}$. The special case $l=2$ of this theorem appears
as Theorem 5.1 in \cite{Corvaja2006}, in different phrasing and with a
completely different (and somewhat involved) proof.

We therefore see that any `$S$-integral point' factors through a map
$\psi_{\xi}:\Gm^{s}\to W$ of \textit{bounded degree}, in the sense
that the inverse image of a hyperplane section of $W$ has bounded
degree in $\Gm^{s}\subset\Pb_{s}$. This can be expressed in terms of
boundedness of the \textit{heights} of the integral points.  Such
conclusion, which is in a sense best-possible, proves Vojta's
conjectures for $W$ and the integral points in question. \footnote{See
  e.g.\ \cite[\S 14]{Bombieri2006} for a general formulation of
  Vojta's conjectures, especially over number fields.  For brevity we
  omit here any further detail or example.} As an application, we can
prove the following corollary.

\begin{cor}\label{cor:vojta}
  Suppose that the union of images of the regular non-constant maps
  $\rho:\Gm\to W$ is Zariski-dense. Then the branch locus of $\pi$ in
  $\Gm^l$ is invariant by translation by an algebraic subgroup of
  positive dimension.
\end{cor}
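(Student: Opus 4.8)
The plan is to feed the finite list $\Psi$ produced by \prettyref{thm:vojta} back into the hypothesis, extract from it a distinguished positive‑dimensional subtorus $T\le\Gm^l$, and then prove that the branch locus of $\pi$ is a union of cosets of $T$. Throughout I assume, as is standard for a finite cover, that $W$ is smooth and irreducible, that $\pi$ is surjective, and that each $V_\psi$ is irreducible; the reductions to this case, together with the trivial case in which $\pi$ is already unramified (so the branch locus is empty), I regard as routine. To produce the subtorus: since $\Psi$ is finite, $W$ is irreducible, and the union of the images of the \emph{non-constant} regular maps $\rho\colon\Gm\to W$ is Zariski-dense, there is a single $\psi_0\in\Psi$, with source $V\times\Gm^s$, for which already the non-constant maps of the form $(\psi_0)_\xi\circ\gamma$ have Zariski-dense union of images; in particular $\psi_0$ is dominant. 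Writing $\pi\circ\psi_0\colon V\times\Gm^s\to\Gm^l$ out coordinatewise, each coordinate is a nowhere-vanishing regular function, i.e.\ a unit of the Laurent polynomial ring $\Oc(V)[u_1^{\pm1},\dots,u_s^{\pm1}]$ over the domain $\Oc(V)$, hence has the shape $\lambda_i(\xi)\,u^{A_i}$ with $\lambda_i\colon V\to\Gm$ a morphism and $A_i\in\Z^s$; thus $\pi\circ\psi_0(\xi,u)=\lambda(\xi)\cdot\mu(u)$ for a group homomorphism $\mu\colon\Gm^s\to\Gm^l$ and a morphism $\lambda=(\lambda_1,\dots,\lambda_l)\colon V\to\Gm^l$. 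Set $T:=\mathrm{im}(\mu)$, an algebraic subtorus of $\Gm^l$. If $T$ were trivial, each $(\psi_0)_\xi$ would map $\Gm^s$ into the finite fibre $\pi^{-1}(\lambda(\xi))$ and so be constant, making every $\rho$ through $\psi_0$ constant — contrary to the choice of $\psi_0$. Hence $\dim T\ge1$, and $T$ will be the subgroup in the conclusion.

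The next step is to reduce the corollary to a statement about the generic coset of $T$. Let $q\colon\Gm^l\to Q:=\Gm^l/T$ be the quotient and $Z\subseteq\Gm^l$ the branch locus of $\pi$; by purity of the branch locus $Z$ is pure of codimension one (or empty). I claim it suffices to show that $\pi$ is \'etale over $T_\eta:=q^{-1}(\eta)$ for all $\eta$ in a dense open subset of $Q$, i.e.\ that $Z\cap T_\eta=\emptyset$ for generic $\eta$. Indeed, this says that $q|_Z$ is not dominant, so each irreducible component $Z_i$ of $Z$ has $\overline{q(Z_i)}$ of codimension $\ge1$ in $Q$; comparing dimensions — $\dim Z_i=l-1$, while $q^{-1}(\overline{q(Z_i)})$ is irreducible of dimension $\dim\overline{q(Z_i)}+\dim T$ and contains $Z_i$ — forces $\dim\overline{q(Z_i)}=\dim Q-1$ and $Z_i=q^{-1}(\overline{q(Z_i)})$. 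Hence every $Z_i$, and thus $Z$, is invariant under translation by $T$, which is the desired conclusion.

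The heart of the argument, and the part I expect to be the main obstacle, is proving that $\pi$ is \'etale over $T_\eta$ for generic $\eta$. Fix a generic $\eta$. By generic smoothness of $q\circ\pi\colon W\to Q$ (using $W$ smooth, characteristic $0$) the fibre $\pi^{-1}(T_\eta)$ is smooth of dimension $r:=\dim T$; and since $\psi_0(V\times\Gm^s)$ contains a dense open of $W$, for generic $\eta$ each irreducible component $W'$ of $\pi^{-1}(T_\eta)$ equals $\overline{(\psi_0)_\xi(\Gm^s)}$ for some $\xi$ with $q(\lambda(\xi))=\eta$. For such $\xi$, $\pi|_{W'}\circ(\psi_0)_\xi$ is $\mu$ followed by a translation, which — after dividing out the connected component of $\ker\mu$ — is an isogeny $\bar\mu\colon\Gm^r\to T_\eta$, and $(\psi_0)_\xi$ descends to a dominant map $\bar\psi\colon\Gm^r\to W'$ with $\pi|_{W'}\circ\bar\psi=\bar\mu$ finite \'etale. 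I would then fix an arbitrary one-dimensional subtorus $T_0\le T$ and a generic coset $\Gamma\cong\Gm$ of $T_0$ in $T_\eta$: by generic smoothness of $W'\to T_\eta/T_0$ the curve $\pi^{-1}_{W'}(\Gamma)$ is smooth, $\bar\mu^{-1}(\Gamma)$ is a disjoint union of copies of $\Gm$ (being a finite \'etale cover of $\Gm$), and $\bar\psi$ restricts to a dominant map $\bar\mu^{-1}(\Gamma)\to\pi^{-1}_{W'}(\Gamma)$. So each component of $\pi^{-1}_{W'}(\Gamma)$ is a smooth affine curve that is dominated by $\Gm$ and finite over $\Gamma\cong\Gm$, hence — by the classification of such curves (it is rational with at most two points at infinity and maps finitely onto $\Gm$, so it is $\cong\Gm$ with the map of the form $t\mapsto ct^m$) — is \'etale over $\Gamma$. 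Thus $\pi|_{W'}$ is \'etale over the generic $T_0$-coset, so its branch locus $B'\subseteq T_\eta$ (pure of codimension one by purity) does not dominate $T_\eta/T_0$; by the dimension count of the previous step, $B'$ is $T_0$-invariant. Letting $T_0$ vary over all one-dimensional subtori of $T$ forces $B'$ to be $T$-invariant, and as a proper closed subset of the single $T$-coset $T_\eta$ it must be empty. Since $\pi^{-1}(T_\eta)$ is smooth, its components are pairwise disjoint, so $\pi$ is \'etale over $T_\eta$, as wanted.

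The genuinely delicate point is this last step: one cannot simply transport \'etaleness backwards through the dominant map $\bar\psi$, because $\bar\mu$ may be a nontrivial isogeny and $\bar\psi$ need not be surjective, so ramification of $\pi|_{W'}$ could a priori sit outside the image of $\bar\psi$. The device above circumvents this by restricting everything to a generic one-parameter coset, where the (normalised) restricted cover is forced to be a genuine map $\Gm\to\Gm$ and hence unramified, and then by letting the one-parameter direction vary inside $T$. The remaining work — making precise the genericity statements (generic smoothness of $q\circ\pi$ and of $W'\to T_\eta/T_0$, and that a generic $T_0$-coset still carries a lift of $\bar\mu$), the classification of curves dominated by $\Gm$ and finite over $\Gm$, purity of the branch locus, and the initial reductions on $W$ and on $\pi$ — is routine, if not entirely short.
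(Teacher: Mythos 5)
Your proposal follows the same skeleton as the paper's proof: from the finite family of \prettyref{thm:vojta} extract one dominant $\psi_0$, write $\pi\circ\psi_0(\xi,u)=\lambda(\xi)\,\mu(u)$ with $\mu$ a homomorphism whose image $T$ is a positive-dimensional subtorus, show that the branch locus misses the generic coset of $T$, and finish by purity plus the codimension-one dimension count, exactly as in the paper's last lines. Where you genuinely diverge is the middle step. The paper argues that each section $\psi_\xi$ is unramified over its coset (being essentially an isogeny), asserts that consequently the ramification divisor $R$ cannot meet its image, and notes that for $z$ outside a proper closed set the image of $\psi$ meets every component of $\pi^{-1}(\Gm^s\times\{z\})$; hence $R$ misses the generic coset fibre. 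You, distrusting exactly the assertion that unramifiedness of the composite keeps the image away from $R$ (a point the paper treats in one line, and which does require more than the differential of the composite, since $\psi_\xi$ is only dominant onto one component and only an $s$-dimensional piece of the tangent space is controlled), instead slice the generic coset fibre by generic one-parameter cosets, invoke the classification of smooth affine curves dominated by $\Gm$ and finite over $\Gm$ to get \'etaleness slice by slice, vary the one-dimensional subtorus to force the branch locus inside the coset to be $T$-invariant and hence empty, and return to the ambient branch locus via disjointness of components and counting preimages. This is more laborious than the paper's argument, but it makes explicit a delicacy the paper elides, and both routes deliver the same conclusion.

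Two caveats you should repair in a final write-up. First, the reduction to $W$ \emph{smooth} is not routine: normalization keeps $\pi$ finite but yields only a normal $W$, while resolving singularities destroys finiteness and alters the branch locus. What you can, and in effect must, assume is that $W$ is normal (the paper's own appeal to purity presupposes this; for non-normal $W$ the branch locus also contains the image of the non-normal locus). Your argument survives with ``normal'' in place of ``smooth'': in characteristic zero general fibres of a dominant morphism from a normal variety are again normal, general one-dimensional slices are then normal, hence smooth, curves, and normality suffices for purity, for the disjointness of the components of $\pi^{-1}(T_\eta)$, and for the counting step. Second, the two passages from ``the restricted cover over the slice is \'etale with pairwise disjoint components'' to ``$\pi$ has no branch point on the slice'' are correct but not automatic: they rest on the fact that for a finite map with normal source the branch locus is precisely where the number of preimages drops below the degree; spell that counting out, since your closing sentence currently presents it as an immediate consequence of smoothness and disjointness alone.
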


This is a useful condition which fits within a classification of
\textsc{Y.\ Kawamata} (see, for instance, the remark after Thm.\ 2 of
\cite{Corvaja2013} or \S 5.5.5 of the recent book by Noguchi-Winkelman
\cite{Noguchi2014}).

Moreover, this language makes it also more obvious how to prove the
special case $l = 1$ (we thank one of the anonymous referees for
pointing out this argument). Indeed, for $l = 1$ an integral point is
a regular map $\rho : \Gm \to W$ such that the composition
$\pi \circ \rho : \Gm \to W \to \Gm$ is the map
$x \mapsto \theta x^n$, so an isogeny composed with a
translation. Suppose that one such point exists. It follows easily
that the normalization of $W$ is in fact isomorphic to $\Gm$ itself,
and all the integral points can then be easily classified.

\subsection{A `Bertini Theorem' for covers of tori.}
\label{sub:bertini}

Consider again a (ramified) cover $\pi:W\to\Gm^{l}$, by which we mean
a dominant map of finite degree $e$ from the irreducible algebraic
variety $W/\C$. When $\Gm^{l}$ is replaced by the affine space
$\A^{l}$, a version of the Bertini Irreducibility Theorem asserts that
for $l>1$, if $H$ is a `general' hyperplane in $\A^{l}$, the fiber
$\pi^{-1}(H)$ is still irreducible. In the present context one may
replace $H$ by a `general' algebraic subgroup (or coset) of $\Gm^{l}$
and ask about the same conclusion. Of course, a marked contrast with
the Bertini case is that the algebraic subgroups now form a discrete
family, which prevents standard methods to work in this context. In
\cite[Thm.\ 3]{Zannier2010} a positive result was obtained, however
concerning irreducibility only above components of 1-parameter
subgroups, and not above arbitrary cosets.

Now, the arguments and results of this paper (completely independent
of \cite{Zannier2010}) directly lead to a toric analogue of Bertini's
Theorem without the said restriction.

\begin{thm}\label{thm:bertini}
  Let $W$ be a quasi-projective variety and $\pi:W\to\Gm^{l}$ be a
  (complex) dominant rational map of finite degree $e$, and suppose
  that the pullback $[e]^{*}W$ is irreducible. Let $X \subseteq W$ be
  a proper algebraic subvariety such that $\pi_{\vert W \setminus X}$
  is finite onto its image. Then there exists a finite union
  $\mathcal{E}=\mathcal{E}_{\pi, X}$ of proper algebraic subgroups of
  $\Gm^{l}$ such that if $H$ is a connected algebraic subgroup not
  contained in $\mathcal{E}$, then for all $\theta\in\Gm^{l}$,
  $\pi^{-1}(\theta H) \setminus X$ is irreducible.
\end{thm}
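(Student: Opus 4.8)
The plan is to deduce this Bertini-type statement from \prettyref{thm:vojta} (or, more precisely, from the parametrisation of regular maps $\Gm\to W$ that underlies it), by a Galois-theoretic argument. First I would set up the situation: after replacing $W$ by its normalisation and shrinking $X$, I may assume $\pi$ is finite and $W$ normal; the hypothesis that $[e]^*W$ is irreducible is the toric analogue of ``$\pi$ does not become reducible after pulling back along an isogeny'', and it guarantees that the geometric monodromy group of $\pi$ acts transitively and, more importantly, is not diminished by passing to finite-index subgroups of $\Gm^l$ corresponding to isogenies. The goal is to find the exceptional set $\mathcal{E}$ of proper subgroups such that for $H\not\subseteq\mathcal{E}$ and any $\theta$, the cover $\pi^{-1}(\theta H)\setminus X\to\theta H$ is connected. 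I would phrase connectedness as: the monodromy of the restricted cover over $\theta H$ (which is a coset, parametrisable as $t_i=\theta_i c_i x^{n_i}$) still acts transitively on the generic fibre.

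The key step is a contrapositive/compactness argument. Suppose the conclusion fails: then for infinitely many connected subgroups $H$ (not all lying in any fixed finite union of proper subgroups) and suitable $\theta$, the fibre product $\pi^{-1}(\theta H)\setminus X$ splits off a proper component $W'$. Choosing a component that dominates $\theta H$ of degree $e'<e$, I get a regular (or rational) map from a curve-parametrised family into $W$; more concretely, picking a parametrisation $\gamma_H:\Gm\to\theta H\subseteq\Gm^l$ and lifting to the component $W'$, I obtain a regular map $\rho_H:\Gm\to W$ (after normalising and removing the bad locus $X$, using that $\pi$ is finite off $X$) whose image is constrained to lie in a proper subvariety — namely the Zariski closure of $W'$, which has degree bounded in terms of $e$ and the degree of $X$, but whose projection to $\Gm^l$ is the coset $\theta H$, a set of unbounded ``complexity''. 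Now \prettyref{thm:vojta} says every such $\rho_H$ factors as $\psi_\xi\circ\gamma$ through one of finitely many $\psi:V_\psi\times\Gm^{s}\to W$ of bounded degree. The bounded degree of $\psi_\xi$ forces the composition $\pi\circ\psi_\xi:\Gm^{s}\to\Gm^l$ to have bounded degree, hence its image is a coset of a subgroup from a finite list depending only on the $\psi$'s; but that image must contain $\theta H$, so $H$ itself is forced into a finite list of subgroups — and into this list I also throw the finitely many subgroups over which $[e]^*W$ could conceivably degenerate, together with those coming from the branch locus of $\pi$. Declaring $\mathcal{E}$ to be (the proper subgroups among) this finite list yields the theorem.

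The main obstacle I anticipate is the bookkeeping that turns ``$\pi^{-1}(\theta H)\setminus X$ reducible'' into a genuine regular map $\rho_H:\Gm\to W$ of the sort \prettyref{thm:vojta} accepts. Two points need care. First, a reducible fibre over the \emph{coset} $\theta H$ need not split over $H$ itself, and a priori the splitting could be governed by the translation part $\theta$ rather than by $H$; I would handle this by first using the $[e]^*W$-irreducibility hypothesis to show that reducibility of $\pi^{-1}(\theta H)$, for $\theta$ a torsion point or more generally, is controlled by $H$ modulo a finite correction, essentially because the monodromy over a subgroup equals the monodromy over any of its torsion translates up to conjugacy, and a Hilbert-irreducibility-type specialisation in the one remaining parameter $x$ on $\theta H$ transfers reducibility of the geometric fibre to reducibility over $\C(x)$. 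Second, the component $W'$ need not be unibranch or smooth, so ``lifting $\gamma_H$'' means choosing a component of the normalisation of $\pi^{-1}(\theta H)\setminus X$ dominating $\theta H$ and parametrising \emph{it} by $\Gm$ — which is possible because it is (the smooth locus of) a rational curve mapping finitely to $\theta H\cong\Gm$, so its normalisation is $\Gm$ or an open subset thereof, and after removing finitely many points we get the required $\rho_H:\Gm\to W$ landing off $X$. Once these are in place, the quantitative input of \prettyref{thm:vojta} — \emph{bounded degree} of the finitely many parametrising maps $\psi$ — does the rest, since bounded degree is exactly what is incompatible with the coset $\theta H$ ranging over an infinite family not confined to finitely many subgroups.
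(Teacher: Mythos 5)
Your reduction to \prettyref{thm:vojta} breaks down at the central step, namely the passage from ``$\pi^{-1}(\theta H)\setminus X$ is reducible'' to ``there is a regular map $\rho_H:\Gm\to W$ lying over $\theta H$''. Even in the essential case $\dim H=1$, a component $W'$ of $\pi^{-1}(\theta H)$ is a curve mapping \emph{finitely}, of some degree $e'$ with $1\le e'<e$, onto $\theta H\cong\Gm$; it is a ramified cover of $\Gm$, and there is no reason whatsoever for it to be rational, let alone isomorphic to $\Gm$ (its normalisation can have arbitrary genus, and even when rational it is $\Gm$ minus further points, since the fibres over $0$ and $\infty$ may contain several places). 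Your justification ``it is the smooth locus of a rational curve mapping finitely to $\theta H$, so its normalisation is $\Gm$'' is simply false, and without it no integral point $\rho_H$ is produced: reducibility of the fibre is a statement about factorisation of $f(\theta_1x^{n_1},\dots,\theta_lx^{n_l},y)$ over $\C(x)$ (intransitivity of monodromy), which is far weaker than the existence of a $\Gm$-section. Consequently \prettyref{thm:vojta} has nothing to apply to, and the rest of the argument (bounded degree of the $\psi$'s forcing $H$ into a finite list) does not get off the ground. The handling of the translate $\theta$ via torsion points and of higher-dimensional $H$ is also left vague, but these are secondary to the main gap.

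For comparison, the paper does not route through \prettyref{thm:vojta} at all. It reduces to $W$ a hypersurface $f(t_1,\dots,t_l,y)=0$ and applies \prettyref{thm:main} (in the guise of the factorisation statement) directly to the \emph{coefficients} of a putative factorisation $f(\theta_1x^{n_1},\dots,\theta_lx^{n_l},y)=g(x,y)h(x,y)$, with the $\theta_i$ absorbed uniformly as coefficients. As in the proof of \prettyref{thm:main-poly}, all such factorisations are parametrised by finitely many linear systems (I) on the exponents and varieties (II) on the coefficients, yielding an identity $f(\theta_1\mu_1,\dots,\theta_l\mu_l,y)=\tilde g\,\tilde h$ with $\mu_i$ monomials in $z_1,\dots,z_s$. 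The hypothesis that $[e]^{*}W$ is irreducible is then used exactly here: if the $\mu_i$ were multiplicatively independent, the identity would make some isogeny pullback of $W$ reducible, hence $[e]^{*}W$ reducible (Prop.\ 2.1 of \cite{Zannier2010}). So the $\mu_i$ satisfy a bounded multiplicative relation, which defines one of finitely many proper subgroups $E$; their union is $\mathcal{E}$, and reducibility over a higher-dimensional coset $\theta H$ is transferred to $1$-parameter subtori of $H$ by specialisation. If you want to salvage your approach you would need an argument working with intransitive monodromy rather than sections; as written, the proposal proves a different (and vacuous in general) implication.
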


Note that if $\pi$ is already finite onto its image, then $X$ may be
omitted from the statement. However, as pointed out by an anonymous
referee, to whom we are grateful for the correction, in the general
case the subvariety $X$ must be included in the statement; for
instance, if $\pi : W \to \Gm^2$ is the blow-up of $\Gm^2$ at a point,
then the preimage of a coset passing through the point always contains
the exceptional divisor.  The hypothesis of irreducibility of the
pullback is also a necessary condition. \footnote{For instance, when
  $\pi$ is an isogeny of $\Gm^{l}$, the cover becomes reducible above
  every subgroup $\pi(H)$, for any torus $H$ not containing the kernel
  $K$ of $\pi$, since $\pi^{-1}(\pi(H))=HK$.}

As for \prettyref{thm:main-poly}, the set $\mathcal{E}$ can be given a
complete algorithmic description, which is rather uniform in the
data. Consider the following particular case. Suppose that the variety
$W$ can be represented as the hypersurface $f(t_1,\ldots,t_l,y)=0$,
with $\pi$ given by the projection on the first $l$
coordinates. Assume moreover that $f$ is a (Laurent) polynomial in the
$t_i$'s and monic in $y$. Under these assumptions, the map $\pi$ is
finite, and the conclusion of \prettyref{thm:main-poly} gives the
following strengthening.

\begin{namedthm}[Addendum to \prettyref{thm:bertini}]
  If $W$ is the hypersurface defined by $f(t_1, \dots, t_l, y) = 0$,
  where $f$ is a Laurent polynomial in $t_1, \dots, t_l$ and monic in
  $y$, and $\pi : W \to \Gm^{l}$ is the projection onto the first $l$
  coordinates, then $X = \emptyset$ and the set $\mathcal{E}$ may be
  chosen dependently only on $\deg(f)$.
\end{namedthm}

As an application, we immediately obtain the following corollary, in
which for a given integer $d>1$ we let $K_{d}(x)$ denote the
\emph{Kronecker substitution} $K_{d}(x)=(x,x^{d},\ldots,x^{d^{l-1}})$.
\begin{cor}
  \label{cor:kronecker}
  Let $f(t_{1},\ldots,t_{l},y)$ be a complex polynomial of degree
  $e>0$ in $y$ and such that $f(t_{1}^{e},\ldots,t_{l}^{e},y)$ is
  irreducible over $\C(t_{1},\ldots,t_{l})$.  Then $f(K_{d}(x),y)$ is
  irreducible over $\C(x)$ for all integers $d$ large enough in terms
  of $\deg(f)$.
\end{cor}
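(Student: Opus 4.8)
The plan is to deduce this from the Addendum to \prettyref{thm:bertini}. First I would reduce to the case in which $f$ is monic in $y$. Write $f = a(t_1,\dots,t_l)\,y^{e} + (\text{terms of lower degree in }y)$ with $a\ne 0$. The substitution $y = z/a$ turns $f$ into $\tilde f(t_1,\dots,t_l,z) := a^{e-1} f(t_1,\dots,t_l,z/a) = z^{e} + (\text{terms of lower degree in }z)$, which is an honest polynomial, monic of degree $e$ in $z$, and has $\deg\tilde f$ bounded in terms of $\deg f$. Since $y=z/a$ is an invertible $\C(t_1,\dots,t_l)$-linear change of the variable $y$ — and the same holds after applying the isogeny $t_i\mapsto t_i^{e}$ or the substitution $t_i\mapsto x^{d^{i-1}}$ — irreducibility of $f$ over the relevant field is equivalent to that of $\tilde f$. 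In particular $\tilde f(t_1^{e},\dots,t_l^{e},z)$ is irreducible over $\C(t_1,\dots,t_l)$, and it suffices to show $\tilde f(K_d(x),z)$ is irreducible over $\C(x)$ for all large $d$. So from now on assume $f$ monic in $y$.

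Next, let $W=\{f=0\}\subseteq\Gm^{l}\times\A^{1}$ and let $\pi\colon W\to\Gm^{l}$ be the projection; this is finite of degree $e$, and $[e]^{*}W=\{f(t_1^{e},\dots,t_l^{e},y)=0\}$ is irreducible by hypothesis (which in particular forces $W$ irreducible). The Addendum to \prettyref{thm:bertini} then yields a finite union $\mathcal E$ of proper algebraic subgroups of $\Gm^{l}$, \emph{depending only on $\deg f$}, such that $\pi^{-1}(H)$ is irreducible as a variety for every connected algebraic subgroup $H\not\subseteq\mathcal E$. Now $x\mapsto K_d(x)$ is an isomorphism from $\Gm$ onto the connected $1$-dimensional subgroup $H_d\subseteq\Gm^{l}$ with cocharacter $v_d=(1,d,d^{2},\dots,d^{l-1})$, which is primitive since its first entry is $1$; under this isomorphism $\pi^{-1}(H_d)$ is identified with the affine curve $\{f(K_d(x),z)=0\}\subseteq\Gm\times\A^{1}$.

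It then remains to verify, for all $d$ large in terms of $\deg f$, that (a) $H_d\not\subseteq\mathcal E$ and (b) $f(K_d(x),z)$ is squarefree in $z$ over $\C(x)$; given both, irreducibility of the curve $\pi^{-1}(H_d)$ together with squarefreeness of its monic defining polynomial forces $f(K_d(x),z)$ to be irreducible over $\C(x)$, as wanted. For (a): since $H_d$ is connected, $H_d\subseteq\mathcal E$ would place $H_d$ inside the identity component of one of the finitely many proper subgroups whose union is $\mathcal E$, hence inside $\ker\chi$ for some nonzero character $\chi=(\chi_1,\dots,\chi_l)\in\Z^{l}$, i.e.\ $\sum_{i}\chi_i d^{i-1}=0$; thus $d$ is a root of one of finitely many nonzero integer polynomials of degree $<l$ whose coefficients are bounded in terms of $\mathcal E$, hence in terms of $\deg f$, so the Cauchy bound confines such $d$ to a finite set bounded in terms of $\deg f$. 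For (b): $D:=\operatorname{disc}_z f\in\C[t_1,\dots,t_l]$ is nonzero (as $f$, being irreducible in $\C(t_1,\dots,t_l)[z]$, is squarefree in $z$) with support contained in a box of size bounded in terms of $\deg f$, and $\operatorname{disc}_z f(K_d(x),z)=D(K_d(x))$ since $f$ is monic; writing $D=\sum_{\alpha}c_\alpha t^{\alpha}$, this can vanish identically only if $\langle\alpha-\beta,v_d\rangle=0$ for some $\alpha\ne\beta$ in $\operatorname{supp}(D)$, again forcing $d$ to be a root of a nonzero integer polynomial of degree $<l$ with coefficients bounded in terms of $\deg f$. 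Choosing $d$ beyond all these finitely many exceptional values finishes the proof.

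The real difficulty is entirely contained in \prettyref{thm:bertini} and, crucially, in the Addendum's assertion that $\mathcal E$ depends on $\deg f$ alone; granting these, the corollary is essentially bookkeeping. The one genuinely delicate point is translating the discrete conditions ``$H_d\not\subseteq\mathcal E$'' and ``$H_d$ not contained in the branch locus'' into $d$ avoiding the roots of an explicit finite family of integer polynomials of degree $<l$, together with the uniformity check that the coefficients of those polynomials are controlled by $\deg f$ and not by the coefficients of $f$. I expect the step most easily overlooked to be (b): without squarefreeness one obtains irreducibility of the variety $\pi^{-1}(H_d)$ but only of a \emph{power} of an irreducible polynomial, not of $f(K_d(x),y)$ itself in $\C(x)[y]$.
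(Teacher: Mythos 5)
Your proof is correct and follows essentially the same route as the paper, which presents the corollary as an immediate consequence of the Addendum to \prettyref{thm:bertini}: after the standard reduction to $f$ monic in $y$, the Kronecker substitution parametrises a connected one-dimensional subgroup of $\Gm^{l}$ which escapes the uniform exceptional set $\mathcal{E}$ once $d$ is large in terms of $\deg(f)$. Your discriminant step correctly supplies the one point the black-box statement leaves implicit (passing from irreducibility of the fibre as a variety to irreducibility of the polynomial $f(K_{d}(x),y)$ over $\C(x)$, which in the paper is invisible because the proof of \prettyref{thm:bertini} works directly with monic factorisations); the only detail worth adding is that in the monic reduction one should also note $a(K_{d}(x))\not\equiv 0$ for $d>\deg(f)$, by the same Kronecker-injectivity observation you already use in (b).
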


This had been obtained in \cite{Zannier2010} (with a completely
different proof), however without this uniformity, which was left as
an open question.

\subsection{An application to composite rational functions}
\label{sub:ratio}

One may propose an analogue for rational functions of the already
mentioned conjectures of Erd\H{o}s and subsequent ones by
Schinzel. Namely, let $f(x)$ be a rational function and suppose that
for a rational function $g(x)$, the composition $f(g(x))$ may be
written as a ratio of two polynomials (not necessarily coprime) with
at most $l$ terms.  \emph{Is there a $B=B(f,l)$ such that $g(x)$ may
  be represented as ratio of polynomials with at most $B$ terms?} The
present methods allow a positive solution of this problem as well, as
follows.

\begin{thm}\label{thm:ratfunc1}
  If $f,g\in \C(x) \setminus \C$ are such that the composition
  $f(g(x))$ can be written as the ratio $P(x)/Q(x)$, where
  $P,Q\in \C[x]$ have altogether at most $l$ terms, then there exist
  polynomials $p,q\in \C[x]$ with at most $B_{2}=B_{2}(l)$ terms such
  that $g(x)=p(x)/q(x)$.
\end{thm}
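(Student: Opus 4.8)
The plan is to reduce the rational-function statement to the polynomial case already covered by \prettyref{thm:main-poly}, via the toric dictionary introduced in \S\ref{sub:main}. First I would homogenise: write $f(g(x)) = P(x)/Q(x)$ with $P,Q$ having at most $l$ terms in total, say with exponents supported among $m_1 < \dots < m_l$ (allowing negative exponents, i.e.\ thinking of $P,Q$ as Laurent polynomials after clearing). After dividing through by a monomial we may assume $m_1 = 0$. The key observation is that a Laurent polynomial in $x$ with exponents among $\{m_1,\dots,m_l\}$ is the pullback along the monomial map $x \mapsto (x^{n_1},\dots,x^{n_l})$ of a fixed Laurent polynomial on $\Gm^l$, once we fix the exponent vector $(n_1,\dots,n_l)$; so $P(x)/Q(x)$ is the restriction to a $1$-parameter subgroup of a rational function $F(t_1,\dots,t_l)$ on $\Gm^l$ of degree bounded in terms of $l$ alone. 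Thus $y = g(x)$ satisfies $f(y) = F(x^{n_1},\dots,x^{n_l})$, and the problem becomes: bound the "number of terms" complexity of a solution $g$ to a bounded-degree algebraic equation over the torus, where $f$ is an arbitrary fixed (non-monic) rational function.

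The second step is to clear the denominator of $f$. Writing $f = A/B$ with $A,B \in \C[y]$ coprime of degree $\le \deg(f)$, the equation $f(g(x)) = F(x^{n_1},\dots,x^{n_l})$ becomes $A(g(x)) = F(x^{n_1},\dots,x^{n_l}) \, B(g(x))$, i.e.\ a polynomial relation $\tilde f(x^{n_1},\dots,x^{n_l},g(x)) = 0$ where $\tilde f(t_1,\dots,t_l,y)$ has degree bounded in terms of $\deg(f)$ and $l$. This $\tilde f$ is \emph{not} monic in $y$ in general — the leading coefficient in $y$ is the numerator of $F$ times the leading coefficient of $B$ minus (contributions from $A$) — so I cannot invoke \prettyref{thm:main-poly} directly. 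Here is exactly where the promised rational-function version of the main theorem (\prettyref{thm:main}, alluded to in \S\ref{sub:main}, whose conclusion is that $g$ is a ratio of two fewnomials, not necessarily coprime) must be used instead: applying it to $\tilde f$ yields $g(x) = p(x)/q(x)$ with $p,q$ having at most $B_2'(\deg f, l)$ terms. Finally, since $\deg(f)$ is itself controlled — $f(g(x))$ has at most $l$ terms forces $\deg(f) \le \deg(f(g(x))) \le m_l$, but more usefully the number of distinct values and the degree of $f$ restricted along a fewnomial image is bounded purely in terms of $l$ by an elementary valuation-theoretic argument at $0$ and $\infty$ — the dependence on $\deg(f)$ collapses to a dependence on $l$ alone, giving $B_2 = B_2(l)$.

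The main obstacle I expect is the last point: showing that the hypothesis "$f(g(x))$ is a ratio of polynomials with $\le l$ terms in total" forces $\deg(f)$ (or rather the relevant complexity of $f$) to be bounded in terms of $l$, so that the bound is uniform in $f$ and not merely in $\deg(f)$. One has to argue that if $f$ has large degree, then either $g$ is very special (e.g.\ $f\circ g$ factors through a low-degree subcover, reducing $\deg f$ effectively) or $f(g(x))$ is forced to have many terms. I would handle this by looking at the factorisation of the divisor of $f\circ g$ on $\P^1$: the points above $0$ and $\infty$ come from zeros and poles of $g$ and from $f^{-1}(0), f^{-1}(\infty)$, and a bound on the number of terms of $f(g(x))$ bounds, via the $\{0,\infty\}$-unit structure of $\C(x)$ together with a Mason–Stothers / $S$-unit argument, the number of such points — hence the ramification data of $f$, hence (after passing to the primitive part, i.e.\ writing $f = f_0 \circ h$ with $f_0$ indecomposable appropriately) the degree of the "essential" part of $f$. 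This is the only genuinely new ingredient beyond \prettyref{thm:main}; the rest is bookkeeping with the torus dictionary. A subtle point to be careful about is that $p,q$ need not be coprime — and indeed should not be expected to be, by the cyclotomic example $g(x) = (x^n-1)/(x-1)$ — so at no stage may one cancel common factors, and the statement of \prettyref{thm:main} must be quoted in exactly its non-coprime form.
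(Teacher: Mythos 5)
Your reduction is the same as the paper's: write the outer function as $A/B$, clear denominators to get the (non-monic) relation $A(g(x))\,\tilde Q - B(g(x))\,\tilde P = 0$, view $\tilde P,\tilde Q$ as pullbacks of linear forms $p_1t_1+\dots+p_lt_l$, $q_1t_1+\dots+q_lt_l$ under $t_i\mapsto x^{n_i}$, and apply \prettyref{thm:main} in its non-coprime form; this correctly yields $g=p/q$ with at most $B(\deg f,l)$ terms, and you rightly insist on not cancelling common factors.

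The gap is in your final step, the collapse of the dependence on $\deg f$ to a dependence on $l$ alone. The paper does not prove this ab initio: it quotes the main theorem of \cite{Fuchs2012}, which gives $\deg f\le 2016\cdot 5^{l}$ \emph{unless} one is in an explicitly described exceptional situation, in which case the conclusion of \prettyref{thm:ratfunc1} is trivially true; only after this degree bound is the polynomial $A(y)(q_1t_1+\cdots+q_lt_l)-B(y)(p_1t_1+\cdots+p_lt_l)$ formed and \prettyref{thm:main} applied. Your proposed substitute argument does not work as described. First, no unconditional bound $\deg f\le B(l)$ can exist (take $g(x)=x$, or any fewnomial ratio, and $f$ arbitrary), so any proof must isolate and dispose of an exceptional family exactly as in \cite{Fuchs2012}; your dichotomy is only gestured at. Second, the concrete mechanism you propose --- that the $\le l$ terms of $P,Q$ bound, via a Mason--Stothers/$S$-unit argument, the number of points of $\P^1$ lying over $0$ and $\infty$ for $f\circ g$, hence the ramification data and degree of the ``essential part'' of $f$ --- fails at the first step: a fewnomial can have arbitrarily many distinct zeros ($x^n-1$ has two terms), so the support of the divisor of $f(g(x))$ is not bounded in terms of $l$, and the passage ``hence the ramification data of $f$, hence $\deg f$'' is unsupported. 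This uniformity statement is the main theorem of a separate paper, proved there by a delicate induction with Brownawell--Masser-type inequalities; within the present paper the correct move is simply to invoke it (as the remark after the statement of \prettyref{thm:ratfunc1} indicates), and your proof is complete only once you either cite that result or supply an argument of comparable depth, including the treatment of its exceptional case.
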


Again, we stress that the pairs $P, Q$ and $p, q$ are not necessarily
coprime. We also remark that we actually have full uniformity here in
the rational function $f$, as the number $B_{2}$ only depends on $l$
and not on $\deg(f)$ (this dependency can be removed thanks to a
previous theorem proved by the first and last authors
\cite{Fuchs2012}).

\subsection{Non-standard polynomials}

The notion of fewnomial and our main theorems can be translated
naturally in the language of \textsc{A.\ Robinson}'s non-standard
analysis. We refer the reader to \cite{Fried2008} for an introduction
to the subject.

Here we just recall that in non-standard analysis one has a map ${}^*$
which sends the standard objects, such as $\N$ or $\R$, to their
non-standard counterparts, in a way that preserves all first-order
formulas. The easiest example of (non-trivial) map ${}^*$ is the one
that sends any set $S$ into the set of sequences with values in $S$
(i.e., $S^\N$) modulo the equivalence relation defined by a fixed
non-principal ultrafilter on $\N$ (i.e., $(a_n)\sim(b_n)$ if
$\{n\ :\ a_n=b_n\}$ is in the ultrafilter). This introduces new,
non-standard elements; for instance, the non-standard ${}^*\N$
contains an element $\omega$, the equivalence class of the sequence
$(n)_{n\in\N}$, which is different from any standard natural number.

Concerning our context, we note that the non-standard ${}^*(\C[x])$
contains `polynomials with infinitely many terms', such as
\[
  1 + x + x^2 + \dots + x^{\omega-1} + x^\omega.
\]
In fact, this is exactly the equivalence class of the sequence
$(1+x+\dots+x^n)_{n\in\N}$.

We now define the \emph{ring $\mathcal{F}$ of fewnomials} in
${}^*(\C[x])$ to be the subring of polynomials whose number of terms
is actually finite:
\[
  \mathcal{F} := \{a_1x^{n_1} + \dots + a_lx^{n_l}\ :\ l\in\N,\
  a_i\in{}^*\C,\ n_i\in{}^*\N\}.
\]
In this language, statements about fewnomials become quite compact. As
an instance of this phrasing, the Erd\H{o}s-R\'enyi conjecture proved
by Schinzel becomes: \emph{if $g^2\in\mathcal{F}$ for some
  $g\in{}^*(\C[x])$, then $g\in\mathcal{F}$}. Likewise,
\prettyref{thm:main-simple} translates to the following quite short
statement:

\begin{namedthm}[Theorem~${}^*$\ref{thm:main-simple}]
  The ring $\mathcal{F}$ is integrally closed in ${}^*(\C(x))$.
\end{namedthm}

This statement was proposed by \textsc{A.\ Fornasiero} before the
results of this paper, together with its following immediate
corollary (which is, in turn, a non-standard translation of \prettyref{thm:main}, stated in the following section).

\begin{cor*}[Theorem~${}^{*}$\ref{thm:main}]
  The fraction field of $\mathcal{F}$ is relatively algebraically
  closed in ${}^{*}(\C(x))$.
\end{cor*}

The latter conclusion is another example of the aforementioned
behavior of rational functions, and indeed it corresponds to dropping
the assumption that the polynomial is monic in
\prettyref{thm:main-simple}.

It is rather easy to see that the Theorems ${}^*$\ref{thm:main-simple}
and \ref{thm:main-simple} are indeed equivalent. For example, assume
Theorem~${}^*$\ref{thm:main-simple} and suppose by contradiction that
\prettyref{thm:main-simple} is false. Then for some $d,l\in\N$ there
should be a sequence $(g_n(x))$ of polynomials whose number of terms
grows to infinity, while they also satisfy
\[ f_n(x,g_n(x)) = 0 \] where $(f_n)$ is a sequence of polynomials
with at most $l$ terms, of degree at most $d$ and monic in the last
variable.

But then the equivalence classes ${}^*g$ and ${}^*f$ of the above
sequences satisfy
\[
  {}^*f(x,{}^*g(x))=0,
\]
which means that ${}^*g(x)$ is integral over $\mathcal{F}$, while it
lies in
${}^*(\C[x])$ and not in $\mathcal{F}$, a contradiction.

\medskip

Although there are details to be worked out, we believe that also our
proof of \prettyref{thm:main-simple} can be translated rather
naturally to a shorter argument in the non-standard language; this is
being investigated and may appear in a future paper. On the one hand,
we would loose effectivity, but on the other, we may be able to avoid
the use of the resolution of singularities and directly use the
construction of Puiseux series.

The main potential simplification comes from the fact that many
notions, which in the proof depend on carefully chosen parameters,
become absolute. For example, the notion of being ``small'' with
respect to a ``large'' number, which in our proof depends on a
parameter $\varepsilon$ to be chosen carefully, translates to being
\emph{infinitesimal} with respect to the second number.

\subsection{Fewnomials and Unlikely Intersections}

This instance does not directly use results of the present paper, but
we still discuss it because it is far from being unrelated.

As already mentioned at the end of \prettyref{sub:main}, several
results here contain a dichotomy \emph{lacunary polynomials
  $\leftrightarrow$ lacunary rational functions}, where by the latter
terminology we mean rational functions which may be represented as a
ratio of two fewnomials, possibly non-coprime, as in
\prettyref{thm:ratfunc1}. Recall the standard example
$(x^{n}-1)/(x-1)$, which shows that a \textit{lacunary rational
  function} which is a polynomial is not necessarily a fewnomial.
This gives rise to the following problem, also posed independently by
\textsc{M.\ Zieve}.

Suppose that a rational function can be represented as
$r(x)=g(x^{n_{1}},\ldots,x^{n_{l}})/h(x^{n_{1}},\ldots,x^{n_{l}})$,
where the integers $n_{i}$ vary, while $g,h$ are \textit{fixed}
coprime polynomials in $\C[t_{1},\ldots,t_{l}]$. (In accordance with
the viewpoint illustrated above, we are viewing $r(x)$ as the
restriction of a fixed rational function $g/h$ on $\Gm^{l}$ to a
$1$-dimensional algebraic subgroup which may vary.) One may ask:

\begin{question}
  For which $1$-dimensional algebraic subgroups does $r(x)$ become a
  (Laurent) polynomial?
\end{question}

For instance, the above example comes from $g=t_{2}-1$, $h=t_{1}-1$ on
$\Gm^{2}$; in this case it is easy to check that the only
$1$-dimensional algebraic subgroups which make $g/h$ a (Laurent)
polynomial are given by $t_{2}=t_{1}^{n}$ for integer $n$ (as in the
example).

Therefore, in particular, we have two coprime polynomials $g,h$ such
that they become non-coprime (or such that $h$ becomes invertible)
along the $1$-dimensional subtorus of $\Gm^{l}$ parametrized by
$t_{i}\mapsto x^{n_{i}}$.  This kind of problem also appeared in a
conjecture of Schinzel, which was later recognized as a special case
of the more recent Zilber-Pink conjecture in the realm of the
so-called \textit{Unlikely Intersections}.  See \cite{Zannier2012} for
a discussion of this topic, especially Ch.\ 2. This conjecture of
Schinzel was confirmed by \textsc{E.\ Bombieri} and the third author
(see \cite[Appendix]{Schinzel2000}), and was later refined with other
methods, in collaboration also with \textsc{D.\ Masser}, in
\cite[Thm.\ 1.5]{Bombieri2007}, in a work proving the Zilber-Pink
conjecture for intersections with $1$-dimensional subgroups.

These last results give an answer to the above question, showing that
the relevant algebraic subgroups are contained in a \emph{finite union
  $\mathcal{E}=\mathcal{E}_{g,h}$ of proper algebraic subgroups of
  $\Gm^{l}$}. Given this, one may restrict to the subgroups in
$\mathcal{E}$ and continue by induction to write down all the
possibilities: it turns out that \emph{the relevant $1$-dimensional
  algebraic subgroups are precisely those contained in a certain
  finite union $\mathcal{E}'$ of proper algebraic subgroups on which
  $g/h$ becomes regular}.

\medskip

It is to be remarked that the more general question in which
\textit{$1$-dimensional algebraic subgroups} are replaced by
\textit{$1$-dimensional algebraic cosets} does not admit a similar
solution. This corresponds to the ratio
$g(\theta_{1}x^{n_{1}},\ldots,\theta_{l}x^{n_{l}})/h(\theta_{1}x^{n_{1}},\ldots,\theta_{l}x^{n_{l}})$
being a polynomial, for integers $n_{i}$ and nonzero constants
$\theta_{i}$.  We do not know of any method able to deal with such a
question in full generality.

\medskip

Another connection to integral points was pointed out to us, and we
are grateful for it, by one of the referees.  Starting from the
rational function
$r(t_1,\ldots,t_l) = g(t_1,\ldots,t_l) / h(t_1,\ldots,t_l)$, one can
blow-up the codimension-two subvariety of $\Gm^l$ defined by the
simultaneous vanishing of $g$ and $h$, and remove the strict transform
of the subvariety of $\Gm^{l}$ defined by $h = 0$. Let $W$ be the
resulting variety. Then the $1$-dimensional (translates of) algebraic
subgroups which are solutions to Question 1.8 correspond to regular
maps $\Gm \rightarrow W$. With this interpretation a solution for the
problem of translates can possibly be given for $l = 2$ (the case of
surfaces) under some normal-crossings conditions which will depend on
$h$ and are generically satisfied.

\subsection{Proof methods and quantitative issues.}

The strategy of the proofs here follows only in part the pattern of
\cite{Zannier2008}; this shall be outlined in more detail in
\prettyref{sec:strategy} (before the formal arguments). The main
technical issue is finding an appropriate way of expanding $g(x)$ as a
kind of multi-variate Puiseux series. This is done here by using first
the theory of resolution of singularities to reduce to a rather
regular case in which one can use multi-variate analytic
expansions. An earlier version of the proofs involved a different,
more complicated construction of certain Puiseux-type expansions, but
no use of resolution of singularities. The approach was dropped in
favor of the present one for the sake of simplicity, but it may be of
independent interest, and it can be still found in an earlier draft of
this paper \cite{Fuchs2014v1}.

A byproduct is a completely effective output of the proofs: \emph{one
  can obtain effective estimates for the involved quantities, and
  effective parametrizations} (provided of course one deals with cases
in which the fields and equations which occur are finitely
presented). However, we do not give here explicit bounds, which in any
case would have the shape of highly iterated
exponentials. \footnote{In the original cases of the R\'enyi-Erd\H{o}s
  conjecture, doubly exponential bounds had been obtained by Schinzel
  \cite{Schinzel1987}, reduced later to single exponential by Schinzel
  and the third author \cite{Schinzel2009}.}

\medskip

\subsection*{Acknowledgments}
We express our gratitude to A.\ Fornasiero for raising the question in
the non-standard setting, thus renewing interest in this problem, and
to D.\ Ghioca and T.\ Scanlon for informing us about their conjecture
and its link with the problems discussed here. We also wish to thank
the anonymous referees for the very detailed reading and the various
important comments, corrections and further pointers to the
literature.

\section{Variations and reductions}
\label{sec:variations}

\subsection{Variations of \prettyref{thm:main-poly}}

The following three statements are variations regarding irreducible
factors and the dichotomy rational functions $\leftrightarrow$
polynomials mentioned in the introduction.

The first one concerns factorizations.

\begin{thm}
  \label{thm:main-poly-factor}
  If $f\in\C[t_{1},\dots,t_{l},y]$ is monic in $y$ and of degree at
  most $d$ in each variable, if $n_{1},\dots,n_{l}$ are natural
  numbers, and if $g,h\in\C[x,y]$ are polynomials monic in $y$ such
  that
  \begin{equation}
    g(x,y)h(x,y)=f(x^{n_{1}},\dots,x^{n_{l}},y)
  \end{equation}
  then each coefficient of $g$ (as a polynomial in $y$) has at most
  $B_{3}=B_{3}(d,l)$ terms.
\end{thm}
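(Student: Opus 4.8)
The plan is to deduce \prettyref{thm:main-poly-factor} from \prettyref{thm:main-poly} by a standard elimination‐theoretic argument. The key point is that the coefficients of a monic factor of a monic polynomial are \emph{integral} over the ring generated by the coefficients of the ambient polynomial, with degrees controlled only in terms of the ambient degree; this lets us produce, for each coefficient of $g$, a new ``fewnomial equation'' of the exact shape to which \prettyref{thm:main-poly} applies.

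Concretely, write $F(x,y):=f(x^{n_1},\dots,x^{n_l},y)=y^{e}+c_{e-1}(x)y^{e-1}+\dots+c_0(x)$, where $e=\deg_y f\le d$ and $c_j(x)=f_j(x^{n_1},\dots,x^{n_l})$ with $f_j\in\C[t_1,\dots,t_l]$ the coefficient of $y^{j}$ in $f$, so $\deg_{t_i}f_j\le d$. We may assume $e\ge 1$, since otherwise $f$, $g$, $h$ are all equal to $1$ and there is nothing to prove. As $g$ and $h$ are monic in $y$ and $gh=F$, passing to a splitting field of $F$ over $\C(x)$ we may write $F=\prod_{i=1}^{e}(y-y_i)$ and $g=\prod_{i\in S}(y-y_i)$ for a sub(multi)set $S$ of the roots with $|S|=m:=\deg_y g$; hence the coefficient of $y^{m-k}$ in $g$ equals $(-1)^{k}e_k(\{y_i:i\in S\})$, an elementary symmetric function of a subset of the roots of $F$.

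Next I would run the universal symmetric‑function formalism in the generic setting (independent transcendentals $c_0,\dots,c_{e-1}$, $F_{\mathrm{univ}}(y)=y^{e}+c_{e-1}y^{e-1}+\dots+c_0$) and then specialise $c_j\mapsto c_j(x)$. For fixed $m$ and $k$ the multiset $\{\,e_k(\{y_i:i\in S'\}):|S'|=m\,\}$ is permuted by the symmetric group $S_e$, so the polynomial
\[
  \Phi_{m,k}(Z):=\prod_{|S'|=m}\bigl(Z-e_k(\{y_i:i\in S'\})\bigr)
\]
has coefficients that are symmetric in $y_1,\dots,y_e$ with integer coefficients, hence (by the fundamental theorem of symmetric functions) $\Z$‑polynomials in $e_1(y),\dots,e_e(y)=\pm c_{e-1},\dots,\pm c_0$. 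It is monic in $Z$ of degree $\binom{e}{m}\le 2^{d}$, and a weight count (give $c_{e-i}$ weight $i$) shows its coefficients have degree at most $\binom{e}{m}\cdot e\le 2^{d}d$ in the $c_j$. Specialising $c_j\mapsto c_j(x)=f_j(x^{n_1},\dots,x^{n_l})$ and adjusting an overall sign, we conclude that every coefficient $a(x)$ of $g$ satisfies
\[
  \widehat\Phi(x^{n_1},\dots,x^{n_l},a(x))=0,
\]
where $\widehat\Phi\in\C[t_1,\dots,t_l,Z]$ is monic in $Z$, non‑constant, of degree at most $2^{d}$ in $Z$ and of degree bounded in terms of $d$ in each $t_i$ (at most $2^{d}d^{2}$, say).

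Finally, \prettyref{thm:main-poly} applied to $\widehat\Phi$ — with $d$ replaced by the new bound $D^{*}=D^{*}(d)$, the same $l$, and the same exponents $n_i$ — yields that $a(x)$ has at most $B_1(D^{*},l)$ terms, so we may take $B_3(d,l):=B_1(D^{*}(d),l)$. Given \prettyref{thm:main-poly}, this argument is entirely elementary; the only things needing care are the bookkeeping that $\widehat\Phi$ is genuinely monic in $Z$ and has degree in each $t_i$ bounded in terms of $d$ alone — so that \prettyref{thm:main-poly} applies verbatim — and the weight estimate for the coefficients of $\Phi_{m,k}$. In other words there is no new hard input here: all the difficulty is already packaged inside \prettyref{thm:main-poly}, and the present statement is just the observation that it survives passage to monic factors.
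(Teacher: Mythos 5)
Your proposal is correct and is essentially the paper's own deduction: the paper likewise writes each coefficient of $g$ as an elementary symmetric function of a subset of the roots, forms the resolvent $\prod_{|S'|=m}\bigl(Z-e_k(\{y_i:i\in S'\})\bigr)$ whose coefficients are polynomials (of degree bounded in terms of $d$) in the coefficients of $f$, and applies \prettyref{thm:main-poly} to the resulting monic fewnomial equation. The only difference is bookkeeping: the paper quotes the cruder bound $d^2$ for the degrees and hence takes $B_3=B_1(d^2,l)$, while your weight count gives $B_1(D^*(d),l)$ with a slightly larger $D^*$, which is immaterial.
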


(By symmetry, a similar conclusion holds automatically for the
coefficients of $h$.)

Note that we recover \prettyref{thm:main-poly} on taking $(y-g(x))$ as
the first factor.  The converse deduction is also not difficult but
shall be explained later. The other variations concern rational
functions.

\begin{warn}
  We stress again the point that whenever we write a rational function
  (even when it is a polynomial) as quotient of two polynomials, we
  are not usually assuming that the numerator and denominator are
  coprime (recall the example $(x^n - 1) / (x - 1)$). This issue is
  related to not requiring that $f$ is monic in $y$, as in the
  following statements.
\end{warn}

\begin{thm}
  \label{thm:main}
  If $f\in\C[t_{1},\dots,t_{l},y] \setminus \C$ is a polynomial of
  degree at most $d$ in each variable, if $n_{1},\dots,n_{l}$ are
  integers, and if $g(x)\in\C(x)$ is such that
  \begin{equation}
    f(x^{n_{1}},\dots,x^{n_{l}},g(x))=0,\label{eq:main}
  \end{equation}
  then $g(x)$ is the ratio of two polynomials in $\C[x]$ with at most
  $B_{4}=B_{4}(d,l)$ terms.
\end{thm}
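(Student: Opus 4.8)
The plan is to deduce \prettyref{thm:main} formally from the monic case \prettyref{thm:main-poly}; as already observed in \prettyref{sub:main}, the rational-function statement follows ``straight away'' from it. (The paper itself proceeds in the reverse order, proving \prettyref{thm:main} directly and then recovering \prettyref{thm:main-poly}; for that route the reduction below is of course unavailable, and I comment on it at the end.) Write $f=\sum_{j}a_{j}(t_{1},\dots,t_{l})\,y^{j}$ and, throughout, $x^{\bn}$ for the tuple $(x^{n_{1}},\dots,x^{n_{l}})$. First I would make two harmless normalizations. (i) If some $n_{i}<0$, replace $t_{i}$ by $t_{i}^{-1}$ in $f$ and multiply the result by $t_{i}^{\deg_{t_{i}}f}$; the new polynomial still has degree $\le d$ in every variable and the same degree in $y$, while the identity $f(x^{\bn},g(x))=0$ turns into the analogous one with $n_{i}$ replaced by $-n_{i}$. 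Iterating, I may assume $n_{1},\dots,n_{l}\in\N$. (ii) Let $e$ be the degree in $y$ of $f(x^{\bn},y)$, viewed in $\C(x)[y]$; this may be strictly smaller than $\deg_{y}f$, because the leading coefficients $a_{j}(t)$ can vanish under the monomial specialization $t_{i}\mapsto x^{n_{i}}$. I may assume $e\ge 1$: if $e\le 0$ then $f(x^{\bn},y)$ is a constant in $y$, so the hypothesis is either vacuous or (if that constant is $0$) no condition on $g$ at all — a degenerate case one excludes, i.e.\ one assumes throughout that $g$ is genuinely algebraic over $\C(x)$ through $f$.

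By the choice of $e$, the polynomial $a_{e}(x^{\bn})\in\C[x]$ is nonzero, and I would clear it. Setting $b_{j}(t):=a_{j}(t)\,a_{e}(t)^{e-1-j}$ for $0\le j\le e-1$ — each of degree $\le d+(e-1)d\le d^{2}$ in every variable — and $G(x):=a_{e}(x^{\bn})\,g(x)\in\C(x)$, multiplication of the relation $\sum_{j\le e}a_{j}(x^{\bn})g^{j}=0$ by $a_{e}(x^{\bn})^{e-1}$ and regrouping yields
\[
  G^{e}+\sum_{j=0}^{e-1}b_{j}(x^{\bn})\,G^{j}=0 ,
\]
that is, $\Phi(x^{\bn},G(x))=0$, where $\Phi(t_{1},\dots,t_{l},Y):=Y^{e}+\sum_{j<e}b_{j}(t)\,Y^{j}$ is monic in $Y$, of degree $e\le d$ in $Y$, and of degree $\le d^{2}$ in each $t_{i}$. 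Since $\Phi(x^{\bn},Y)\in\C[x][Y]$ is monic in $Y$ with the rational root $G$, the function $G$ is integral over $\C[x]$, which is integrally closed in $\C(x)$; hence $G\in\C[x]$. Now $\Phi$ meets all the hypotheses of \prettyref{thm:main-poly} with $d$ replaced by $d^{2}$, so $G$ has at most $B_{1}(d^{2},l)$ terms; and $a_{e}(x^{\bn})$, the image under $t_{i}\mapsto x^{n_{i}}$ of a polynomial with at most $(d+1)^{l}$ monomials, has at most $(d+1)^{l}$ terms. Therefore $g(x)=G(x)/a_{e}(x^{\bn})$ is a quotient of two polynomials each with at most $B_{4}(d,l):=\max\bigl(B_{1}(d^{2},l),\,(d+1)^{l}\bigr)$ terms.

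Assuming \prettyref{thm:main-poly}, there is nothing genuinely hard here; the only subtle points are the two normalizations, and in particular the observation that it is the \emph{effective} $y$-degree $e$ after specialization, not $\deg_{y}f$, that must be cleared, together with the need for nonnegative exponents so that $\Phi(x^{\bn},Y)$ lands in $\C[x][Y]$. The price of this slickness is circularity with respect to the paper's own logic, where \prettyref{thm:main-poly} is instead recovered from \prettyref{thm:main}; for a direct proof of \prettyref{thm:main} the above is useless, and the real difficulty — the analytic core sketched in the introduction — reappears: one uses resolution of singularities to put the cover $f=0$ in a sufficiently regular form, expands the solution $g(x)$ as a multivariate Puiseux-type series along the $1$-parameter subgroup $t_{i}=x^{n_{i}}$, and the main obstacle there is to build and control this expansion finely enough to bound the number of terms of $g$ uniformly in the exponents $n_{i}$.
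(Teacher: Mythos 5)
Your reduction is, in itself, correct: the treatment of negative exponents, the passage to the effective $y$-degree $e$ after the specialization $t_i\mapsto x^{n_i}$, the substitution $G=a_e(x^{\bn})g$ with the multiplier $a_e^{e-1}$, and the integrality observation forcing $G\in\C[x]$ reproduce almost verbatim the paper's own reduction of \prettyref{thm:main} to the monic situation (the deduction of \prettyref{thm:main} from \prettyref{prop:main} in \prettyref{sub:reductions}), and your bookkeeping of degrees ($\le d^2$) and of the term count of $a_e(x^{\bn})$ ($\le(d+1)^l$) is fine. The genuine gap is that you then invoke \prettyref{thm:main-poly} as a black box, while in this paper \prettyref{thm:main-poly} is itself deduced from \prettyref{thm:main} (Section~\ref{sec:induction}, via the parametrisation of the exponents by systems (I)--(II) and the integrality argument over $\C[z_1^{\pm1},\dots,z_s^{\pm1}]$), and no independent proof of \prettyref{thm:main-poly} is known or offered. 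So, relative to the only available proof of \prettyref{thm:main-poly}, your argument is circular; it establishes the (easy, and explicitly noted in \prettyref{sub:main}) implication from \prettyref{thm:main-poly} to \prettyref{thm:main}, not \prettyref{thm:main} itself. This is precisely why the paper works with the weaker conclusion of \prettyref{prop:main} (a ratio of two fewnomials, not a fewnomial) as the statement to be proved directly.

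Consequently, all of the substance of the paper's proof is missing from your proposal: the reduction to regular solutions via resolution of singularities and \prettyref{lem:invertible} together with \prettyref{lem:linear-dep-exp}; the pseudo-analytic expansion $g(x)=\sum_{\bk}\tilde{\alpha}_{\bk}x^{\bk\cdot\bn}$ with algebraic-function coefficients and the degree, height and genus bounds of Section~\ref{sec:multivariate}; the $S$-unit inequality (\prettyref{lem:s-units}) yielding the dichotomy of \prettyref{prop:applied-s-units}; the analysis of the linear-dependence case in Section~\ref{sec:lin-dep}; and the double induction on $(l,p)$ that closes the argument. Your final paragraph acknowledges the circularity, but acknowledging it does not repair it: to prove \prettyref{thm:main} you must either supply this core argument or give an independent proof of \prettyref{thm:main-poly}, neither of which the proposal does.
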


\begin{thm}
  \label{thm:main-factor}
  If $f\in\C[t_{1},\dots,t_{l},y]$ is a polynomial of degree at most
  $d$ in each variable, if $n_{1},\dots,n_{l}$ are integers, and if
  $g,h\in\C(x)[y]$ are such that
  \begin{equation}
    g(x,y)h(x,y)=f(x^{n_{1}},\dots,x^{n_{l}},y)
  \end{equation}
  with $g$ monic in $y$, then each of the coefficients of $g$ (as a
  polynomial in $y$) is the ratio of polynomials in $\C[x]$ with at
  most $B_{5}=B_{5}(d,l)$ terms.
\end{thm}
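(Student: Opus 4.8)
The plan is to deduce \prettyref{thm:main-factor} from \prettyref{thm:main-poly-factor}, in which all the real content already resides; the task here is only to turn an arbitrary, not necessarily monic, $f$ into a monic one while controlling the number of terms. The point to bear in mind is that it is precisely the absence of monicity that forces the conclusion to be phrased with \emph{ratios} of fewnomials instead of fewnomials — as the cyclotomic example $f(t_1,t_2,y)=(t_2-1)-y(1-t_1)$, $n_1=1$, $n_2=n$, shows, its only root being $(x^n-1)/(1-x)=-(1+x+\dots+x^{n-1})$, a quotient of two binomials but not a fewnomial. Consequently the recurring bookkeeping is to check that the class $\mathcal{R}_B$ of rational functions expressible as a quotient of two polynomials with at most $B$ terms is preserved under the reductions used; since we never demand coprimeness, $\mathcal{R}_B\cdot\mathcal{R}_{B'}\subseteq\mathcal{R}_{BB'}$ and $\mathcal{R}_B/\mathcal{R}_{B'}\subseteq\mathcal{R}_{BB'}$, and in particular $\mathcal{R}_B$ is stable under multiplication and division by a bounded power of a polynomial with boundedly many terms.

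We may assume $f(x^{n_1},\dots,x^{n_l},y)\ne 0$, as otherwise there is nothing to prove. First I would clear negative exponents: for every index $i$ with $n_i<0$, replacing $t_i$ by $t_i^{-1}$ in $f$ and multiplying by $\prod_{n_i<0}t_i^{\,d}$ produces a genuine polynomial $\phi_0$, still of degree at most $d$ in each variable, with $\phi_0(x^{|n_1|},\dots,x^{|n_l|},y)=x^{C}f(x^{n_1},\dots,x^{n_l},y)$ for some $C\ge 0$; absorbing $x^{C}$ into $h$ leaves the coefficients of $g$ unchanged and turns the given identity into one of $\phi_0(x^{|n_1|},\dots,x^{|n_l|},y)$. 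Next I would truncate $\phi_0$ in $y$ to the $y$-degree $e'\ (\le\deg_y f\le d)$ of $\phi_0(x^{|n_1|},\dots,x^{|n_l|},y)$; this changes the substituted polynomial not at all, and the leading coefficient $\alpha=\alpha(t_1,\dots,t_l)$ of the truncation $\phi_1$ satisfies $\alpha(x^{|n_1|},\dots,x^{|n_l|})\ne 0$. Finally I would make $\phi_1$ monic in $y$ via the standard substitution $\hat\phi:=\alpha^{\,e'-1}\phi_1(t_1,\dots,t_l,y/\alpha)$, which is monic in $y$ and of degree at most $d^2$ in each of the same $l$ variables. Setting $\bar\alpha:=\alpha(x^{|n_1|},\dots,x^{|n_l|})$, a nonzero polynomial in $x$ with at most $(d+1)^l$ terms, the factorisation becomes $\hat g\,\hat h=\hat\phi(x^{|n_1|},\dots,x^{|n_l|},y)$, where $\hat g:=\bar\alpha^{\,m}g(x,y/\bar\alpha)$ is monic in $y$ of degree $m=\deg_y g$ with $y^j$-coefficient $c_j\bar\alpha^{\,m-j}$ ($c_0,\dots,c_{m-1}$ being the coefficients of $g$ we must bound), and $\hat h$ the complementary factor, necessarily monic in $y$.

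Because $\hat\phi$ is monic in $y$, the polynomial $\hat\phi(x^{|n_1|},\dots,x^{|n_l|},y)$ lies in $\C[x][y]$ and is monic, and $\hat g$ is a monic divisor of it over $\C(x)$; since $\C[x]$ is integrally closed, the coefficients of $\hat g$ (being symmetric functions of its roots, hence integral over $\C[x]$, and lying in $\C(x)$) belong to $\C[x]$, and likewise $\hat h\in\C[x][y]$. Thus \prettyref{thm:main-poly-factor}, applied to $\hat\phi,\hat g,\hat h$ with $d$ replaced by $d^2$, shows that each coefficient $c_j\bar\alpha^{\,m-j}$ of $\hat g$ has at most $B_3(d^2,l)$ terms. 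Reversing the substitution, $c_j=(c_j\bar\alpha^{\,m-j})/\bar\alpha^{\,m-j}$ is a quotient of a polynomial with at most $B_3(d^2,l)$ terms by the polynomial $\bar\alpha^{\,m-j}$, which has at most $(d+1)^{ld}$ terms since $m-j\le m\le d$. Hence each coefficient of $g$ lies in $\mathcal{R}_{B_5(d,l)}$ with $B_5(d,l):=\max\{B_3(d^2,l),(d+1)^{ld}\}$, which is the desired conclusion.

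The chain of reductions above is routine, and it is in fact essentially all there is to this theorem: the genuine difficulty is entirely contained in \prettyref{thm:main-poly-factor}. The only delicate point is the one flagged at the start — that membership in $\mathcal{R}_B$ with $B$ bounded in terms of $d,l$ really is stable under the operations used, which holds precisely because coprimeness is not required (and is exactly the reason \prettyref{thm:main-factor} cannot be phrased with fewnomials rather than ratios). One could instead attempt a direct proof, feeding the individual roots of $g$ — after a ramified base change $x\mapsto x^{N}$ with $N$ bounded in terms of $d$ — into the Puiseux-expansion and resolution-of-singularities machinery; but those roots need not be rational functions, so this route is appreciably heavier, and the reduction here avoids it.
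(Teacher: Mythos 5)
Your proof is correct, and the bookkeeping at its core (clearing negative exponents, truncating in $y$, passing to $\alpha^{e'-1}\phi_1(\cdot,y/\alpha)$, and tracking how the coefficients of the monic factor transform into $c_j\bar\alpha^{\,m-j}$, finally dividing back) is essentially the same monicization reduction the paper uses; what differs is the statement you feed it into. The paper deduces \prettyref{thm:main-factor} from \prettyref{prop:main-factor}, which it obtains from \prettyref{prop:main} (the monic proposition with the ratio-of-fewnomials conclusion) by the symmetric-function argument; in particular its route never needs \prettyref{thm:main-poly} or \prettyref{thm:main-poly-factor}, whose proofs require the extra parametrisation and integral-closure argument of Section~\ref{sec:induction}. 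You instead invoke \prettyref{thm:main-poly-factor} (the sharper polynomial statement) and then divide by $\bar\alpha^{\,m-j}$; this works, gives a comparable bound ($\max\{B_3(d^2,l),(d+1)^{ld}\}$ against the paper's $(d+1)^{dl}B_7(d^2,l)$), and buys a slightly cleaner intermediate object ($\hat g$ with genuinely fewnomial coefficients), at the price of making \prettyref{thm:main-factor} logically heavier than it needs to be. One point you should make explicit: \prettyref{thm:main-factor} is itself used inside the proof of \prettyref{prop:main-regular} (via \prettyref{prop:gx-lin-dep}) at the strictly smaller level $p<l$, so the whole development is a single induction on $l$ (cf.\ \prettyref{note:deductions-l}); your chain \prettyref{thm:main} $\Rightarrow$ \prettyref{thm:main-poly} $\Rightarrow$ \prettyref{thm:main-poly-factor} $\Rightarrow$ \prettyref{thm:main-factor} stays at a fixed $l$ (only $d$ is replaced by $d^2$), so no circularity arises, but since you route through more of the paper's statements than the original deduction does, this compatibility with the induction deserves a sentence rather than being left implicit.
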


It is easy to see that \prettyref{thm:main-poly} implies
\prettyref{thm:main}, but the converse deduction does not appear as
straightforward (we stress yet again the point that the polynomial $g$
in the conclusion of \prettyref{thm:main} is represented as a quotient
of two polynomials which need not to be coprime). In this paper, we
actually prove \prettyref{thm:main} first, and then deduce
\prettyref{thm:main-poly} (and \prettyref{thm:main-poly-factor}) via a
general integrality argument.

\begin{rem}
  In all of the above statements, we may actually allow
  $n_{1},\dots,n_{l}$ to be negative and $g(x)\in\C[x,x^{-1}]$, with a
  similar conclusion.

  We may also deduce that the fewnomials which arise can be
  parametrized with the same exponents. For instance, in
  \prettyref{thm:main-poly}, we can say that there are $N$ and
  $G\in\C[t_1,\dots,t_l]$, with $N$ and $\deg(G)$ bounded in terms of
  $d$ and $l$ only, such that $g(x^N)=G(x^{n_1},\dots,x^{n_l})$.

  For the sake of simplicity, we shall omit details about these
  further assertions.
\end{rem}

\subsection{Reductions}
\label{sub:reductions}

We are going to prove \prettyref{thm:main} first, and we can use some
standard arguments to reduce the theorem to a simpler situation. In a
moment, we shall reduce both Theorems \ref{thm:main},
\ref{thm:main-factor} about rational functions to the case where $f$
is monic in $y$, and $n_{1},\dots,n_{l}$ are non-negative. We obtain
the following statements, in which the assumption is as in
\prettyref{thm:main-poly} (or \prettyref{thm:main-poly-factor}), but
the conclusion is as in \prettyref{thm:main} (resp.\
\prettyref{thm:main-factor}).

\begin{prop}
  \label{prop:main}
  If $f\in\C[t_{1},\dots,t_{l},y] \setminus \C$ is monic in $y$ and of
  degree at most $d$ in each variable, if $n_{1},\dots,n_{l}$ are
  natural numbers, and if $g(x)\in\C(x)$ is such that
  \begin{equation}
    f(x^{n_{1}},\dots,x^{n_{l}},g(x))=0,\label{eq:prop-main}
  \end{equation}
  then $g(x)$ is the ratio of two polynomials in $\C[x]$ with at most
  $B_{6}=B_{6}(d,l)$ terms.
\end{prop}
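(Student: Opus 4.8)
The plan is to prove the Proposition by adopting the toric point of view of the introduction — viewing $g$ as the restriction of a regular function on $\Gm^{l}$ to a one-parameter subgroup — using resolution of singularities to expand that function as a multivariate Puiseux series near the toric boundary, and then running a counting argument in the spirit of \cite{Zannier2008}. \emph{Normalisations.} Since $f$ is monic in $y$ and the $n_{i}$ are natural numbers, $g$ is integral over $\C[x^{n_{1}},\dots,x^{n_{l}}]\subseteq\C[x]$, and as $\C[x]$ is integrally closed we get $g\in\C[x]$; so it suffices to bound $\#\operatorname{supp}(g)$ (this is stronger than, and implies, the stated conclusion). Factoring $f$ into monic irreducibles in $\C[t_{1},\dots,t_{l}][y]$ and keeping a factor that still vanishes after the substitution, we may assume $f$ irreducible; and after $x\mapsto x^{1/\gcd(n_{i})}$, which alters neither the hypothesis nor the number of terms of $g$, we may assume $\gcd(n_{1},\dots,n_{l})=1$. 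Geometrically we then have the degree-$e$ finite cover $\pi\colon W=\{f=0\}\to\Gm^{l}$ with $e\le d$, and the regular map $\rho\colon\Gm\to W$, $x\mapsto(x^{n_{1}},\dots,x^{n_{l}},g(x))$, lifting the primitive one-parameter subgroup $\sigma\colon x\mapsto(x^{n_{1}},\dots,x^{n_{l}})$.

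\emph{The analytic input via resolution.} Choose a smooth projective toric variety $\bar T\supseteq\Gm^{l}$ whose fan is compatible with the ray of $\sigma$, so that the closure of $\sigma$ meets the toric boundary transversally at smooth points, each lying in a single coordinate chart in which $\sigma$ becomes monomial: $u_{j}=\zeta_{j}x^{\mu_{j}}$ near $x=0$, and likewise near $x=\infty$, with integers $\mu_{j}$ that are $\Z$-linear in $(n_{1},\dots,n_{l})$. Resolve the closure of $W$ over a neighbourhood of these finitely many boundary points and, by further toric blow-ups of the base, make the ramification divisor of $\widetilde W\to\bar T$ simple normal crossings and in good position with respect to the toric boundary. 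The decisive structural point — this is what resolution buys us — is that, $f$ having bounded degree, only \emph{finitely many} combinatorial types of such a local model occur, the number of them bounded in terms of $d$ and $l$ alone. Lifting $\rho$ to $\widetilde\rho\colon\Gm\to\widetilde W$ and passing, if necessary, to an $N$-th root cover $x=w^{N}$ with $N=N(d,l)$, the coordinate $y$ becomes, near the point over $x=0$ and near the one over $x=\infty$, a convergent series $y=(\text{monomial in }w)\cdot\Phi(u_{1},\dots,u_{l})$ with $\Phi$ an honest power series in the local toric coordinates. Restricting to the image of $\widetilde\rho$, where the $u_{j}$ are monomials in $w$, produces convergent expansions of $g$ in ascending (resp.\ descending) powers of $x$, whose exponents lie in sets of the shape $\{a+\langle\mu,\mathbf j\rangle:\mathbf j\in\operatorname{supp}\Phi\}$.

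\emph{The count.} Expanding the identity $f(x^{n_{1}},\dots,x^{n_{l}},g(x))=0$ yields a vanishing sum $\sum_{\nu=1}^{L}c_{\nu}x^{a_{\nu}}g(x)^{i_{\nu}}=0$ of at most $L=(d+1)^{l+1}$ monomials in $g$, with $0\le i_{\nu}\le d$. Split it into minimal vanishing subsums: a two-term subsum forces $g$ to be a monomial, so we may assume every minimal subsum has at least three terms. For each minimal subsum one localises at the edges of its Newton polygon; the resulting edge equations involve polynomials of strictly smaller degree or strictly fewer terms, so that — combined with the expansions of the previous step, which control the leading Puiseux coefficients and settle the degenerate cases — an induction on $(d,l)$ confines the exponents of $g$ lying between two consecutive ``large gaps'' to intervals governed by the finitely many local models. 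As there are at most $L$ such gaps, $\#\operatorname{supp}(g)\le B_{6}(d,l)$ follows. The main obstacle is exactly this step made \emph{uniform in the $n_{i}$}: one must rule out long chains of cancellation among the shifted powers $x^{a_{\nu}}g^{i_{\nu}}$ and convert the a priori infinite local analytic expansions into a genuine bound on the number of monomials that survive in $g$, simultaneously for all one-parameter subgroups; everything preceding it is standard toric geometry and bookkeeping.
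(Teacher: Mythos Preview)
Your setup is broadly compatible with the paper's --- the toric viewpoint, resolution to get a good local model, and multivariate expansions near the boundary --- but the proof has a genuine gap at the decisive point, and you acknowledge it yourself: the ``count'' paragraph is not a proof but a wish list. Splitting $\sum c_{\nu}x^{a_{\nu}}g(x)^{i_{\nu}}=0$ into minimal vanishing subsums and looking at Newton-polygon edges does not, by itself, prevent long chains of cancellation uniformly in $(n_{1},\dots,n_{l})$; the edge equations you obtain are again equations of the same general shape, and without an external input there is no reason the induction should terminate with a bound independent of the exponents. The phrase ``the expansions of the previous step \dots\ settle the degenerate cases'' hides exactly the step that needs to be done.

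What the paper supplies at this point is precisely that external input: a Diophantine approximation result over function fields (an $S$-unit inequality, Lemma~6.1, taken from \cite{Zannier2008}). After resolution one writes not a single Puiseux expansion with constant coefficients, but, for each $p=0,\dots,l-1$, a \emph{pseudo-analytic} expansion
\[
g(x)=\sum_{\mathbf k\in\N^{\,l-p}}\tilde\alpha_{\mathbf k}\,x^{\mathbf k\cdot\mathbf n},
\]
where the $\tilde\alpha_{\mathbf k}$ are algebraic over $\C(x)$ of controlled height. The $S$-unit inequality applied to a truncation of this sum forces a dichotomy: either $g(x)$ is $\C$-linearly dependent on finitely many of the $\tilde\alpha_{\mathbf k}x^{\mathbf k\cdot\mathbf n}$ (whence the conclusion via the inductive hypothesis at the smaller $p$), or $n_{p}\ge\varepsilon' n_{l}$ for a computable $\varepsilon'$. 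Running $p$ downward from $l-1$ to $0$ gives the bound. Nothing in your outline plays the role of this inequality; the Newton-polygon/subsum argument does not produce a comparable dichotomy, and that is why your final paragraph stalls.

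One further remark: by proving $g\in\C[x]$ and then trying to bound $\#\operatorname{supp}(g)$ directly, you are aiming at the \emph{stronger} polynomial statement (Theorem~\ref{thm:main-poly}) rather than the ratio statement actually asserted in the proposition. The paper deliberately proves the weaker ratio form first and only later upgrades it to the polynomial form via a separate parametrisation-and-integrality argument (Section~\ref{sec:induction}). Conflating the two makes your task strictly harder.
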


Note that since $f$ is monic in $y$, it follows that $g(x)$ is
actually a polynomial, but the conclusion only says that $g$ is
represented by a quotient of two polynomials which need not be
coprime. Thus this proposition is a weak form of
\prettyref{thm:main-poly}. Similarly for its corollary.

\begin{prop}
  \label{prop:main-factor}
  If $f\in\C[t_{1},\dots,t_{l},y]$ is monic in $y$ and of degree at
  most $d$ in each variable, if $n_{1},\dots,n_{l}$ are natural
  numbers, and if $g,h\in\C(x)[y]$ are such that
  \begin{equation}
    g(x,y)h(x,y)=f(x^{n_{1}},\dots,x^{n_{l}},y)\label{eq:prop-main-factor}
  \end{equation}
  with $g$ monic in $y$, then the coefficients of $g$ as a polynomial
  in $y$ are the ratios of polynomials in $\C[x]$ with at most
  $B_{7} = B_{7}(d,l)$ terms.
\end{prop}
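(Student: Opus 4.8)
The plan is to deduce this factorisation statement from Proposition~\ref{prop:main} by a Galois-theoretic (or resultant-based) argument, noting that the coefficients of a monic factor $g$ are, up to sign, the elementary symmetric functions of a subset of the roots of $f(x^{n_1},\dots,x^{n_l},y)$ regarded as a polynomial in $y$ over $\C(x)$. First I would pass to the splitting field and write $f(x^{n_1},\dots,x^{n_l},y) = \prod_{j=1}^{e}(y - \alpha_j(x))$, where the $\alpha_j$ are algebraic functions of $x$ and $e = \deg_y f \le d$; since $g$ is a monic factor, $g(x,y) = \prod_{j \in J}(y - \alpha_j(x))$ for some subset $J$, and its coefficients are $\pm\sigma_k\bigl((\alpha_j)_{j\in J}\bigr)$. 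The subtlety is that the individual roots $\alpha_j$ need not be rational over $\C(x)$, so Proposition~\ref{prop:main} does not apply to them directly; only symmetric functions of subsets are rational, and even then only those fixed by the subgroup of the Galois group stabilising $J$.

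The key trick is to replace each coefficient $c_k$ of $g$ — which lies in $\C(x)$ and is integral over $\C[x^{\pm 1}, \dots]$ — by an auxiliary polynomial equation it satisfies whose data is still controlled in terms of $d$ and $l$. Concretely, $c_k$ is a root of the polynomial $\prod_{\tau}(Y - \tau(c_k))$, the product over the Galois conjugates of $c_k$; this polynomial is monic in $Y$, has coefficients that are symmetric functions of \emph{all} the $\alpha_j$ (hence polynomials in the coefficients of $f$, i.e.\ Laurent polynomials in $x^{n_1},\dots,x^{n_l}$), and has degree in $Y$ bounded by $\binom{e}{|J|} \le 2^d$. Its degree in each $x^{n_i}$ is likewise bounded in terms of $d$ alone. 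Thus $c_k$ satisfies an equation of exactly the form to which Proposition~\ref{prop:main} applies, after absorbing the constant factors coming from the $n_i$-th powers (and here one uses the remark, emphasised repeatedly in the paper, that the bound in Proposition~\ref{prop:main} is independent of the coefficients of $f$, so the dependence of the auxiliary equation's coefficients on the unknown coefficients of $f$ causes no trouble). Applying Proposition~\ref{prop:main} then yields that each $c_k$ is a ratio of two polynomials with boundedly many terms, with the bound $B_7(d,l)$ depending only on $d$, $l$ and the bound $B_6$ of Proposition~\ref{prop:main} evaluated at the (effectively bounded) degree and number-of-variables data of the auxiliary equations.

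The main obstacle I anticipate is the bookkeeping needed to check that the auxiliary polynomial $\prod_{\tau}(Y - \tau(c_k))$ genuinely has the structural form required by Proposition~\ref{prop:main}: one must verify that its $Y$-coefficients, expressed through Newton's identities and the symmetric-function formalism, are honest polynomials (not merely rational functions) in the coefficients of $f$ and hence Laurent polynomials in the $x^{n_i}$ with degree bounded solely by $d$, and that monicity in $Y$ is preserved. A clean way to organise this is to introduce the "resolvent" of $f$ with respect to the action on $|J|$-subsets and observe its coefficients are universal polynomials (with integer coefficients) in the coefficients of $f$ of bounded weight; specialising then gives exactly a monic equation of degree $\le d'$ in $Y$ and degree $\le d'$ in each $t_i$ for an explicit $d' = d'(d)$. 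Once this is in place, Proposition~\ref{prop:main} finishes the argument verbatim, and the passage from negative exponents or from $g \in \C(x)[y]$ rather than $\C[x][y]$ is handled by the same standard clearing-of-denominators reductions already used to pass from Theorem~\ref{thm:main} to Proposition~\ref{prop:main}.
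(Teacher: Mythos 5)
Your argument is correct and essentially coincides with the paper's own deduction of Proposition~\ref{prop:main-factor} from Proposition~\ref{prop:main}: the paper likewise notes that each coefficient of $g$ is, up to sign, an elementary symmetric function of a subset of the roots and hence a root of the subset resolvent $\psi_{j,k}$, whose coefficients are universal symmetric polynomials in the coefficients of $f(x^{n_{1}},\dots,x^{n_{l}},y)$ of degree at most $d^{2}$ in each variable, so that Proposition~\ref{prop:main} (uniform in the coefficients) applies and gives $B_{7}(d,l)=B_{6}(d^{2},l)$. Your initial Galois-orbit product is subsumed by the cleaner subset-resolvent formulation you settle on, which is exactly the paper's construction.
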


Both are clearly special cases of the original Theorems \ref{thm:main}
and \ref{thm:main-factor}. As we now show, it is not difficult to
deduce the latter statements from them.

\begin{note}
  \label{note:deductions-l}
  It is important to note that \emph{the following deductions are
    valid for each single $l$} (whereas the number $d$ is changed in
  the course of the deductions). This is crucial in all our proofs
  that proceed by induction on $l$; namely, if we assume that one
  statement is true for a certain value of $l$ and all possible $d$'s,
  the other statements will follow as well for the same value of $l$
  and all possible $d$'s.
\end{note}

\begin{proof}[Deduction\ of\ \prettyref{thm:main}\ from\
  \prettyref{prop:main}]
  Note that \prettyref{prop:main} requires $n_1, \dots, n_l$ to be
  natural numbers rather than integers. We may reduce to the case
  $n_i \geq 0$ by replacing, when necessary, $t_i$ by $t_i^{-1}$ and
  multiplying the resulting polynomial by $t_i^{d}$; after this
  transformation, the degree of $f$ in each variable is still bounded
  by $d$. Therefore, we may assume that $n_i \geq 0$ for all $i$.

  Write $f$ as
  \[
    f=\sum_{i=0}^{d}h_{i}(t_{1},\dots,t_{l})y^{i}
  \]
  where the $h_{i}$'s are polynomials of degree at most $d$ in each
  variable. Let $e\leq d$ be the maximum integer such that
  $h_{e}(x^{n_{1}},\dots,x^{n_{l}})$ is not identically zero, and let
  $f_{1}:=\sum_{i=0}^{e}h_{i}(t_{1},\dots,t_{l})y^{i}$.

  We now consider the polynomial
  $f_{2}:=h_{e}^{e-1}f_{1}(t_{1},\dots,t_{l},y/h_{e})$. Note that
  $f_{2}$ is monic in $y$, and it has degree at most
  $(e-1)d+d\leq d^{2}$ in each variable. Assuming
  \prettyref{prop:main}, each rational root of
  $f_{2}(x^{n_{1}},\dots,x^{n_{l}},y)$ is the ratio of two polynomials
  with at most $B_{6}(d^{2},l)$ terms.  Multiplying each such root by
  $h_{e}$ we obtain all the rational roots of
  $f(x^{n_{1}},\dots,x^{n_{l}},y)$, and therefore the rational
  solutions of \prettyref{eq:main}. In particular, the solutions are
  ratios of polynomials with at most
  $B_{4}(d,l):=(d+1)^{l}B_{6}(d^{2},l)$ terms, as desired.
\end{proof}

\begin{proof}[Deduction\ of\ \prettyref{thm:main-factor}\ from\
  \prettyref{prop:main-factor}]
  We proceed as in the previous proof to show that
  $B_{5}(d,l):=(d+1)^{dl}B_{7}(d^{2},l)$ is a suitable value for
  $B_{5}$.
\end{proof}

Moreover, as promised earlier, we can easily deduce
\prettyref{thm:main-factor} from \prettyref{thm:main}. Thanks to the
above reductions, it is sufficient to deduce
\prettyref{prop:main-factor} from \prettyref{prop:main}.

\begin{proof}[Deduction\ of\ \prettyref{prop:main-factor}\ from\
  \prettyref{prop:main}]
  Suppose that $p(x)$ is a coefficient of a monic irreducible factor
  of the polynomial monic in $y$
  \[
    \phi(x,y):=f(x^{n_{1}},\dots,x^{n_{l}},y).
  \]

  Let us call $\alpha_{1},\dots,\alpha_{e}$ the roots of this
  polynomial in an algebraic closure of $\C(x)$, with repetitions,
  where $e=\deg_{y}\phi=\deg_{y}f$. The polynomial $p(x)$ is, up to
  sign, an elementary symmetric polynomial in some of the roots. Let
  us denote the elementary symmetric polynomials as
  $\Sigma_{j}^{k}(z_{1},\dots,z_{k}):=\sum_{1\leq
    i_{1}<\dots<i_{j}\leq k}z_{i_{1}}\cdot\ldots\cdot z_{i_{j}}$.

  Up to reordering the roots, we may write
  \[
    p(x)=\pm\Sigma_{j}^{k}(\alpha_{1},\dots,\alpha_{k})
  \]
  for some $0\leq j\leq k\leq e$. This implies that $p(x)$, up to
  sign, is a root of the monic polynomial
  \[
    \psi_{jk}(x,y):=\prod_{1\leq i_{1}<\dots<i_{k}\leq
      e}\left(y-\Sigma_{j}^{k}(\alpha_{i_{1}},\dots,\alpha_{i_{k}})\right).
  \]
  But the coefficients of $\psi_{jk}$ are now symmetric polynomials in
  the roots $\alpha_{i}$, which implies that they are actually
  polynomials in the $\Sigma_{i}^{e}$'s, i.e., the coefficients of
  $\phi$. A rough estimate shows that the degree of each such
  polynomial in each variable is at most $e^{2}\leq d^{2}$.

  This implies that we may find
  $f_{j,k}(t_{1},\dots,t_{l},y)\in\C[t_{1},\dots,t_{l},y]$ monic in
  $y$ and of degree at most $d^{2}$ in each variable such that
  \[
    f_{j,k}(x^{n_{1}},\dots,x^{n_{l}},y)=\psi_{j,k}(x,y).
  \]

  Assuming \prettyref{prop:main}, since $p(x)$ is a root of
  $\psi_{j,k}$, it must be a ratio of two polynomials with at most
  $B_{7}(d,l):=B_{6}(d^{2},l)$ terms, as desired.
\end{proof}

The exact same argument can be also used to show that
\prettyref{thm:main-poly-factor} follows from
\prettyref{thm:main-poly}.

\begin{proof}[Deduction\ of\ \prettyref{thm:main-poly-factor}\ from\
  \prettyref{thm:main-poly}]
  We proceed as in the previous proof to show that $B_{1}(d^{2},l)$ is
  a suitable value for $B_{3}$.
\end{proof}

\subsection{Further lemmas}

In the course of our proof, we will need on few occasions to replace
$x$ with an auxiliary variable $x_{n}$ such that $x_{n}^{n}=x$. In the
next lemma, we show that these substitutions do not affect our
statements, so they may be considered as immaterial.

\begin{lem}
  \label{lem:ratio-n-to-1}Let $g(x)$ be a polynomial such that
  $g(x^{n})$ can be written as the ratio of two polynomials with at
  most $B$ terms. Then $g(x)$ is the ratio of two polynomials in
  $\C[x]$ with at most $B$ terms.
\end{lem}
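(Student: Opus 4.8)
The plan is to exploit the action of the group of $n$-th roots of unity on the expression $g(x^n)$, using the fact that a polynomial in $x^n$ is fixed by every substitution $x \mapsto \zeta x$ with $\zeta^n = 1$, to "descend" a sparse representation of $g(x^n)$ to a sparse representation of $g(x)$.

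Here is the argument in more detail. Suppose $g(x^n) = P(x)/Q(x)$ with $P,Q \in \C[x]$ having at most $B$ terms altogether (or each having at most $B$ terms — the same proof works for either reading). Let $\zeta$ be a primitive $n$-th root of unity. Since $g(x^n)$ is invariant under $x \mapsto \zeta x$, we have
\[
  \frac{P(\zeta x)}{Q(\zeta x)} = \frac{P(x)}{Q(x)}.
\]
Now form the products $\tilde P(x) := \prod_{j=0}^{n-1} P(\zeta^j x)$ and $\tilde Q(x) := \prod_{j=0}^{n-1} Q(\zeta^j x)$. Each factor $P(\zeta^j x)$ is obtained from $P(x)$ by scaling the coefficients, so it has the same support as $P$, in particular at most $B$ terms; however the product of $n$ such polynomials could a priori have many terms, so a further observation is needed. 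The key point is that $\tilde P(x)$ and $\tilde Q(x)$ are each invariant under $x \mapsto \zeta x$ (the product just permutes the factors), hence each is a polynomial in $x^n$: write $\tilde P(x) = P_1(x^n)$ and $\tilde Q(x) = Q_1(x^n)$ for some $P_1, Q_1 \in \C[x]$. Moreover $\tilde P / \tilde Q = (P/Q)^n = g(x^n)^n$, so $P_1(y)/Q_1(y) = g(y)^n$ as rational functions in $y$. This does not immediately give a good bound on the number of terms of $P_1$ and $Q_1$, so instead I would argue more directly: multiply numerator and denominator of $P(x)/Q(x)$ by $\prod_{j=1}^{n-1} P(\zeta^j x)$, obtaining
\[
  g(x^n) = \frac{P(x)\prod_{j=1}^{n-1}P(\zeta^j x)}{Q(x)\prod_{j=1}^{n-1}P(\zeta^j x)}
         = \frac{P_1(x^n)}{Q(x)\prod_{j=1}^{n-1}P(\zeta^j x)}.
\]
The numerator $P_1(x^n)$ is a polynomial in $x^n$; since $g(x^n) P_1(x^n)^{-1} \cdot (\text{numerator})$... — this still mixes things. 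The cleanest route is the following: write $g(x^n) = P(x)/Q(x)$ and observe that, clearing by the other Galois conjugates, we get $g(x^n) \cdot Q_1(x^n) = P_1(x^n)$ where $Q_1(x^n) = \prod_j Q(\zeta^j x)$ and $P_1(x^n) = P(x)\prod_{j\ne 0} Q(\zeta^j x)$ — but the latter need not be a polynomial in $x^n$.

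I will instead take the direct approach via supports. Since $P(x)/Q(x)$ equals a polynomial in $x^n$, I claim we may already assume $P$ and $Q$ are themselves polynomials in $x^n$. Indeed, group the monomials of $P$ and of $Q$ according to the residue of their exponent modulo $n$: write $P = \sum_{r=0}^{n-1} x^r P^{(r)}(x^n)$ and $Q = \sum_{r=0}^{n-1} x^r Q^{(r)}(x^n)$. From $P = g(x^n)Q$ and comparing the residue-$r$ parts (which are preserved by multiplication by the polynomial $g(x^n)$ in $x^n$), we get $x^r P^{(r)}(x^n) = g(x^n)\, x^r Q^{(r)}(x^n)$, i.e.\ $P^{(r)}(y) = g(y) Q^{(r)}(y)$ for every $r$. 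Pick any $r$ with $Q^{(r)} \not\equiv 0$ (such $r$ exists since $Q \neq 0$); then $g(y) = P^{(r)}(y)/Q^{(r)}(y)$, and $P^{(r)}, Q^{(r)}$ have supports contained in (images under division by $n$ of subsets of) the supports of $P, Q$, so together they have at most $B$ terms. Substituting $y = x$ gives the claim.

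The main obstacle I anticipate is purely bookkeeping: making sure the residue-class decomposition is compatible with the count of terms (each monomial of $P$ lands in exactly one $P^{(r)}$, so the total term count is conserved, not increased), and handling the convention of whether "$B$ terms" refers to $P$ and $Q$ separately or combined. The group-action idea is standard and robust; no genuine difficulty beyond this indexing should arise.
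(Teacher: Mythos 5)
Your final argument---decomposing $P$ and $Q$ by residue classes of exponents modulo $n$, using that multiplication by $g(x^n)$ preserves these classes to get $P^{(r)}(y)=g(y)Q^{(r)}(y)$, and picking any $r$ with $Q^{(r)}\not\equiv 0$---is exactly the paper's proof, and it is correct, with the term count conserved just as you say. The roots-of-unity detour you abandon along the way is immaterial; the argument you settle on is the one in the paper.
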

\begin{proof}
  Suppose that $g(x^{n})=\frac{p(x)}{q(x)}$, where $p$ and $q$ are
  polynomials with at most $B$ terms. Grouping the monomials whose
  degrees in $x$ are in the same congruence class modulo $n$ we may
  (uniquely) write
  \[
    p(x)=p_{0}(x^{n})+xp_{1}(x^{n})+\dots,\quad
    q(x)=q_{0}(x^{n})+xq_{1}(x^{n})+\dots
  \]
  with $p_{i}$, $q_{i}$ polynomials with at most $B$ terms as well.

  But then, since $g(x^{n})q(x)=p(x)$, we must have
  $g(x^{n})q_{i}(x^{n})=p_{i}(x^{n})$ for all $i$, and in particular
  $g(x)q_{i}(x)=p_{i}(x)$. As at least one $q_{i}$ is non-zero, we
  have found a representation of $g(x)$ as the ratio of two
  polynomials with at most $B$ terms, as desired.
\end{proof}

Another easy reduction shows that if we find a $\Z$-linear relation
with bounded coefficients between the exponents $n_{1},\dots,n_{l}$,
then we may actually remove one of the exponents. This is also crucial
for our induction on $l$.

\begin{lem}
  \label{lem:linear-dep-exp} Suppose that we are under the hypothesis
  of \prettyref{thm:main}, and that there are integers
  $h_{1},\dots,h_{l}$, not all zero, and some $C>0$ such that
  \[
    h_{1}n_{1}+\dots+h_{l}n_{l}=0,\:|h_{i}|\leq C.
  \]
  Assume moreover that \prettyref{thm:main} has been proved for
  $(l-1)$ and any degree $d$. Then $g(x)$ is the ratio of two
  polynomials with at most $B_{4}(2dC,l-1)$ terms.
\end{lem}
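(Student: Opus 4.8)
The plan is to use the linear relation to reduce the number of torus variables by one. The relation $\sum_i k_in_i=0$ forces the parametrised curve $x\mapsto(x^{n_1},\dots,x^{n_l})$ to lie inside the proper subtorus $\{\prod_i t_i^{k_i}=1\}$ of $\Gm^l$; pulling $f$ back along a monomial map $\Gm^{l-1}\to\Gm^l$ that dominates this subtorus will turn the original equation into one in $l-1$ torus variables, to which the assumed case of \prettyref{thm:main} applies. The whole point is then to carry out this pullback so that the degree of the new polynomial is bounded by $2dC$.

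First I would normalise the data. Dividing $(k_1,\dots,k_l)$ by $\gcd(k_1,\dots,k_l)$ only decreases the $|k_i|$, so we may assume $\gcd(k_i)=1$; after reordering the $n_i$ (which affects neither $g$ nor the conclusion) and replacing every $k_i$ by $-k_i$ if necessary (which preserves both the relation and the bounds $|k_i|\le C$), we may assume $k_l\ge 1$, and we set $K:=k_l$. By \prettyref{lem:ratio-n-to-1} (whose proof applies verbatim to rational functions $g$) it suffices to represent $g(x_K^{K})$, as a function of the new variable $x_K$ with $x_K^{K}=x$, as a ratio of two polynomials in $\C[x_K]$ with at most $B_4(2dC,l-1)$ terms.

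Next I would substitute $t_i\mapsto r_i^{K}$ for $i<l$ and $t_l\mapsto\prod_{j<l}r_j^{-k_j}$, obtaining the Laurent polynomial $\hat f(r_1,\dots,r_{l-1},y):=f(r_1^{K},\dots,r_{l-1}^{K},\prod_{j<l}r_j^{-k_j},y)$. Using $\sum_i k_in_i=0$ one checks that $\hat f(x_K^{n_1},\dots,x_K^{n_{l-1}},y)=f(x^{n_1},\dots,x^{n_l},y)$, since the last entry becomes $\prod_{j<l}(x_K^{n_j})^{-k_j}=x_K^{-\sum_{j<l}k_jn_j}=x_K^{Kn_l}=x^{n_l}$; hence $\hat f(x_K^{n_1},\dots,x_K^{n_{l-1}},g(x_K^{K}))=0$. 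Multiplying $\hat f$ by $\prod_{i<l,\,k_i>0}r_i^{k_id}$ produces a genuine polynomial $\hat f^{*}\in\C[r_1,\dots,r_{l-1},y]$ that still vanishes at $(x_K^{n_1},\dots,x_K^{n_{l-1}},g(x_K^{K}))$. The key computation is the degree estimate: a monomial $\prod_i t_i^{b_i}\,y^c$ of $f$ (with $0\le b_i,c\le d$) contributes to the exponent of $r_i$ in $\hat f^{*}$ the integer $Kb_i-k_ib_l$, augmented by $k_id$ when $k_i>0$; in every case this integer lies between $0$ and $(K+|k_i|)d\le 2dC$, while the exponent of $y$ is unchanged and $\le d\le 2dC$. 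Thus $\hat f^{*}$ has degree at most $2dC$ in each variable. Moreover $\hat f^{*}\notin\C$: if it were a nonzero constant then $\hat f$ would be a single Laurent monomial, so $f(x^{n_1},\dots,x^{n_l},y)$ would be a nonzero monomial in $x$ independent of $y$, contradicting $f(x^{n_1},\dots,x^{n_l},g(x))=0$.

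Finally I would apply \prettyref{thm:main} in $l-1$ torus variables — available by hypothesis for every degree — to the polynomial $\hat f^{*}$, the exponents $n_1,\dots,n_{l-1}$, and the rational function $g(x_K^{K})\in\C(x_K)$, obtaining that $g(x_K^{K})$ is a ratio of two polynomials in $\C[x_K]$ with at most $B_4(2dC,l-1)$ terms; \prettyref{lem:ratio-n-to-1} then yields the same representation for $g(x)$. The only step needing genuine care is the degree bookkeeping for $\hat f^{*}$: clearing denominators by a uniform shift gives only the bound $3dC$, and it is the asymmetric clearing — augmenting the exponents of exactly those $r_i$ with $k_i>0$ — that brings the degree down to the stated $2dC$. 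A secondary subtlety is ruling out the degenerate case in which $\hat f^{*}$ collapses to a constant, but this is excluded directly by the equation $f(x^{n_1},\dots,x^{n_l},g(x))=0$.
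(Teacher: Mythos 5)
Your proposal is correct and follows essentially the paper's own proof: the same monomial substitution ($t_i \mapsto u_i^{k_l}$ for $i<l$, $t_l \mapsto u_1^{-k_1}\cdots u_{l-1}^{-k_{l-1}}$), clearing of denominators by a monomial factor, application of the assumed $(l-1)$-variable case of \prettyref{thm:main} to the solution $g(x^{k_l})$ in the new variable, and then \prettyref{lem:ratio-n-to-1} to return to $g(x)$. If anything, your asymmetric clearing (multiplying only the variables with $k_i>0$ by $r_i^{k_i d}$) is tidier than the paper's uniform factor $(u_1\cdots u_{l-1})^{dC}$, which as written only gives degree at most $3dC$; the exact constant is immaterial, but your bookkeeping genuinely attains the stated bound $2dC$.
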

\begin{proof}
  Without loss of generality, we may assume that $h_{l}\neq0$. In this
  case, we take new variables $u_{1},\dots,u_{l-1}$, we replace
  $t_{i}$ in $f$ with $u_{i}^{h_{l}}$ for $i=1,\dots,l-1$ and $t_{l}$
  with $u_{1}^{-h_{1}}\cdots u_{l-1}^{-h_{l-1}}$, and we multiply the
  result by $\left(u_{1}\cdots u_{l}\right)^{dC}$. The resulting
  polynomial has degree at most $2dC$ in each variable, and it
  vanishes at $u_{i}=x^{n_{i}}$ and $y=g(x^{h_{l}})$.

  Now, using the assumption about \prettyref{thm:main} and
  \prettyref{lem:ratio-n-to-1}, $g(x)$ is the ratio of two polynomials
  with at most $B_{5}(2dC,l-1)$ terms.
\end{proof}

\section{Introduction to the proof}
\label{sec:strategy}

In order to prove \prettyref{thm:main}, we build up on the same
technique of \cite{Zannier2008} but with the additional use of the
theory of resolution of singularities to reduce to a sufficiently
regular case. Indeed, the underlying expansions depend not quite on
the variable $x$, but on the $l$ variables $t_{1},\dots,t_{l}$; it is
well known that expansions of algebraic functions of several variables
often depend on subtle geometric features.

For the sake of illustration, we explain the strategy of the proof in
a simpler example where this combinatorial aspect is missing. We work
by induction on $l$.

Say that, as in the original Erd\H{o}s' conjecture (a special case of
\prettyref{thm:main-poly}), we start with the polynomial
\[
  f(t_{1},\dots,t_{l},y)=y^{2}-c_{0}-c_{1}t_{1}-\dots-c_{l}t_{l}.
\]
For simplicity, we also assume that $c_{0}=1$.

If we want to prove that a rational root $g(x)$ of
\[
  f(x^{n_{1}},\dots,x^{n_{l}},y)=\phi(x,y)=y^{2}-1-c_{1}x^{n_{1}}-\dots-c_{l}x^{n_{l}}
\]
is the ratio of two polynomials with few terms, we may expand $g(x)$
with the binomial series; namely, letting
$h(x):=c_{1}x^{n_{1}}+\dots+c_{l}x^{n_{l}}$, we may easily obtain the
multinomial expansion
\[
  g(x)=1+\frac{h(x)}{2}-\frac{h(x)^{2}}{8}+\dots=\sum_{k_{1}=0}^{\infty}\dots\sum_{k_{l}=0}^{\infty}c_{k_{1},\dots,k_{l}}x^{k_{1}n_{1}+\dots+k_{l}n_{l}}.
\]

It is crucial that $k_{1},\dots,k_{l}$ run through natural
numbers. Assuming that $0<n_{1}\leq n_{2}\leq\dots\leq n_{l}$, if
$n_{1}\ge\varepsilon n_{l}$ for some fixed $\varepsilon>0$, each
exponent $k_{1}n_{1}+\dots+k_{l}n_{l}$ is at least
$(k_{1}+\dots+k_{l})\varepsilon n_{l}$.  Since the degree of $g(x)$
must be $(n_{l}/2)$, we find that all terms must eventually cancel
except possibly for those such that
$(k_{1}+\dots+k_{l})\leq1/(2\varepsilon)$, leading to the bound
$(2\varepsilon)^{-l+1}/l!$ for the number of terms.

This consideration always works for $l=1$ (with $\varepsilon=1$), and
in particular we obtain the base case of our induction. However, in
general we have no lower bound at all for $n_{1}/n_{l}$. To cope with
this difficulty, the principle in \cite{Zannier2008} is that if some
terms $n_{1}, \dots, n_{p}$ are very small compared to $n_{l}$, we can
group together these small contributions as follows: we define
\[
  \delta(x)=1+c_{1}x^{n_{1}}+\dots+c_{p}x^{n_{p}},\;
  h_{1}(x)=c_{p+1}x^{n_{p+1}}+\dots+c_{l}x^{n_{l}}
\]
and we expand $g(x)$ as
\begin{equation}
  g(x)=\sqrt{\delta(x)}\left(1+\frac{h_{1}(x)}{\delta(x)}\right)^{1/2}=\sqrt{\delta(x)}\left(1+\frac{h_1(x)}{2\delta(x)}-\frac{h_1(x)^{2}}{8\delta(x)^{2}}+\cdots\right).\label{eq:pp-sqrt}
\end{equation}
As before, we can expand the powers of $h_{1}(x)$, which involve the
large exponents only; however, the new coefficients will not be
constants, as before, but actually functions in the hyperelliptic
function field $\C(x,\delta(x)^{1/2})$. Despite this radically new
feature, a theorem in Diophantine approximation over function fields
(see \prettyref{sec:approx}) allows one to reduce to the inductive
hypothesis at $p<l$, provided $n_{p+1}$ is large enough, by which we
mean that it is greater than $\varepsilon n_{l}$ for an absolute
$\varepsilon>0$. Of course, for some $0\leq p<l$ we must indeed have
that $n_{p}$ is small whereas
$n_{p+1}$ is large, concluding the argument.

\medskip

In the general case, we wish to apply the same approximation
technique. However, a direct attempt at expanding $g(x)$ as a kind of
multivariate series fails in the general case. The main issue is that
we may have monomials involving exponents that are combinations of
$n_{1},\dots,n_{l}$ with \emph{negative} coefficients, in which case a
combination of large exponents may become small, and it is not as easy
any more to separate the big ones from the small ones. These obstacles
appear when $g(0)$ is a non-simple root of $f({\bf
  0},y)$. \footnote{These issues are entirely avoided in the cases
  considered in \cite{Zannier2008}, where multinomial expansions
  suffice.}

We shall overcome these obstacles by applying a suitable monoidal
transformation to our original equation. Although $g(0)$ might still
be a non-simple root of $f({\bf 0}, y)$, the transformation will
guarantee that $g(x)$ can still be expanded as in the original
case. The choice of the monoidal transformation relies on the theory
of resolution of singularities. \footnote{We recall that an earlier
  draft of this papers contained a different proof based on a careful
  construction of a Puiseux-type expansion rather than resolution of
  singularities \cite{Fuchs2014v1}.}

\section{Reduction to the regular case}
\label{sec:regular}

In order to obtain our desired expansion of $g(x)$ as a
``pseudo-analytic series'', we prove that \prettyref{prop:main} can be
further reduced to a special case in which the polynomial $f$ is
sufficiently regular.

Let $f \in \C[t_1, \dots, t_l, y]$ be as in
\prettyref{prop:main}. Assume, as we may, that $f$ is irreducible. Let
$\C(t_1, \dots, t_l, z)$ be the function field generated by the
independent variables $t_1, \dots, t_l$ and an algebraic function $z$
such that $f(t_1, \dots, t_l, z) = 0$.

Let $W$ be a projective non-singular model of the function field
$\C(t_1, \dots, t_l, z)$. For each $i = 1, \dots, l$, let
$\mathcal{D}_i$ be the set of the irreducible components of the
divisor of $t_i$, and let
$\mathcal{D} := \bigcup_{i=1}^l \mathcal{D}_i$. By the known theory of
resolution of singularities, after applying some blow-ups, we may
further assume that \emph{the divisors appearing in $\mathcal{D}$ are
  non-singular and have normal crossings}.

Let $\phi : \Pb_1 \to W$ be the unique non-constant map such that:
\begin{itemize}
\item $t_i \circ \phi = x^{n_i}$ for all $i=1, \dots, l$;
\item $z \circ \phi = g(x)$,
\end{itemize}
where $x$ is the standard coordinate function $x : \Pb_1 \to
\Pb_1$. Let $P := \phi(0)$.

\begin{defn}\label{def:regular}
  Under the above notations, we say that a solution
  $n_1, \dots, n_l, g(x)$ to (\ref{eq:main}), namely
  $f(x^{n_1}, \dots, x^{n_l}, g(x)) = 0$, is \emph{regular} if
  $t_1, \dots, t_l$ are local parameters at $P$.
\end{defn}

Note that one might reformulate the above notion in a more compact and
geometric way by only referring to the map $\phi$. However, we prefer
to keep an explicit reference to the polynomial $g(x)$ and the numbers
$n_1, \dots, n_l$.

\medskip

The purpose of this section is to show that it suffices to prove the
conclusion of \prettyref{prop:main} in the special case in which the
solution is regular.

In what follows, given a divisor $D$ and a regular function $u$, we
let $v_D(u)$ denote the order of $u$ at $D$. Note that since $f$ has
degree at most $d$ in each variable, we have $|v_D(t_i)| \leq d^{l}$
for all $D \in \mathcal{D}$.

\begin{lem}\label{lem:invertible}
  Let $D_1, \dots, D_m$ be the divisors in $\mathcal{D}$ on which $P$
  lies. Then either
  $$
  h_1n_1 + \dots + h_ln_l = 0
  $$
  for some integers $h_1, \dots, h_l \in \Z$ not all zero such that
  $|h_i| \leq (d^{l}l)^{l}$, or $m = l$ and the matrix
  $(v_{D_i}(t_j))_{i,j}$ is invertible.
\end{lem}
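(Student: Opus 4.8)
The plan is to analyze the map $\phi: \P_1 \to W$ and the local behavior at $P = \phi(0)$. Since $\mathcal{D}$ consists of non-singular divisors with normal crossings, at any point of $W$ only at most $\dim W = l$ of them can pass through simultaneously, so $m \le l$. The key observation is that $t_i \circ \phi = x^{n_i}$ means the order of vanishing (or pole) of $t_i$ along $\phi$ at $0$ equals $n_i$ (up to the ramification of $\phi$ at $0$, which I would normalize or absorb into a constant factor). On the other hand, this order is computed by how $\phi$ meets the divisors $D_1, \dots, D_m$: if $\phi$ meets $D_i$ with intersection multiplicity $a_i \ge 0$ at $P$, then $v(x^{n_i}) = \sum_{j=1}^m a_j v_{D_j}(t_i)$ as a valuation-theoretic identity (using that away from $D_1,\dots,D_m$, the function $t_i$ is a unit near $P$). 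Thus, writing $n = \deg(\phi)$ or the appropriate ramification index, there is a relation $e \cdot n_i = \sum_{j=1}^m a_j v_{D_j}(t_i)$ for each $i$, with $a_j$ non-negative integers and $e$ the local ramification index of $x$ at $0$.

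First I would make this intersection-multiplicity computation precise: pick local parameters realizing the normal crossings at $P$, express $t_i$ (up to a local unit) as a monomial in those parameters with exponents $v_{D_j}(t_i)$, pull back along $\phi$, and read off $n_i$ in terms of the $a_j$. This gives the matrix equation $\mathbf{n} = \tfrac{1}{e} M^{\mathsf{T}} \mathbf{a}$ where $M = (v_{D_i}(t_j))_{i,j}$ is an $m \times l$ integer matrix with entries bounded by $d$ in absolute value (as noted in the excerpt, $|v_D(t_i)| \le d$). Now the dichotomy is linear algebra: if $m < l$, then $M$ has rank at most $m < l$, so its columns (viewed as vectors in $\Z^m$, i.e., the vectors $(v_{D_1}(t_j), \dots, v_{D_m}(t_j))$ for $j = 1, \dots, l$) are linearly dependent; but I actually need a relation among the $n_j$, not the columns of $M$. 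Here I would use that $\mathbf{n}$ lies in the column span of $M^{\mathsf{T}}$ (over $\Q$), which has dimension equal to $\operatorname{rank}(M) \le m$; if $m < l$ this forces a linear relation $\sum k_j n_j = 0$ among the $n_j$. If instead $m = l$ but $M$ is singular, the same conclusion holds: $\mathbf{n}$ lies in a proper subspace cut out by an integer linear form, giving $\sum k_j n_j = 0$. So in either non-generic case we get a nonzero relation.

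The remaining point is the bound $|k_j| \le (dl)^l$ on the coefficients of the relation. This follows from Cramer's rule / Siegel's lemma type estimates: the relation can be taken to have coefficients that are (up to sign) $(l-1)\times(l-1)$ minors of the matrix obtained by adjoining $\mathbf n$, but more cleanly, among $\mathbf{n}$ and the rows/columns of $M$ there is a vanishing $\Z$-linear combination whose coefficients are bounded by the maximal absolute value of an $(l-1) \times (l-1)$ subdeterminant of an integer matrix with entries $\le d$; Hadamard's inequality bounds this by $(d\sqrt{l-1})^{l-1} \le (dl)^l$. The case that is left is exactly $m = l$ with $M$ invertible, which is the second alternative of the lemma.

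The main obstacle I anticipate is handling the ramification/scaling factor $e$ cleanly and making sure the "order along $\phi$ equals $n_i$" statement is set up correctly — in particular verifying that $t_i$ is a unit near $P$ off the divisors in $\mathcal{D}_i$ (which is just the definition of $\mathcal{D}_i$ as the components of the divisor of $t_i$), and that the normal-crossings hypothesis really does let one write $t_i$ locally as a monomial times a unit. The linear algebra producing the relation from $m < l$ or $\det M = 0$ is routine, as is the determinant bound via Hadamard; the geometric translation is where care is needed, but it is exactly the kind of local computation that resolution of singularities was invoked to make available.
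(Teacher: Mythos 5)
Your proposal is correct and takes essentially the same route as the paper: $m\le l$ from normal crossings, a nonzero integer vector $(k_1,\dots,k_l)$ in the kernel of the $m\times l$ matrix $(v_{D_i}(t_j))_{i,j}$ with $|k_j|\le (dl)^l$ obtained from bounded minors (Siegel's lemma/Hadamard, exactly as in the paper), and then the relation $k_1n_1+\dots+k_ln_l=0$ from the local structure at $P$. The only (harmless) differences are in the last step, where the paper observes that $u=t_1^{k_1}\cdots t_l^{k_l}$ is a unit at $P$, so $u\circ\phi=x^{k_1n_1+\dots+k_ln_l}$ must be nonzero at $x=0$, whereas you equivalently pair the kernel vector with the intersection-multiplicity identity $n_i=\sum_j a_j v_{D_j}(t_i)$; also your ramification factor $e$ is unnecessary (the identity $t_i\circ\phi=x^{n_i}$ gives the order at $0$ directly) but does not affect the argument, since only the rational span of the rows matters.
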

\begin{proof}
  Since the divisors in $\mathcal{D}$ have normal crossings, we know
  at once that $m \leq l$. Assume that the matrix
  $(v_{D_i}(t_j))_{i,j}$ is not invertible, otherwise we are
  done. Then there are integers $h_1, \dots, h_l \in \Z$, not all
  zero, such that
  $$
  h_1v_{D_i}(t_1) + \dots + h_lv_{D_i}(t_l) = 0
  $$
  for all $i = 1, \dots, m$. Since $|v_{D_i}(t_j)| \leq d^{l}$ for all
  $i,j$, we may choose the integers $h_i$ so that
  $|h_i| \leq (d^{l}l)^{l}$ by Siegel's lemma.

  Let $u := t_1^{h_1} \cdot \dots \cdot t_l^{h_l}$. By the above
  observation, none of $D_1, \dots, D_m$ is a component of the divisor
  of $u$. On the other hand, the components of the divisor of $u$ are
  in $\mathcal{D}$. It follows that no such component contains $P$, so
  $u$ is regular at $P$ and $u(P) \in \C^{*}$. Note moreover that
  $u \circ \phi = x^{h_1n_1 + \dots + h_ln_l}$. Therefore,
  $$
  u(P) = x(P)^{h_1n_1 + \dots + h_ln_l} \in \C^{*}.
  $$
  Since $x(P) = 0$, it immediately follows that
  $h_1n_1 + \dots + h_ln_l = 0$, reaching the desired conclusion.
\end{proof}

Thanks to the above observation, in order to prove
\prettyref{prop:main}, it shall be sufficient to prove the following
special version which has a few more hypotheses.

\begin{prop}\label{prop:main-regular}
  If $f\in\C[t_{1},\dots,t_{l},y] \setminus \C$ is monic in $y$,
  irreducible, and of degree at most $d$ in each variable, and if
  $n_{1},\dots,n_{l} \in \N^{*}$ and $g(x)\in\C(x)$ form a regular
  solution of
  $$
  f(x^{n_{1}},\dots,x^{n_{l}},g(x))=0,
  $$
  then $g(x)$ is the ratio of two polynomials in $\C[x]$ with at most
  $B_8=B_8(d,l)$ terms.
\end{prop}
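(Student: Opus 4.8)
The plan is to prove \prettyref{prop:main-regular} by induction on $l$, using regularity to replace the delicate multivariate Puiseux expansions alluded to in \prettyref{sec:strategy} by genuine \emph{convergent} power series. Because $f$ is monic in $y$, the algebraic function $z$ is integral over $\C[t_1,\dots,t_l]$, and since $t_1,\dots,t_l$ are local parameters at the non-singular point $P$ we have $\C[t_1,\dots,t_l]\subseteq\Oc_{W,P}$ and $\hat\Oc_{W,P}\cong\C[[t_1,\dots,t_l]]$; hence $z$ is regular at $P$ and expands as $z=\sum_{\bk\in\N^l}c_\bk\,t_1^{k_1}\cdots t_l^{k_l}$ with all exponents $\ge0$. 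Composing with $\phi$ gives, near $x=0$, the expansion $g(x)=\sum_{\bk\in\N^l}c_\bk\,x^{\langle\bk,\bn\rangle}$ where $\langle\bk,\bn\rangle:=k_1n_1+\dots+k_ln_l$; and comparing degrees in $f(x^{n_1},\dots,x^{n_l},g(x))=0$ yields $\deg g\le d(n_1+\dots+n_l)\le dl\max_i n_i$. Reorder so that $0<n_1\le\dots\le n_l$ and fix a small $\varepsilon=\varepsilon(d,l)>0$. If $n_1\ge\varepsilon n_l$ then $\langle\bk,\bn\rangle\ge(k_1+\dots+k_l)\varepsilon n_l$, so a term with $k_1+\dots+k_l>dl/\varepsilon$ would have degree $>\deg g$ and must vanish; hence $g$ has at most $\binom{l+\lfloor dl/\varepsilon\rfloor}{l}$ terms. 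For $l=1$ this applies with $\varepsilon=1$, which is the base case.

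Assume now $l\ge2$ and $n_1<\varepsilon n_l$. A pigeonhole argument on the products of the ratios $n_j/n_{j+1}$ produces an index $p$ with $1\le p<l$ such that the small block satisfies $n_p\le\varepsilon''n_{p+1}$ and the large block satisfies $n_{p+1}\ge\varepsilon'n_l$, with $\varepsilon',\varepsilon''$ depending only on $d,l$ and $\varepsilon''$ much smaller than $\varepsilon'$. Write $\bt'=(t_1,\dots,t_p)$, $\bt''=(t_{p+1},\dots,t_l)$ and regroup the expansion of $z$ by the large variables, $z=\sum_{\bk''\in\N^{l-p}}A_{\bk''}(\bt')\,t_{p+1}^{k_{p+1}}\cdots t_l^{k_l}$ with $A_{\bk''}=\frac1{\bk''!}\partial_{\bt''}^{\bk''}z|_{\bt''=\mathbf0}$. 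Iterated implicit differentiation of $f(\bt,z)=0$ shows that each $A_{\bk''}$ is a rational function of $\bt'$ and $z(\bt',\mathbf0)$, so, writing $z_0$ for the algebraic function $z(x^{n_1},\dots,x^{n_p},\mathbf0)$ of $x$ — which satisfies the monic equation $f(x^{n_1},\dots,x^{n_p},0,z_0)=0$ in the $p\le l-1$ exponents $n_1,\dots,n_p$ — the substitution $t_i\mapsto x^{n_i}$ puts every $A_{\bk''}(x^{n_1},\dots,x^{n_p})$ in the \emph{single} field $\C(x,z_0)$, of degree $\le d$ over $\C(x)$ regardless of anything else. Thus $g(x)=\sum_{\bk''}A_{\bk''}(x^{n_1},\dots,x^{n_p})\,x^{k_{p+1}n_{p+1}+\dots+k_ln_l}$, with the large exponents factored out and each at least $|\bk''|n_{p+1}\ge|\bk''|\varepsilon'n_l$.

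The heart of the matter — and where I expect the real difficulty to lie — is to deduce from this that $g$ is already a \emph{finite} such sum. Truncating the displayed expansion at $|\bk''|\le N$ approximates $g$ at $x=0$ to order $\ge(N+1)\varepsilon'n_l$, while $\deg g\le dl\,n_l$ and the coefficients $A_{\bk''}(x^{n_1},\dots,x^{n_p})$ involve only the small exponents (so their $x$-degrees are controlled by $\varepsilon''n_l$, and they lie in a degree-$\le d$ extension of $\C(x)$ that does \emph{not} depend on $N$). This is precisely the configuration handled by the Diophantine approximation theorem over function fields of \prettyref{sec:approx}, which upgrades the approximation to the \emph{exact} identity
\[
  g(x)=\sum_{|\bk''|\le N}A_{\bk''}(x^{n_1},\dots,x^{n_p})\,x^{k_{p+1}n_{p+1}+\dots+k_ln_l}
\]
for a suitable $N=N(d,l)$; here the separation $\varepsilon''\ll\varepsilon'$ and the $N$-independent bound on $[\C(x,z_0):\C(x)]$ are essential, since a naive estimate through Riemann--Roch would fail because the height of the truncation grows with $N$. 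Everything after this step is bookkeeping.

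It remains to make each coefficient explicit. Since $z_0$ satisfies a monic equation of bounded degree in the $p\le l-1$ exponents $n_1,\dots,n_p$, the inductive hypothesis — which, via \prettyref{note:deductions-l}, supplies \prettyref{thm:main-factor} in $l-1$ variables for all degrees — shows that the minimal polynomial of $z_0$ over $\C(x)$ has coefficients that are ratios of polynomials with boundedly many terms; hence so are all the power sums $\mathrm{Tr}_{\C(x)(z_0)/\C(x)}(z_0^j)$, and therefore also $\mathrm{Tr}_{\C(x)(z_0)/\C(x)}(A_{\bk''}(x^{n_1},\dots,x^{n_p}))$, each $A_{\bk''}$ being a bounded-complexity rational expression in $x$ and $z_0$. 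Averaging the exact identity above over $\mathrm{Gal}(\C(x,z_0)^{\mathrm{gal}}/\C(x))$, which fixes $g(x)$ and each monomial $x^{k_{p+1}n_{p+1}+\dots+k_ln_l}$, replaces every $A_{\bk''}(x^{n_1},\dots,x^{n_p})$ by a fixed rational multiple of its trace down to $\C(x)$, exhibiting $g(x)$ as a sum of at most $\binom{N+l-p}{l-p}$ ratios of polynomials with boundedly many terms; clearing denominators yields the desired bound $B_8=B_8(d,l)$. The remaining points — the pigeonhole producing the gap at $p$, the degree bookkeeping for the $A_{\bk''}$, the degenerate case in which $z_0$ fails to be a simple root of $f(\bt',\mathbf0,y)$ (where implicit differentiation must be replaced by a short Puiseux/jet argument), and the elementary estimate on the number of terms of a sum of fewnomial ratios — are routine.
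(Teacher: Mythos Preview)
The overall architecture matches the paper's, but two steps you treat as routine are in fact the crux, and as written they contain genuine gaps.

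First, the Diophantine approximation of \prettyref{sec:approx} does \emph{not} give the ``exact identity'' $g(x)=\sum_{|\bk''|\le N}A_{\bk''}(\cdot)\,x^{\bk''\cdot\bn''}$. Applied as in \prettyref{prop:applied-s-units}, \prettyref{lem:s-units} yields only the alternative: either $g(x)$ lies in the $\C$-span of the finitely many terms $\tilde\alpha_{\bk}x^{\bk\cdot\bn}$ with $|\bk|\le L$, or $n_p\ge\varepsilon'(d,l,\varepsilon)\,n_l$. The obstruction you yourself flag---that the height of the truncation grows with $N$---is exactly what prevents the tail from being forced to zero; the lemma trades exactness for mere $\C$-linear dependence, with a priori unknown coefficients $\lambda_{\bk}$. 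Relatedly, a single pigeonhole does not produce a gap with $\varepsilon''$ below the threshold $\varepsilon'(d,l,\varepsilon')$, since that threshold itself depends on $\varepsilon'$; the paper instead runs a secondary reverse induction on $p$, iteratively setting $\varepsilon_p=\varepsilon'(d,l,\varepsilon_{p+1})$.

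Second, and more seriously, the claim that every $A_{\bk''}$ is a rational function of $\bt'$ and $z_0=z(\bt',\mathbf0)$ fails in the very degenerate case you defer. Regularity at $P$ (i.e.\ $z\in\C[[t_1,\dots,t_l]]$) does \emph{not} force $\partial_y f(\bt',\mathbf0,z_0)\neq0$: for instance $f=(y-t_1)^2-t_2^2(1+t_1+t_2)$ has $z=t_1+t_2\sqrt{1+t_1+t_2}\in\C[[t_1,t_2]]$, yet $z_0=t_1$ is a double root of $f(t_1,0,y)$ and the first coefficient $A_1=\sqrt{1+t_1}$ is not in $\C(t_1,z_0)=\C(t_1)$. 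No ``short Puiseux/jet argument'' puts the coefficients back into $\C(\bt',z_0)$; they generate a strictly larger field. This breaks your route to the fewnomial-ness of $\mathrm{Tr}(A_{\bk''})$, which you base on the traces of powers of $z_0$. The paper handles this by working with an abstract primitive element $\alpha$ of $F_L=\C(\bt')(\alpha_{\bk}:|\bk|\le L)$ whose minimal polynomial over $\C(\bt')$ has bounded degree (\prettyref{prop:prim-degree-bound}), and then in \prettyref{prop:gx-lin-dep} distinguishes whether or not the degree of $\tilde\alpha$ drops after specialisation.

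That said, your Galois-averaging idea can be repaired and is then a genuine alternative to \prettyref{sec:lin-dep}. Start from the $\C$-linear dependence $g(x)=\sum_{|\bk|\le L}\lambda_{\bk}\tilde\alpha_{\bk}x^{\bk\cdot\bn}$ that the lemma actually gives; averaging over $\mathrm{Gal}(\tilde F_L^{\mathrm{gal}}/\C(x))$ replaces each $\tilde\alpha_{\bk}$ by $[\C(x,\tilde\alpha_{\bk}):\C(x)]^{-1}\,\mathrm{Tr}_{\C(x,\tilde\alpha_{\bk})/\C(x)}(\tilde\alpha_{\bk})$, a coefficient of the minimal polynomial of $\tilde\alpha_{\bk}$ over $\C(x)$. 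That minimal polynomial divides $q_{\bk}(x^{n_1},\dots,x^{n_p},y)$ with $\deg q_{\bk}$ bounded by \prettyref{lem:degree-bound}, so the inductive hypothesis at level $p$ (via \prettyref{thm:main-factor}) makes its coefficients fewnomial ratios. This bypasses the case analysis of \prettyref{prop:gx-lin-dep}, but it requires abandoning $z_0$ in favour of the individual $\alpha_{\bk}$'s---which is not the argument you wrote.
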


\begin{note}
  As in \prettyref{note:deductions-l}, the following deduction is
  valid at every single $l$.
\end{note}

\begin{proof}[Deduction of \prettyref{prop:main} from
  \prettyref{prop:main-regular}]
  We work by induction on $l$. In particular, if $n_i = 0$ for some
  $i$, we may specialize the variable $t_i$ to $1$; if $l > 1$, we
  conclude by inductive hypothesis, while if $l = 1$, we simply note
  that we actually have $g(x) \in \C$. Therefore, we may assume that
  $n_i \neq 0$ for all $i = 1, \dots, l$. Moreover, we may replace $f$
  by an irreducible factor, and therefore assume directly that $f$ is
  irreducible.

  By \prettyref{lem:invertible}, either $P$ lies on $l$ distinct
  divisors $D_1, \dots, D_l$ such that the matrix
  $(v_{D_i}(t_j))_{i,j}$ is invertible, or there is a relation
  $h_1n_1 + \dots + h_ln_l = 0$ with integers $h_i$ not all zero and
  such that $|h_i| \leq (d^{l}l)^{l}$. In the latter case, we must
  have $l > 1$, and we may conclude by \prettyref{lem:linear-dep-exp}
  and the inductive hypothesis. Therefore, we may assume to be in the
  former case.

  Let $u_1, \dots, u_l$ be new independent variables, and set
  $$
  f_1(u_1, \dots, u_l, y) := f\left(u_1^{v_{D_1}(t_1)} \cdot \dots
    \cdot u_l^{v_{D_1}(t_l)}, \dots, u_1^{v_{D_l}(t_1)} \cdot \dots
    \cdot u_l^{v_{D_l}(t_l)}, y\right).
  $$
  Note that $f_1$ is a polynomial of degree at most $d^{l+1}l$ in each
  variable.

  Let $(r_{i,j})_{i,j}$ be the inverse matrix of
  $(v_{D_i}(t_j))_{i,j}$ multiplied by its determinant $\Delta$, so
  that its coefficients are all in $\Z$.  Let
  $m_i := r_{1,i}n_1 + \dots + r_{l,i}n_l$. By construction, we have
  $$
  f_1(x^{m_1}, \dots, x^{m_l}, g(x^{\Delta})) = f(x^{n_1\Delta},
  \dots, x^{n_l\Delta}, g(x^{\Delta})) = 0.
  $$
  In turn, we choose an irreducible factor $f_2$ of $f_1$ such that
  \begin{equation}
    f_2(x^{m_1}, \dots, x^{m_l}, g(x^{\Delta})) = 0.\label{eq:f2}
  \end{equation}

  We claim that $m_1, \dots, m_l, g(x^{\Delta})$ form a regular
  solution of this equation in the sense of
  \prettyref{def:regular}. Indeed, we may now assume that
  $u_1, \dots, u_l$ are algebraic functions in some algebraic closure
  of $\C(t_1, \dots, t_l, z)$ such that
  $$
  t_i = u_1^{v_{D_i}(t_1)} \cdot \dots \cdot u_l^{v_{D_i}(t_l)}
  $$
  and moreover $f_2(u_1, \dots, u_l, z) = 0$. Let $W'$ be a projective
  non-singular model of the function field $\C(u_1, \dots, u_l, z)$,
  equipped with a surjective, finite map $\pi : W' \to W$. As at the
  beginning of the section, we may apply some blow-ups and assume that
  all the components of the divisors of the functions $u_i$ are
  non-singular and have normal crossings.

  Let $\phi' : \Pb_1 \to W'$ be the unique non-constant map such that
  $u_i \circ \phi' = x^{m_i}$ and $z \circ \phi' = g(x^{\Delta})$. Let
  $P' := \phi'(0) \in W'$. Since by construction
  $\pi \circ \phi' = x^{n_i\Delta}$, it follows at once that
  $\pi(P') = P$. For $i = 1, \dots, l$, let $E_i$ be a component of
  $\pi^{*}(D_i)$ on which $P'$ lies.

  Finally, note that
  $u_i^{\Delta} = t_1^{r_{1,i}} \cdot \dots \cdot t_l^{r_{l,i}}$. In
  particular, $u_i^{\Delta}$ can be factored as $u_i' \circ \pi$,
  where $u_i'$ is a function on $W$. By construction, we have
  $v_{D_i}(u_j') = \Delta\delta_{ij}$, where $\delta_{ij}$ is the
  Kronecker delta. Let $w_1, \dots, w_l$ be local parameters of
  $D_1, \dots, D_l$, so that $u_i' \sim w_i^{\Delta}$ (in the sense of
  analytic equivalence). It follows at once that
  $u_i \sim w_i \circ \pi$; since the function field extension is
  generated by the functions $u_i$, each $w_i \circ \pi$ is also a
  local parameter of $E_i$, and in particular, $u_i$ is a local
  parameter of $E_i$. Since the divisors $E_i$ have normal crossings,
  this means that $u_1, \dots, u_l$ are local parameters at $P'$.

  In turn, $m_1, \dots, m_l, g(x^{\Delta})$ form a regular solution of
  (\ref{eq:f2}), so $g(x^{\Delta})$ can be written as the ratio of two
  polynomials with at most $B_8(d^{l+1}l,l)$ terms. By
  \prettyref{lem:ratio-n-to-1}, $g(x)$ can also be written as the
  ratio of two polynomials with at most $B_8(d^{l+1}l,l)$, concluding
  the argument.
\end{proof}

\section{From multivariate expansions to algebraic approximations}
\label{sec:multivariate}

We now start our argument towards the proof of
\prettyref{prop:main-regular}. From now up to
\prettyref{sec:induction}, assume that we are working under the
assumptions of \prettyref{prop:main-regular}, and in particular that
$n_1,\dots,n_l \in \N^{*}$ and $g(x) \in \C[x]$ form a regular
solution of $f(x^{n_1}, \dots, x^{n_l}, g(x)) = 0$, where
$f \in \C[t_1,\dots,t_l,y] \setminus \C$ is irreducible, monic in $y$,
and of degree at most $d$ in each variable.

Under the notation of \prettyref{sec:regular}, regularity means that
$t_1, \dots, t_l$ are local parameters at $P = \phi(0)$, which means
that there is an embedding of the regular functions at $P$ into
$\C[[t_1, \dots, t_l]]$. Since moreover the function $z$ is integral
over $\C[t_1, \dots, t_l]$, we obtain an embedding
$$
\C[t_1, \dots, t_l, z] \hookrightarrow \C[[t_1, \dots, t_l]].
$$
The fairly trivial, but crucial observation, is that for any
$p = 0, \dots, l - 1$ we can also rewrite
$$
\C[[t_1, \dots, t_l]] \cong \C[[t_1, \dots, t_p]][[t_{p+1}, \dots,
t_l]].
$$

Therefore, fix one such $p = 0, \dots, l - 1$. We write $\bt$ for the
vector $(t_{p+1}, \dots, t_l)$. If $\bk$ is a vector of integers
$\bk = (k_{p+1}, \dots, k_l)$, we write
$\bt^{\bk} := t_{p+1}^{k_{p+1}} \cdot \dots \cdot t_l^{k_l}$. With
this notation, the above embedding yields a (unique) expansion
$$
z = \sum_{\bk \in \N^{l - p}} \alpha_{\bk}\bt^{\bk},
$$
where $\alpha_{\bk} \in \C[[t_1, \dots, t_p]]$. Recall that
\begin{equation}
  f\bigg(t_1, \dots, t_l, \sum_{\bk \in \N^{l - p}}
  \alpha_{\bk}\bt^{\bk}\bigg) = 0. \label{eq:series-t}
\end{equation}

We now specialize the above expansion along the curve $\phi(\Pb_1)$
and pull it back to $\Pb_1$. Recall that the ring of functions on
$\Pb_1$ that are regular at the origin can be embedded (uniquely) into
$\C[[x]]$. Under this embedding, the specialization and the pullback
simply mean that we specialize at $t_i = x^{n_i}$ term by term.

We first specialize at $t_i = x^{n_i}$ for $i = 1, \dots, p$. Since
$n_i \neq 0$ for all $i$, each series $\alpha_{\bk}$ converges at
$t_i = x^{n_i}$ to a series $\tilde{\alpha}_{\bk} \in \C[[x]]$, and we
have
\begin{equation}
  f\bigg(x^{n_1}, \dots, x^{n_p}, t_{p+1}, \dots, t_l, \sum_{\bk \in
    \N^{l - p}} \tilde{\alpha}_{\bk}\bt^{\bk}\bigg) = 0.\label{eq:series-x-t}
\end{equation}

Likewise, we can further specialize at $t_{i} = x^{n_{i}}$ for
$i = p+1, \dots, l$. We then obtain the expansion
\begin{equation}
  g(x) = \sum_{\bk \in \N^{l - p}} \tilde{\alpha}_{\bk}x^{\bk \cdot
    \bn}\label{eq:pseudo-analytic},
\end{equation}
where $\bn = (n_{p+1}, \dots, n_{l})$ and $\bk \cdot \bn$ denotes the
usual scalar product. Note that such expansion is convergent again
since $n_{i} \neq 0$ for all $i$.

Equation (\ref{eq:pseudo-analytic}) yields an expansion of $g(x)$
resembling an analytic expansion, but with coefficients that are
themselves functions of $x$, providing the first ingredient towards
the proof of \prettyref{prop:main-regular}. We now use
(\ref{eq:series-t}) and (\ref{eq:series-x-t}) to deduce some bounds on
the coefficients $\alpha_{\bk}$ and $\tilde{\alpha}_{\bk}$.

\begin{prop}\label{prop:degree-coeffs}
  The coefficients $\alpha_{\bk}$ generate a finite extension of
  degree at most $d$ of $\C(t_1, \dots, t_p)$. Similarly, the
  coefficients $\tilde{\alpha}_{\bk}$ generate a finite extension of
  degree at most $d$ of $\C(x)$.
\end{prop}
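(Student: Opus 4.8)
The plan is to work entirely with the algebraic relation $f(t_1,\dots,t_l,z)=0$ and the expansion $z = \sum_{\bk} \alpha_{\bk}\bt^{\bk}$, exploiting that $f$ is monic of degree at most $d$ in $y$. The key point is that the field $L := \C(t_1,\dots,t_p)(\{\alpha_{\bk} : \bk \in \N^{l-p}\})$ is a subfield of the $y$-coordinate ring generated by a single root $z$ of $f$, hence has degree at most $d$ over $\C(t_1,\dots,t_l)$; but we want degree at most $d$ over the smaller field $\C(t_1,\dots,t_p)$, so we must argue that adjoining $t_{p+1},\dots,t_l$ does not help.

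First I would observe that the coefficients $\alpha_{\bk}$ all lie in $\C(t_1,\dots,t_l)(z)$, since each $\alpha_{\bk}$ is obtained from $z$ by a finite sequence of partial-derivative-and-evaluate operations (Taylor coefficients with respect to $\bt$), which are algebraic over $\C(t_1,\dots,t_l,z) = \C(t_1,\dots,t_l)(z)$ and in fact lie in the completion $\C[[t_1,\dots,t_l]]$; combining with the fact that each $\alpha_{\bk}$ is already in $\C[[t_1,\dots,t_p]]$ and algebraic over $\C(t_1,\dots,t_p)$ (as noted just before the proposition), we get $\alpha_{\bk} \in \C(t_1,\dots,t_p)^{\mathrm{alg}} \cap \C(t_1,\dots,t_l)(z)$. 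Now set $K := \C(t_1,\dots,t_p)$ and let $M := K(\{\alpha_{\bk}\})$ be the (a priori infinite) compositum inside a fixed algebraic closure. Since $M \subseteq \C(t_1,\dots,t_l)(z)$ and $t_{p+1},\dots,t_l$ are transcendental over $M$ (being transcendental over $K$ and $M/K$ algebraic), we have $[M(t_{p+1},\dots,t_l) : K(t_{p+1},\dots,t_l)] = [M:K]$; but $M(t_{p+1},\dots,t_l) \subseteq \C(t_1,\dots,t_l)(z)$, and the latter has degree $\le d$ over $\C(t_1,\dots,t_l) = K(t_{p+1},\dots,t_l)$ because $f$ is monic of degree $\le d$ in $y$. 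Hence $[M:K] \le d$, which is the first assertion.

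For the second assertion, about the specialisations $\tilde\alpha_{\bk} \in \C[[x]]$ obtained by setting $t_i = x^{n_i}$ for $i=1,\dots,p$: I would argue by specialising the primitive element. Pick a primitive element $\beta$ for $M/K$, say $\beta = \sum c_{\bk}\alpha_{\bk}$ a $\C$-linear combination of finitely many $\alpha_{\bk}$ (a primitive element exists since $M/K$ is finite separable, char $0$); let $\tilde\beta := \sum c_{\bk}\tilde\alpha_{\bk}$ be the corresponding specialisation, and let $p_\beta(Y) \in K[Y]$ be the minimal polynomial of $\beta$, of degree $[M:K] \le d$. Applying the homomorphism $t_i \mapsto x^{n_i}$ ($i \le p$) — which is well-defined on the relevant local rings since the $n_i$ are nonzero, exactly as used to pass from \eqref{eq:series-t} to \eqref{eq:series-x-t} — we get that $\tilde\beta$ satisfies the specialised polynomial $\tilde p_\beta(Y) \in \C(x)[Y]$, still of degree $\le d$; and each $\tilde\alpha_{\bk}$ is a polynomial in $\beta$ over $K$, so its specialisation $\tilde\alpha_{\bk}$ is the corresponding polynomial in $\tilde\beta$ over $\C(x)$, hence lies in $\C(x)(\tilde\beta)$, a field of degree $\le d$ over $\C(x)$. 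Therefore $\C(x)(\{\tilde\alpha_{\bk}\}) \subseteq \C(x)(\tilde\beta)$ has degree $\le d$ over $\C(x)$.

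The step I expect to be the main obstacle is the clean justification that the Taylor-coefficient operation keeps us inside $\C(t_1,\dots,t_l)(z)$ — i.e.\ that each $\alpha_{\bk}$ really lies in the function field and not merely in its completion — together with the bookkeeping needed to make the specialisation $t_i \mapsto x^{n_i}$ interact correctly with a chosen primitive element and its minimal polynomial (one must check no denominator of $p_\beta$ or of the expressions for $\alpha_{\bk}$ in terms of $\beta$ vanishes under the specialisation, which again follows from $n_i \neq 0$ but requires care). Everything else is standard field theory: transcendence degree arguments, the primitive element theorem, and the degree bound coming from monicity of $f$ in $y$.
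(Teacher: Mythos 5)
Your first step---the claim that every $\alpha_{\bk}$ lies in $\C(t_1,\dots,t_l)(z)$---is false, and the whole first half of your argument rests on it. A Taylor coefficient is a derivative \emph{followed by evaluation at} $\bt=0$; the derivative does stay in the function field, but the evaluation does not: it lands in a residue field, which is an algebraic extension of $\C(t_1,\dots,t_p)$ that in general does not embed into $\C(t_1,\dots,t_l,z)$. Concretely, take $l=2$, $p=1$, $f=y^{2}-1-t_1-t_2$ and the branch $z=\sqrt{1+t_1+t_2}\in\C[[t_1,t_2]]$ with $z(0,0)=1$. Then $\alpha_{0}=z|_{t_2=0}=\sqrt{1+t_1}$, whereas $\C(t_1,t_2,z)=\C(t_1,z)$ is purely transcendental over $\C$ (because $t_2=z^{2}-1-t_1$), and $\C(t_1)$ is algebraically closed in $\C(t_1,z)$, so $\sqrt{1+t_1}\notin\C(t_1,t_2,z)$. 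Being algebraic over $\C(t_1,\dots,t_l,z)$ and lying in the completion $\C[[t_1,\dots,t_l]]$ does not force membership in the function field, so the transcendence-degree descent $[M(t_{p+1},\dots,t_l):K(t_{p+1},\dots,t_l)]=[M:K]\le d$ never gets off the ground. (In the example the conclusion of the proposition still holds, with $M=\C(t_1)(\sqrt{1+t_1})$ of degree $2=d$; but the bound cannot come from a containment in $\C(t_1,\dots,t_l)(z)$.)

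The second half inherits this gap, since it uses a primitive element of $M/K$, and it has a further problem of its own: the assertion that no denominator (of the minimal polynomial of $\beta$, or of the expressions of the $\alpha_{\bk}$ in terms of $\beta$) dies under $t_i\mapsto x^{n_i}$ ``because $n_i\neq0$'' is not correct. A nonzero denominator such as $t_1-t_2$ vanishes identically under the substitution as soon as $n_1=n_2$; in general denominators can vanish precisely when the exponents satisfy a $\Z$-linear relation, and this is why the paper treats that possibility as a genuine alternative outcome only later, in \prettyref{prop:gx-lin-dep}. The paper's own proof of the present proposition sidesteps all of this with a conjugate-counting argument: if the $\alpha_{\bk}$ generated an extension of degree greater than $d$ (or a non-algebraic one), there would be at least $d+1$ distinct embeddings over $\C(t_1,\dots,t_p)$; extending them coefficientwise to power series in $t_{p+1},\dots,t_l$ and applying them to (\ref{eq:series-t}) produces at least $d+1$ distinct roots of $f$ in $y$, contradicting $\deg_y f\le d$, and the same argument applied to (\ref{eq:series-x-t}) handles the $\tilde{\alpha}_{\bk}$, monicity of $f$ in $y$ guaranteeing the specialised equation is nontrivial. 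If you want to rescue your route, you would have to replace ``$\alpha_{\bk}\in\C(t_1,\dots,t_l)(z)$'' by ``$\alpha_{\bk}$ lies in the residue field of a suitable place of $\C(t_1,\dots,t_l,z)$ above $\bt=0$'' and invoke the inequality between residue degree and extension degree, which is essentially a different, and for $l-p>1$ more delicate, argument.
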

\begin{proof}
  Suppose that the coefficients generate either an algebraic extension
  of degree greater than $d$, or a non-algebraic extension. In both
  cases, we can apply Galois automorphisms over $\C(t_1, \dots, t_p)$
  to find at least $d+1$ distinct sequences of coefficients. In turn,
  such automorphisms extend naturally to $\C[[t_1, \dots, t_l]]$ by
  leaving $t_{p+1}, \dots, t_l$ fixed. Applying the automorphisms to
  (\ref{eq:series-t}), we find that the degree of $f$ in the last
  variable should be at least $d+1$, a contradiction. The conclusion
  for the coefficients $\tilde{\alpha}_{\bk}$ can be proved with a
  similar argument applied to (\ref{eq:series-x-t}) (just recall that
  $f$ is monic in $y$, so it does not become trivial when specializing
  $t_i = x^{n_i}$).
\end{proof}

For $L \in \N$, let $F_L$ be the field generated by
$\{\alpha_{\bk} \,:\, |\bk| \leq L\}$ over $\C(t_1, \dots, t_p)$,
where $|\bk|$ is the $1$-norm of $\bk \in \N^{l-p}$, and
$F_{\infty} := \bigcup_{L \in \N}F_L$. By
\prettyref{prop:degree-coeffs}, $[F_{\infty}:\C(x)] \leq
d$. Similarly, let $\tilde{F}_L$ be the field generated by
$\{\tilde{\alpha}_{\bk} \,:\, |\bk| \leq L\}$ over $\C(x)$, and
$\tilde{F}_{\infty} := \bigcup_{L \in \N}F_L$. Again,
$[\tilde{F}_{\infty}:\C(x)] \leq d$. Let $h$ be the logarithmic height
of the function field $\tilde{F}_{\infty}/\C$ normalized so that
$h(x) = [\tilde{F}_{\infty}:\C(x)] \leq d$.

\begin{lem}\label{lem:degree-bound}
  Let $\bk \in \N$. Let $q_{\bk} \in \C[t_1, \dots, t_p, y]$ be an
  irreducible polynomial such that
  $$
  q_{\bk}(t_1, \dots, t_p, \alpha_{\bk}) = 0.
  $$
  If $\alpha_{\bk} \neq 0$, then the degree of $q_{\bk}$ in each
  variable is at most $C_1 \cdot |\bk|$ for a suitable
  $C_1 = C_1(d, l)$.
\end{lem}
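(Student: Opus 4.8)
The plan is to realise $\alpha_{\bk}$ as the value at $\bt=\mathbf 0$ of a Taylor coefficient of $z$ in the variables $\bt=(t_{p+1},\dots,t_l)$, to write that coefficient as an explicit rational function of $t_1,\dots,t_l$ and $z$ whose degrees grow only linearly in $|\bk|$, and then to bound the minimal polynomial of $\alpha_{\bk}$ over $\C(t_1,\dots,t_p)$ by a resultant computation.

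First I would set $\gamma_{\bk}:=\partial_{t_{p+1}}^{k_{p+1}}\cdots\partial_{t_l}^{k_l}z$, so that $\alpha_{\bk}=\frac{1}{\bk!}\,\gamma_{\bk}|_{\bt=\mathbf 0}$, the restriction being taken in the completed local ring at $P$, which by regularity is $\C[[t_1,\dots,t_l]]$. Differentiating $f(t_1,\dots,t_l,z)=0$ and using that $f$ is separable in $y$, an induction on $|\bk|$ gives, for $|\bk|\geq 1$,
\[
  \gamma_{\bk}=\frac{P_{\bk}(t_1,\dots,t_l,z)}{f_y(t_1,\dots,t_l,z)^{\,2|\bk|-1}},\qquad P_{\bk}\in\C[t_1,\dots,t_l,y],
\]
where each differentiation multiplies the denominator by $f_y^2$ and raises the numerator degrees by a bounded amount, so $\deg_{t_i}P_{\bk},\deg_y P_{\bk}\leq c(d)\,|\bk|$. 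Reducing $P_{\bk}$ modulo $f$ (no denominators appear, $f$ being monic in $y$) and clearing $f_y(t,z)^{-1}$ through a Bézout relation $f_y(t,z)\,u(t,z)\equiv\Delta(t)\pmod f$ with $\Delta=\mathrm{disc}_y(f)$ and $\deg_{t_i}\Delta\leq 2d^2$, I can bring $\gamma_{\bk}$ to the form
\[
  \gamma_{\bk}=\frac{1}{\Delta(t)^{\,2|\bk|-1}}\sum_{i=0}^{e-1}R_{\bk,i}(t)\,z^i,\qquad e=\deg_y f\leq d,\quad\deg_{t_i}R_{\bk,i}\leq C(d,l)\,|\bk|.
\]

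Now setting $t_{p+1}=\dots=t_l=0$ and writing $t'=(t_1,\dots,t_p)$ yields
\[
  \bk!\,\Delta(t',\mathbf 0)^{\,2|\bk|-1}\,\alpha_{\bk}=\sum_{i=0}^{e-1}R_{\bk,i}(t',\mathbf 0)\,\alpha_{\mathbf 0}^{\,i}.
\]
By \prettyref{prop:degree-coeffs} the element $\alpha_{\mathbf 0}$ has an irreducible minimal polynomial $q_{\mathbf 0}\in\C[t',y]$ with $\deg_y q_{\mathbf 0}\leq d$ and, since $q_{\mathbf 0}$ divides $f(t',\mathbf 0,y)$, with $\deg_{t_i}q_{\mathbf 0}\leq d$. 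Provided $\Delta(t',\mathbf 0)\not\equiv 0$, the displayed identity exhibits $\alpha_{\bk}$ as a root of the nonzero polynomial
\[
  \mathrm{Res}_Y\Big(q_{\mathbf 0}(t',Y),\ \bk!\,\Delta(t',\mathbf 0)^{\,2|\bk|-1}\,y-\sum_{i=0}^{e-1}R_{\bk,i}(t',\mathbf 0)\,Y^i\Big)\in\C[t',y],
\]
a determinant of bounded size with entries of degree $O_d(|\bk|)$, hence of degree $\leq C_1(d,l)\,|\bk|$ in each of $t_1,\dots,t_p$ and of degree bounded in terms of $d$ in $y$. Since $q_{\bk}$ divides this resultant, the claim follows, enlarging $C_1$ so as to dominate also $\deg_y q_{\bk}$ and the finitely many cases $|\bk|\leq 1$.

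The step I expect to be the main obstacle is precisely the hypothesis $\Delta(t',\mathbf 0)\not\equiv 0$, i.e.\ that $f(t_1,\dots,t_p,\mathbf 0,y)$ is squarefree in $y$ over $\C(t')$, equivalently that the rational expression for $\gamma_{\bk}$ survives the specialisation $\bt=\mathbf 0$. This can genuinely fail — $g(0)$ may be a multiple root of $f(\mathbf 0,y)$, as discussed in \prettyref{sec:strategy} — so one must exploit the structure of the regular model built in \prettyref{sec:regular} (the normal-crossings resolution and the fact that $t_1,\dots,t_l$ are local parameters at $P$) to control the branch carrying $z$, either ruling this degeneracy out or replacing $f$ near that branch by a model squarefree in $y$, and checking that the degree bookkeeping above is unaffected. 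The remaining ingredients — the degree tracking in the differentiation recursion and the resultant estimate — are routine.
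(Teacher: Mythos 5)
Your argument stands or falls on the non-vanishing, after the specialisation $\bt=\mathbf 0$, of the quantity you divide by --- $\Delta(t',\mathbf 0)$ in your resultant identity, or $f_y(t',\mathbf 0,\alpha_{\mathbf 0})$ in the variant that clears denominators with $f_y$ itself --- and this is precisely the step you leave open, so the proof is incomplete exactly at the cases the lemma exists to cover. Regularity in the sense of \prettyref{def:regular} only says that $t_1,\dots,t_l$ are local parameters at the single point $P$ of the resolved model; it controls the branch of $z$ through $P$ and says nothing about the other $\deg_y f-1$ branches, while $\mathrm{disc}_y(f)$ sees all of them. For instance, for $f=y^2(y-1-t_1)-t_2$ (irreducible, monic in $y$) and $z$ the branch with $z\equiv 1+t_1$ on $\{t_2=0\}$, the functions $t_1,t_2$ are local parameters along that branch and $z\in\C[[t_1,t_2]]$, yet $\mathrm{disc}_y f(t_1,0)\equiv 0$ because of the double root $y=0$; your displayed identity then degenerates to $0=0$. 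Even the $f_y$-variant is not rescued by regularity alone: for $f=y^2-t_2^2(1+t_1)$ one has $z=t_2\sqrt{1+t_1}\in\C[[t_1,t_2]]$ with $t_1,t_2$ local parameters at the relevant point of the normalisation, while $f_y(t_1,0,\alpha_{\mathbf 0})\equiv 0$. The paper itself warns (\prettyref{sec:strategy}, \prettyref{sec:regular}) that after the reduction $g(0)$ may still be a non-simple root of $f(\mathbf 0,y)$. So ``replacing $f$ near that branch by a model squarefree in $y$'' is not a deferred technicality but the actual content of the proof, and one would additionally have to show that such a replacement keeps the degrees bounded in terms of $d$ and $l$, uniformly in $\bk$; nothing in the proposal does this.

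For contrast, the paper's proof takes a different and denominator-free route: in the base case $p=1$, $l-p=1$ it invokes the classical height estimate for the coefficients of an algebraic power series (\cite[Lem.\ V.5]{Mason1984}), which gives $h(\alpha_k)\le C(d)\,k$ together with $S$-integrality for a set $S$ of size bounded in terms of $d$ only --- the possible vanishing of $f_y$ along the specialisation is absorbed into the height and integrality bookkeeping rather than assumed away --- and the general case is reduced to this one by the generic substitutions $t_i\mapsto\xi_i t_1$, $t_j\mapsto\chi_j t_l$, followed by elimination of the $\chi_j$'s and a comparison of $q_{\bk}$ with the minimal polynomial of the specialised coefficient. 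If you wish to keep your more explicit derivative-and-resultant computation, you must prove the non-vanishing statement you flag, or carry out the bounded-degree squarefree replacement; as written, the argument does not establish the lemma.
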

\begin{proof}
  This is a classical result, although usually only stated for the
  case $p = 1$, and either with $l - p = 1$, or with additional
  assumptions on the derivative in $y$ of the polynomial $f$. For the
  sake of completeness, we sketch an argument that reduces the general
  case to $p = 1$, $l - p = 1$.

  Suppose $p = 1$ and $l - p = 1$. In this case, the vector $\bk$ is
  just a single natural number $k \in \N$ and the degree of $q_k$ in
  $t_1$ coincides with the logarithmic height of $\alpha_k$ in the
  function field $F_{\infty}/\C$ (upon choosing an appropriate
  normalization). Then the height of $\alpha_k$ is at most $C \cdot k$
  for a suitable $C = C(d)$; moreover, there is a finite set of places
  $S$, whose size can be bounded in terms of $d$ only, such that all
  the coefficients $\alpha_k$ are $S$-integral. (See for instance
  \cite[Lem.\ V.5]{Mason1984}, with the additional observation that
  the order of $\frac{\partial f}{\partial y}$ at $x = 0$ can also be
  bounded in terms of $d$ only.)

  The general case can be reduced to the above special case by
  specializing $t_i \mapsto \xi_i t_1$ for $i = 1, \dots, p$, and
  $t_j \mapsto \chi_j t_l$ for $j = p+1, \dots, l$, where
  $\xi_i, \chi_j \in \C$ are algebraically independent over the field
  of definition of $f$. If $\beta_{\bk}$ is the specialization of
  $\alpha_{\bk}$, the coefficient of $t_l^k$ in the specialized
  expansion is
  $$
  \gamma_k = \sum_{|\bk| = k} \beta_{\bk} {\bm \chi}^{\bk},
  $$
  where ${\bm \chi} = (\chi_{p+1}, \dots, \chi_l)$. By the previous
  argument, the height of $\gamma_k$ is bounded by $C \cdot k$, and
  $\gamma_k$ is $S$-integral for a suitable $S$ of size bounded in
  terms of $d$ only. Since $S$ is contained among the poles of the
  functions $\beta_{\bk}$, upon varying $\bm \chi$ one recovers that
  $S$ does not depend on $\bm \chi$. Using sufficiently many
  independent values of $\bm \chi$, one can eliminate $\bm \chi$ and
  obtain that the height of each $\beta_{\bk}$ is bounded by
  $C \cdot |S| \cdot |\bk|$. It now suffices to note that
  $$
  r_{\bk}(t_1, y) := q_{\bk}(t_1, \xi_2 t_1, \dots, \xi_p t_1, y)
  $$
  can be factored as $r_{\bk} = t_{1}^e r_{\bk}'$ for some $e \leq dl$
  and some irreducible $r_{\bk}' \in \C[t_1, y]$, and that
  $r_{\bk}'(t_1, \beta_{\bk}) = 0$, so
  $\deg_{t_1}(r_{\bk}') \leq C \cdot |S| \cdot |\bk|$. Since the
  degree of $q_{\bk}$ in each $t_i$ is at most
  $\deg_{t_1}(r_{\bk}') + e$, the conclusion follows at once.
\end{proof}

\begin{prop}\label{prop:prim-degree-bound}
  For all $L \in \N$, there is a primitive element $\alpha$ of
  $F_L/\C(t_{1}, \dots, t_{p})$ such that, if
  $q \in \C[t_1, \dots, t_p, y]$ is an irreducible polynomial such
  that
  $$
  q(t_1, \dots, t_p, \alpha) = 0,
  $$
  then the degree of $q$ in each variable is at most
  $C_2 = C_2(d,l) \cdot L$. Moreover, we may assume that $q$ is monic
  in $y$.
\end{prop}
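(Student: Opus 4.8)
The plan is to exhibit $\alpha$ as a \emph{generic} $\C$‑linear combination of only a \emph{few} of the coefficients $\alpha_{\bk}$, and then to control the partial degrees of its minimal polynomial over $K:=\C(t_1,\dots,t_p)$ by iterating the classical degree estimate for resultants. The crucial point, which is what keeps the final bound linear in $L$ rather than exponential, is that $F_L$ is in fact generated over $K$ by at most $\log_2 d$ of the $\alpha_{\bk}$ with $|\bk|\le L$: since $[F_L:K]\le d$ by \prettyref{prop:degree-coeffs}, if we adjoin the $\alpha_{\bk}$ one at a time and keep only those that strictly enlarge the current field, each retained step multiplies the degree by at least $2$, so there are at most $\log_2 d$ of them. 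Denote the retained ones by $\alpha_{\bk^{(1)}},\dots,\alpha_{\bk^{(r)}}$ with $r\le\log_2 d$ and $|\bk^{(j)}|\le L$.

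Before forming $\alpha$ I would pass to integral elements, so that minimality and monicity in $y$ come for free. By the classical input recalled in the proof of \prettyref{lem:degree-bound}, all the $\alpha_{\bk}$ are $S$‑integral over $\C[t_1,\dots,t_p]$ for one finite set of places $S$ whose size is bounded in terms of $d$ only; hence there is a nonzero $b\in\C[t_1,\dots,t_p]$, of degree bounded in terms of $d$ only, such that each $\beta_j:=b\,\alpha_{\bk^{(j)}}$ is integral over $\C[t_1,\dots,t_p]$. Now set $\alpha:=\sum_{j=1}^{r}\lambda_j\beta_j$, with $(\lambda_1,\dots,\lambda_r)\in\C^r$ chosen outside the finitely many hyperplanes on which a linear combination of the field generators $\beta_1,\dots,\beta_r$ fails to be a primitive element of $F_L/K$ (the usual characteristic‑zero primitive element argument). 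Then $\alpha$ is integral over $\C[t_1,\dots,t_p]$, so — that ring being integrally closed — its minimal polynomial $q(t_1,\dots,t_p,y)$ over $K$ is monic in $y$ with coefficients in $\C[t_1,\dots,t_p]$, and $\deg_y q=[F_L:K]\le d$. Moreover, by \prettyref{lem:degree-bound} together with the bound on $\deg b$, each $\beta_j$ has over $K$ a minimal polynomial $Q_j$ that is monic in $y$, of $y$‑degree $\le d$, and of degree at most $D$ in each $t_i$, with $D=C_3(d,l)\cdot L$.

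It remains to estimate $\deg_{t_i} q$. I would set $\gamma_1:=\beta_1$ and $\gamma_{j+1}:=\gamma_j+\lambda_{j+1}\beta_{j+1}$, so $\gamma_r=\alpha$, and let $R_j$ be the minimal polynomial of $\gamma_j$ over $K$ (monic in $y$). Since $\gamma_{j+1}-\lambda_{j+1}\beta_{j+1}=\gamma_j$ is a root of $R_j$ and $\beta_{j+1}$ is a root of $Q_{j+1}$, evaluating at $y=\gamma_{j+1}$ shows that $R_j(z)$ and $\lambda_{j+1}^{\deg_y Q_{j+1}}Q_{j+1}\big((y-z)/\lambda_{j+1}\big)$ (in the auxiliary variable $z$, over $K[y]$) have the common root $z=\gamma_j$; hence $R_{j+1}(y)$ divides the resultant
\[
\mathrm{Res}_z\Big(R_j(z),\ \lambda_{j+1}^{\deg_y Q_{j+1}}\,Q_{j+1}\big((y-z)/\lambda_{j+1}\big)\Big),
\]
which for generic $\lambda$'s is a nonzero polynomial in $y$. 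The Sylvester‑matrix bound for the degree of a resultant in an extra variable then gives the recursions $\deg_y R_{j+1}\le d\cdot\deg_y R_j$ and $\deg_{t_i}R_{j+1}\le (\deg_y R_j)\,D+d\cdot\deg_{t_i}R_j$; starting from $\deg_y R_1\le d$ and $\deg_{t_i}R_1\le D$ these unwind to $\deg_y R_r\le d^{\,r}$ and $\deg_{t_i}R_r\le r\,d^{\,r-1}D$. Since $q=R_r$ and $r\le\log_2 d$, we conclude $\deg_{t_i}q\le (\log_2 d)\,d^{\,\log_2 d-1}D\le C_2(d,l)\cdot L$, while $\deg_y q\le d$ (consistently with the cruder estimate $d^{\,r}$), which is exactly the assertion.

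The only step I expect to require real care is the very first one: recognising that $O(\log d)$ — rather than the $\sim L^{\,l-p}$ a priori present — coefficients $\alpha_{\bk}$ already generate $F_L$ over $K$. Without this reduction the $r$‑fold resultant would inflate the $t_i$‑degree by a factor exponential in $L$, and the desired linear bound would be lost. The remaining ingredients (the genericity of the $\lambda_j$, the reduction to integral generators via the bounded set $S$, and the resultant degree bookkeeping) are entirely standard.
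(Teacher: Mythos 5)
Your argument is correct in substance, but it takes a genuinely different route from the paper's. The paper simply takes a generic $\C$-linear combination of \emph{all} the $\alpha_{\bk}$ with $|\bk|\le L$ and disposes of the degree bound as ``elementary algebra'' from \prettyref{lem:degree-bound}; linearity in $L$ is not actually lost there, because all the $\alpha_{\bk}$ have their poles along a collection of divisors of size bounded in terms of $d$ only (the $S$-integrality recalled inside the proof of \prettyref{lem:degree-bound}), so the height of a sum of the $\sim L^{l-p}$ terms is still $O_{d,l}(L)$ and the symmetric-function computation of the minimal polynomial keeps every partial degree $O_{d,l}(L)$. You instead observe that at most $\log_2 d$ of the $\alpha_{\bk}$ already generate $F_L$ over $\C(t_1,\dots,t_p)$ (correct: each retained generator at least doubles the degree, which is at most $d$), and then make the ``elementary algebra'' explicit by an iterated resultant; your recursions $\deg_y R_{j+1}\le d\,\deg_y R_j$ and $\deg_{t_i}R_{j+1}\le(\deg_y R_j)D+d\,\deg_{t_i}R_j$ are right, the resultants are nonzero for the reason you give, and monicity in $y$ is correctly secured by first passing to integral elements, a point the paper leaves implicit. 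So your route buys a fully explicit bookkeeping (no symmetric functions of a sum of polynomially many terms), at the price of the $\log_2 d$-generator reduction, which is indeed what keeps your resultant tower from exploding; but your closing remark should be read as a limitation of the resultant method only, not of the paper's choice of primitive element.

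One intermediate claim is wrong as stated, though harmlessly so: from $S$-integrality with $\#S$ bounded in terms of $d$ you cannot conclude that a denominator $b$ of degree bounded in terms of $d$ only suffices, because the pole orders of $\alpha_{\bk}$ along the divisors in $S$ grow linearly in $|\bk|$ (already for $f=y^2-(1+t_1+t_2)$ one gets $\alpha_k=c_k(1+t_1)^{1/2-k}$). What is true, and all your argument uses, is that denominators can be cleared with $b$ of degree $O_{d,l}(L)$; the cleanest fix is to multiply each retained $\alpha_{\bk^{(j)}}$ by the $y$-leading coefficient of its minimal polynomial $q_{\bk^{(j)}}$, whose degree is at most $C_1L$ by \prettyref{lem:degree-bound}. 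This also avoids leaning on the $S$-integrality assertion, which the paper only sketches (and only in the specialised one-variable situation) inside the proof of that lemma. With that correction your $D=C_3(d,l)\cdot L$, and hence the final bound $C_2(d,l)\cdot L$, stand.
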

\begin{proof}
  By a classical argument of Galois theory, for sufficiently generic
  coefficients $\lambda_{\bk} \in \C$, the element
  $$
  \alpha = \sum_{|\bk| \leq L} \lambda_{\bk}\alpha_{\bk}
  $$
  generates $F_L$ over $\C(t_1, \dots, t_p)$. The conclusion then
  follows by \prettyref{lem:degree-bound} and some elementary algebra.
\end{proof}

\begin{prop}\label{prop:height-bound}
  For all $\bk \in \N^{l-p}$, if $\tilde{\alpha}_{\bk} \neq 0$, then
  either
  $h(\tilde{\alpha}_{\bk}) \leq C_1d^2(n_1 + \dots + n_p) \cdot
  |\bk|$, or there are integers $h_{1}, \dots, h_{p} \in \Z$, not all
  zero, such that $|h_{i}| \leq 2C_{1} \cdot |\bk|$ and
  $h_{1}n_{1} + \dots + h_{p}n_{p} = 0$.
\end{prop}
\begin{proof}
  By \prettyref{lem:degree-bound}, we have
  $$
  q_{\bk}(x^{n_1}, \dots, x^{n_p}, \tilde{\alpha}_{\bk}) =
  \tilde{q}_{\bk}(x, \tilde{\alpha}_{\bk}) = 0
  $$
  where $q_{\bk}$ has degree at most $C_1 \cdot |\bk|$ in each
  variable. Therefore, the specialized polynomial $\tilde{q}_{\bk}$
  has degree at most $C_{1}(n_{1} + \dots + n_{p}) \cdot |\bk|$ in
  $x$. If $\tilde{q}_{\bk} \neq 0$, then $h(\tilde{\alpha}_{\bk})$ is
  bounded by the height of the polynomial
  $q_{\bk}(x^{n_1}, \dots, x^{n_p}, y)$, and the first conclusion
  follows by an easy estimate of such height.

  Otherwise, if $\tilde{q}_{\bk}$ vanishes, then (at least) two
  distinct terms of $q_{\bk}$ become terms of the same degree in $x$
  when specialized. This immediately implies the second conclusion.
\end{proof}

\begin{cor}\label{cor:genus-bound}
  For all $L \in \N$, either the genus of $\tilde{F}_L$ is bounded by
  $C_3(n_1 + \dots + n_p) \cdot L$ for some $C_3 = C_3(d,l)$, or there
  are integers $h_{1}, \dots, h_{p} \in \Z$, not all zero, such that
  $|h_{i}| \leq 2C_{1} \cdot |\bk|$ and
  $h_{1}n_{1} + \dots + h_{p}n_{p} = 0$.
\end{cor}
\begin{proof}
  This follows at once from the estimate of
  \prettyref{prop:height-bound} and the basic theory of function
  fields (for instance, by counting the ramification points).
\end{proof}

\begin{rem}
  Recall that $\tilde{F}_L = \tilde{F}_{\infty}$ for all sufficiently
  large integers $L$. One can prove that this happens for all integers
  $L$ larger than a number dependent on $d$ only, showing that the
  above bound can be made independent of $L$ (as for
  \prettyref{lem:degree-bound}, this is usually proven only with
  additional assumptions on $f$). However, we will not need this
  additional uniformity.
\end{rem}

\section{Diophantine approximation}
\label{sec:approx}

Now that we have found a suitable expansion of $g(x)$ as a convergent
sum of algebraic functions, we proceed as in
\cite{Zannier2008}. Recall the following lemma.

\begin{lem}[\protect{\cite[Prop.\ 1]{Zannier2008}}]
  \label{lem:s-units}Let $E/\C$ be a function field in one variable,
  of genus $\mathfrak{g}$, $\varphi_{1},\ldots,\varphi_{n}\in E$ be
  linearly independent over $\C$ and $r \in \{0, 1, \dots, n\}$. Let
  $S$ be a finite set of places of $E$ containing all the poles of
  $\varphi_{1},\ldots,\varphi_{n}$ and also all the zeros of
  $\varphi_{1},\ldots,\varphi_{r}$. Further, put
  $\sigma=\sum_{i=1}^{n}\varphi_{i}$. Then
  \begin{equation}
    \sum_{v\in S}(v(\sigma)-\min_{i=1}^{n}v(\varphi_{i}))\le{n \choose 2}(\#S+2\mathfrak{g}-2)+\sum_{i=r+1}^{n}\deg(\varphi_{i}),\label{eq:s-units}
  \end{equation}
  where $\deg(\varphi_i) = [E:\C(\varphi_i)]$.
\end{lem}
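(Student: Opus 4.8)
My plan is to prove this by the classical Wronskian method (the function‑field analogue of the Mason/Brownawell–Masser bound for vanishing sums). Fix a non‑constant $x\in E$ — automatically separating since $\operatorname{char}\C=0$ — and let $D=d/dx$, a $\C$‑linear derivation of $E$ with $\ker D=\C$. As $\varphi_1,\dots,\varphi_n$ are linearly independent over $\C=\ker D$, the Wronskian $W:=\det\bigl(D^{\,i-1}\varphi_j\bigr)_{1\le i,j\le n}$ is a nonzero element of $E$; this is the one place where the linear‑independence hypothesis enters.

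Next I would run the standard local estimates. For a place $v$ of $E$ put $e_v:=v(dx)$, so that $e_v\ge 0$ off the finitely many poles of $x$ and $\sum_v e_v=2\mathfrak g-2$. A short induction on $a$ (in characteristic $0$ one has $v(df)\ge v(f)-1$, and passing from $df$ to $Df=df/dx$ costs $e_v$) gives $v(D^{\,a}f)\ge v(f)-a(1+e_v)$ for every $f\in E$; expanding $W$ by the Leibniz formula then yields
\[
v(W)\ \ge\ \sum_{j=1}^{n}v(\varphi_j)\ -\ \binom{n}{2}\,(1+e_v)^{+},\qquad (1+e_v)^{+}:=\max(1+e_v,0),
\]
together with the sharper bound $v(W)\ge\sum_j v(\varphi_j)$ at all places where $e_v=0$ and every $\varphi_j$ is a unit (using $W(g\varphi_1,\dots,g\varphi_n)=g^{\,n}W(\varphi_1,\dots,\varphi_n)$ to reduce to a Wronskian of regular functions). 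The step that brings in $\sigma$ is that the columns of $W$ are the jets $(\varphi_j,D\varphi_j,\dots,D^{\,n-1}\varphi_j)^{\!\top}$, so adding all columns to the $j$‑th one leaves $W$ unchanged; choosing for each $v$ an index $j(v)$ with $v(\varphi_{j(v)})=m_v:=\min_k v(\varphi_k)$ and replacing column $j(v)$ by $(\sigma,D\sigma,\dots,D^{\,n-1}\sigma)^{\!\top}$, the same Leibniz estimate gives $v(W)\ge v(\sigma)+\sum_{k\ne j(v)}v(\varphi_k)-\binom{n}{2}(1+e_v)^{+}$, i.e.
\[
v(\sigma)-m_v\ \le\ v(W)-\sum_{k=1}^{n}v(\varphi_k)+\binom{n}{2}(1+e_v)^{+}.
\]

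Finally I would sum this inequality over $v\in S$ and estimate the right‑hand side with global degree identities: $\sum_{\text{all }v}v(W)=0$ and $\sum_{\text{all }v}v(\varphi_k)=0$ for each $k$; for $k\le r$ all zeros and poles of $\varphi_k$ lie in $S$, so $\sum_{v\in S}v(\varphi_k)=0$, whereas for $k>r$ one has $\sum_{v\in S}v(\varphi_k)=-\#\{\text{zeros of }\varphi_k\text{ outside }S\}\ge-\deg(\varphi_k)$, which produces the summand $\sum_{i=r+1}^{n}\deg(\varphi_i)$ \emph{with coefficient $1$}; and the sharper Wronskian bound at the generic places, together with $\sum_v e_v=2\mathfrak g-2$, controls $-\sum_{v\in S}v(W)$ by $\binom{n}{2}(2\mathfrak g-2)$ plus a term proportional to $\#S$. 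Collecting everything gives
\[
\sum_{v\in S}\bigl(v(\sigma)-\min_i v(\varphi_i)\bigr)\ \le\ \binom{n}{2}(\#S+2\mathfrak g-2)+\sum_{i=r+1}^{n}\deg(\varphi_i).
\]
(One also uses the trivial remark that $v(\sigma)-m_v\ge 0$ at every place, by the ultrametric inequality, so the left‑hand sum is over nonnegative terms; the degenerate case $\#S=0$ forces $n=1$, $\varphi_1\in\C$, and is vacuous.) The genuine obstacle, I expect, is precisely this last step: to keep the coefficient of $\sum_{i>r}\deg(\varphi_i)$ equal to $1$ and to land on $\binom{n}{2}(2\mathfrak g-2)$ with the right constant, one must choose $x$ carefully — so that the zeros of $dx$ outside $S$, where $W$ may acquire poles, do not inflate $\#S$ — and carry out an exact count equivalent to the Riemann–Hurwitz/Plücker ramification formula for the morphism $[\varphi_1:\cdots:\varphi_n]\colon E\to\P_{n-1}$. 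The local jet estimates and the column manipulation, by contrast, are routine.
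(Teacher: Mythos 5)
The paper itself gives no proof of this lemma: it is quoted from \cite[Prop.\ 1]{Zannier2008}, and the method used there is precisely the Wronskian argument you set up (non-vanishing of $W$ from linear independence over $\C=\ker D$, the replacement of the column of index $j(v)$ attaining $\min_k v(\varphi_k)$ by the jets of $\sigma$, and a global degree count), so your strategy is the right one. However, there is a genuine gap, and it sits exactly where you flag it: the final summation is never carried out, and the local estimates you actually state are too weak to produce the constant $\binom{n}{2}(\#S+2\mathfrak{g}-2)$. Concretely, after summing your per-place inequality over $v\in S$ you must bound $\sum_{v\in S}v(W)+\binom{n}{2}\sum_{v\in S}(1+e_v)^{+}$, i.e.\ you must bound from below $v(W)$ at every place outside $S$. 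Your ``sharper bound'' covers only places with $e_v=0$ at which all $\varphi_j$ are units; at a zero of $dx$ outside $S$ your general estimate gives only $v(W)\ge -\binom{n}{2}(1+e_v)$ instead of the needed $v(W)\ge -\binom{n}{2}e_v$, and at the poles of $x$ it gives $v(W)\ge 0$ instead of the gain $v(W)\ge -\binom{n}{2}e_v=\binom{n}{2}|e_v|$, while $\sum_{v}e_v^{+}=2\mathfrak{g}-2+\deg$ of the polar divisor of $dx$ exceeds $2\mathfrak{g}-2$ for every choice of $x$. So as written the argument only yields $\binom{n}{2}(\#S+2\mathfrak{g}-2)$ plus an error term depending on the ramification and poles of $x$, and your proposed remedy (a ``careful choice of $x$'' plus a Pl\"ucker-type count) is not executed.

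The gap is fillable by a standard device which makes the count exact and requires no special choice of $x$: compare $W=W_x$ with the local Wronskians taken with respect to a uniformizer $t_v$ at each place. Since $D_{t_v}^k$ is a triangular combination of $D_x^0,\dots,D_x^k$ with top coefficient $(dx/dt_v)^k$, one has $W_{t_v}(\varphi_1,\dots,\varphi_n)=(dx/dt_v)^{\binom{n}{2}}W_x(\varphi_1,\dots,\varphi_n)$, hence $v(W_x)=v(W_{t_v})-\binom{n}{2}e_v$ for every $v$. The local Wronskian obeys $v(W_{t_v})\ge v(\sigma)+\sum_{k\ne j(v)}v(\varphi_k)-\binom{n}{2}$ (your column trick, now with cost exactly $\binom{n}{2}$ per place) and $v(W_{t_v})\ge 0$ whenever all $\varphi_j$ are regular at $v$, in particular at every $v\notin S$. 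Summing $v(W_x)=0$ in degree gives $\sum_{v}v(W_{t_v})=\binom{n}{2}(2\mathfrak{g}-2)$, whence $\sum_{v\in S}v(W_{t_v})\le\binom{n}{2}(2\mathfrak{g}-2)$; adding over $v\in S$ the inequality $v(\sigma)-m_v\le v(W_{t_v})-\sum_{k}v(\varphi_k)+\binom{n}{2}$ and using your (correct) treatment of $-\sum_{v\in S}\sum_k v(\varphi_k)\le\sum_{k>r}\deg(\varphi_k)$ gives the lemma with the stated constants. With this modification your proof closes; without it, the decisive step is missing.
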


A rather straightforward application of the above lemma to
(\ref{eq:pseudo-analytic}) yields the following (using the notations
of \prettyref{sec:multivariate}).

\begin{prop}\label{prop:applied-s-units}
  Suppose that $0 < n_1 \leq \dots \leq n_l$ and that
  $n_{p+1} \geq \varepsilon n_l$ for some given $\varepsilon >
  0$. Then at least one of the following holds:
  \begin{enumerate}
  \item $g(x)$ is $\C$-linearly dependent on the set
    $\{ \tilde{\alpha}_{\bk} \,:\, |\bk| \leq \lceil\frac{2d +
      1}{\varepsilon}\rceil\}$,
  \item $n_p \geq \varepsilon' n_l$ for some
    $\varepsilon' = \varepsilon'(d, l, \varepsilon)$,
  \item there are $h_{1}, \dots, h_{p} \in \Z$, not all zero, such
    that $|h_{i}| \leq 2C_{1}L$ and
    $h_{1}n_{1} + \dots + h_{p}n_{p} = 0$.
  \end{enumerate}
\end{prop}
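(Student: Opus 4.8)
The plan is to apply Lemma~\ref{lem:s-units} directly to the convergent expansion \eqref{eq:pseudo-analytic}, treating $g(x)$ as the sum $\sigma = \sum_{\bk} \tilde\alpha_{\bk} x^{\bk \cdot \bn}$ and the functions $\varphi_{\bk} = \tilde\alpha_{\bk} x^{\bk \cdot \bn}$ as the summands. The ambient function field $E$ will be $\tilde F_\infty$, which has degree at most $d$ over $\C(x)$ and, by Corollary~\ref{cor:genus-bound}, genus $\mathfrak{g} \leq C_3(n_1 + \dots + n_p) L$ once we truncate at $|\bk| \leq L$. The key point is a dichotomy built into the lemma: either many of the $\varphi_{\bk}$ with large $|\bk|$ are genuinely present (linearly independent over $\C$ with $g$ depending on all of them), in which case the right-hand side of \eqref{eq:s-units} is forced to be large because of the $\sum \deg(\varphi_{\bk})$ term, or only boundedly many are needed, which is alternative (1). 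To run this we must bound the left-hand side of \eqref{eq:s-units} from below in a way that beats the right-hand side unless $g$ lies in a small linear span.

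First I would fix $L := \lceil (4d+2)/\varepsilon \rceil$ and split the expansion as $g(x) = \sigma_{\leq L} + \sigma_{>L}$ where $\sigma_{\leq L} = \sum_{|\bk| \leq L} \tilde\alpha_{\bk} x^{\bk\cdot\bn}$. Suppose alternative (1) fails, so $g$ is \emph{not} in the $\C$-span of $\{\tilde\alpha_{\bk} : |\bk| \leq L\}$; then in particular $\sigma_{>L} \neq 0$, and there is some $\bk^*$ with $|\bk^*| > L$ contributing. The degree of $g(x)$ over $\C(x)$ is bounded: since $g \in \C[x]$ and $f$ is monic of degree at most $d$ in $y$, the pole of $g$ at $x=\infty$ has order $\deg g \leq d \cdot \max_i n_i \leq d n_l$. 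On the other hand, by Corollary~\ref{cor:error-bound} (whose hypothesis $n_p \leq \frac{1}{2C_1 d^2 l} n_{p+1}$ I would assume — this is precisely the negation of alternative (2) for a suitable $\varepsilon'$), each term $\tilde\alpha_{\bk} x^{\bk\cdot\bn}$ has order at least $|\bk| \cdot n_{p+1}/2 \geq |\bk| \cdot \varepsilon n_l / 2$ at $x = 0$, so if $|\bk| > L = \lceil(4d+2)/\varepsilon\rceil$ this order exceeds $(2d+1) n_l > \deg g + \text{(order of the tail)}$, which will be the source of a contradiction: every term of order exceeding the degree of $g$ at infinity must cancel against others of comparable order.

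The heart of the argument is then the counting: apply Lemma~\ref{lem:s-units} with $E = \tilde F_L$, with the $\varphi_i$ being those finitely many terms $\tilde\alpha_{\bk} x^{\bk\cdot\bn}$ ($|\bk| \leq N$ for a truncation level $N$ chosen so the tail is negligibly high-order at $x=0$), reordered so that the first $r$ are the ones already included at level $L$ and the remaining ones have $|\bk| > L$. The set $S$ contains the poles and the zeros of the first $r$ functions; using Proposition~\ref{prop:height-bound} and Corollary~\ref{cor:genus-bound}, $\#S$ and $\mathfrak{g}$ are both bounded by $O((n_1 + \dots + n_p) \cdot N) = O(n_p \cdot N)$. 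For the left-hand side of \eqref{eq:s-units}: at the place $x = 0$, $v(\sigma) = v(g)$ is bounded by a constant, while $\min_i v(\varphi_i) \leq -C_1 d^2 p n_p N + \dots$ is very negative once $N$ is large (the $\alpha_{\bk}$ with $|\bk| = N$ can have poles of order $\sim n_p N$ at the places of $S$); more carefully, summing $v(\sigma) - \min_i v(\varphi_i)$ over $v \in S$ picks up the full height contributions, giving a lower bound of order $n_{p+1} N$ from the terms with large $|\bk|$. Meanwhile $\sum_{i=r+1}^n \deg(\varphi_i) \leq n \cdot d \leq d \cdot \binom{N + l - p}{l-p}$, polynomial in $N$ with degree $l - p$, and $\binom{n}{2}(\#S + 2\mathfrak{g} - 2) = O(n_p N^{2(l-p)})$. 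The point is to choose the hypothesis on $n_{p+1}/n_p$ (which is what $n_p \leq \frac{1}{2C_1 d^2 l} n_{p+1}$ provides, up to adjusting constants) so that the left side, which grows like $n_{p+1} N$, outstrips the right side; but this clean comparison only works for $N$ in a bounded range, so really the argument should be: either the comparison already fails at $N = L$ (meaning $g$ lies in the span, alternative (1)), or running it at $N = L$ is consistent, and then the term-by-term cancellation forced by $\deg_\infty g \leq d n_l$ together with the lower order bound $|\bk| n_{p+1}/2$ at $0$ pins down exactly which $\tilde\alpha_{\bk}$ can survive.

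The main obstacle I anticipate is managing the interplay of the two valuations at $x = 0$ and $x = \infty$ cleanly: the terms $\tilde\alpha_{\bk} x^{\bk \cdot \bn}$ are constrained to be of high order at $0$ (by Corollary~\ref{cor:error-bound}) but $g$ itself has controlled order at $\infty$, and Lemma~\ref{lem:s-units} must be deployed so that the $S$-unit inequality sees both constraints. Concretely, the delicate step is showing that failure of alternative (1) forces the right-hand side of \eqref{eq:s-units} — in particular the $\sum_{i > r} \deg(\varphi_i)$ contribution — to be large enough to contradict the bound coming from genus and $\#S$, which in turn forces $n_p$ to be comparable to $n_l$, i.e.\ alternative (2) with an explicit $\varepsilon' = \varepsilon'(d, l, \varepsilon)$ extracted by tracking the constants $C_1, C_3$ through the inequality. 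This is exactly the kind of estimate carried out in \cite{Zannier2008} for the special case of multinomial expansions; the new feature here is only that the coefficients $\tilde\alpha_{\bk}$ are non-constant algebraic functions, but Propositions~\ref{prop:degree-coeffs} and~\ref{prop:height-bound} and Corollary~\ref{cor:genus-bound} supply precisely the height and genus control needed to make the same counting go through.
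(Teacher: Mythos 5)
Your setup assembles the right ingredients (the reduction to $n_p \leq \frac{1}{2C_1d^2l}n_{p+1}$, the choice $L=\lceil\frac{4d+2}{\varepsilon}\rceil$, the height/genus bounds of \prettyref{sec:multivariate}, and $\deg g\leq dn_l$), but the way you deploy \prettyref{lem:s-units} has a genuine gap: you never make $g(x)$ itself one of the summands $\varphi_i$. In the paper's proof the collection consists of a $\C$-basis $\varphi_1,\dots,\varphi_r$ of the span of $\{\tilde{\alpha}_{\bk}x^{\bk\cdot\bn}\,:\,|\bk|\leq L\}$ \emph{together with} $\varphi_n=-g(x)$, placed among the functions of index $>r$ (whose zeros need not lie in $S$ and whose degree, controlled via $\deg g\leq dn_l$ and $[F_L:\C(x)]\leq d$, is exactly what the $\sum_{i>r}\deg(\varphi_i)$ term on the right-hand side is for). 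Then either these functions are $\C$-linearly dependent, which gives alternative (1), or the lemma applies with $\sigma=\sum_{|\bk|\leq L}\tilde{\alpha}_{\bk}x^{\bk\cdot\bn}-g(x)=-\sum_{|\bk|>L}\tilde{\alpha}_{\bk}x^{\bk\cdot\bn}$: $\sigma$ is minus the tail, so by \prettyref{cor:error-bound} it vanishes at $x=0$ to order greater than $\frac{Ln_{p+1}}{2}\geq\frac{L\varepsilon}{2}n_l$, while $\min_i v_0(\varphi_i)\leq v_0(g)\leq dn_l$. That single place already makes the left-hand side of size roughly $\frac{L\varepsilon}{2}n_l$, and the right-hand side is $O(n_pL)$ times combinatorial constants plus the degree term, so the inequality forces $n_l\leq C(d,l,\varepsilon)\,n_p$, i.e.\ alternative (2). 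In your configuration the $\varphi_i$ are only expansion terms (up to some level $N$), so $\sigma$ is a partial sum of $g$, which has \emph{small} order at $0$; the resulting inequality does not mention $g$ at all and therefore cannot produce the dichotomy.

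Your two fallback ideas do not repair this. Truncating at a large level $N$ and casting the terms with $L<|\bk|\leq N$ in the role of the index-$>r$ functions fails, as you yourself note, because $\binom{n}{2}$, $\#S$ and the genus (now of $\tilde{F}_N$, not $\tilde{F}_L$, since those coefficients need not lie in $\tilde{F}_L$) grow superlinearly in $N$, so the inequality carries no information; and the claim that if ``the comparison already fails at $N=L$'' then ``$g$ lies in the span'' is a non sequitur, since an application of the lemma that does not include $g$ among the $\varphi_i$ says nothing about membership of $g$ in that span. Likewise the cancellation heuristic (``every term of order exceeding $\deg g$ must cancel'') is valid only when the coefficients $\tilde{\alpha}_{\bk}$ are constants, i.e.\ the case $p=0$ handled separately in \prettyref{sec:induction}; for non-constant algebraic coefficients the tail need not cancel term by term, and replacing that heuristic is precisely the job of the $S$-unit inequality with $-g$ inserted as an extra summand.
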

\begin{proof}
  We apply \prettyref{lem:s-units} with the following data.

  Let $L := \lceil\frac{2d + 1}{\varepsilon}\rceil$. Let
  $\varphi_1, \dots, \varphi_r$ be a $\C$-linear basis of the set
  $\{ \tilde{\alpha}_{\bk}x^{\bk \cdot \bn} \,:\, |\bk| \leq L \}$,
  and let $\varphi_{r+1} = \varphi_n = -g(x)$ (note that $r$ is at
  most the number of vectors $\bk \in \N^{l-p}$ such that
  $|\bk| \leq L$, which can be bounded in terms of $l$ and $L$
  only). Let $E := \tilde{F}_L$. Let $S$ be the set of zeroes and
  poles of the functions $\tilde{\alpha}_{\bk}x^{\bk \cdot \bn}$ and
  of the function $x$. Let $v_{0}$ be the place at $x = 0$ induced by
  the embedding of $E$ into $\C[[x]]$. Thanks to the observations of
  \prettyref{sec:multivariate}, we either get conclusion (3), or we
  have the following bounds:
  \begin{itemize}
  \item the zeroes and poles of each $\tilde{\alpha}_{\bk}$ are at
    most $2C_1d^2pn_pL$ by \prettyref{prop:height-bound};
  \item the zeroes and poles of $x^{\bk \cdot \bn}$, which include the
    poles of $g(x)$, are at most $2[F_L:\C(x)] \leq 2d$ by
    \prettyref{prop:degree-coeffs};
  \item the genus of $F_L$ is at most $C_3pn_pL$ by
    \prettyref{cor:genus-bound};
  \item $v_0(\sigma) > Ln_{p+1} \geq L\varepsilon n_l$ by the
    expansion (\ref{eq:pseudo-analytic}).
  \end{itemize}

  Finally, note that for all $v \in S$,
  $v(\sigma) - \min_{i = 1}^n v(\varphi_i)$ is non-negative, and that
  by simple degree considerations,
  $0 \leq v_0(g(x)), \deg(g(x)) \leq dn_l$. In particular,
  $\min_{i = 1}^n(v_0(\varphi_i)) \leq dn_l$.

  Applying \prettyref{lem:s-units} and using these bounds, we either
  reach conclusion (1), or we obtain the following inequality:
  $$
  L\varepsilon n_l - dn_l \leq \binom{n}{2} (2C_1d^2pn_pL +
  2d + 2C_3pn_pL) + dn_l,
  $$
  which in turn yields
  $$
  n_l \leq n_l \cdot \left(L\varepsilon - 2d\right) \leq \binom{n}{2}
  (2C_1d^2pL + 2d + 2C_3pL) \cdot n_p,
  $$
  proving conclusion (2).
\end{proof}

\section{The case of linear dependence}
\label{sec:lin-dep}

Note that outcome (1) of \prettyref{prop:applied-s-units} is that
$g(x)$ is $\C$-linearly dependent on
$\{\tilde{\alpha}_{\bk} \,:\, |\bk| \leq L\}$ for a certain
$L \in \N$. In this section, we study what happens when this is the
case.

Let $L \in \N$. Fix $\alpha$ to be a primitive element of
$F_L/\C(t_1, \dots, t_p)$ given by \prettyref{prop:prim-degree-bound},
with the corresponding irreducible polynomial
$q \in \C[t_1, \dots, t_p, y]$. Let also
$e = [F_L:\C(t_1, \dots, t_p)]$. Then for all $|\bk| \leq L$ we can
write
$$
\alpha_{\bk} = \sum_{i = 0}^{e-1} q_{i,\bk}\alpha^i
$$
where $q_{i,\bk} \in \C(t_1, \dots, t_p)$.

\begin{lem}
  The degree of $q_{i,\bk}$ in each variable is at most $C_4$ for some
  $C_4 = C_4(d,l, L)$.
\end{lem}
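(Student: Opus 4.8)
The plan is to recover the $q_{i,\bk}$ from an explicit linear system over $\C(t_1,\dots,t_p)$ whose data has degree controlled in terms of $d,l,L$, and then bound their degrees by Cramer's rule. Recall from \prettyref{prop:prim-degree-bound} that $\alpha$ is a primitive element of $F_L/\C(t_1,\dots,t_p)$ of degree $e\le d$ (by \prettyref{prop:degree-coeffs}) whose monic irreducible polynomial $q$ has degree at most $C_2L$ in each $t_i$. Hence, by Newton's identities, the power sums $s_j:=\mathrm{Tr}_{F_L/\C(t_1,\dots,t_p)}(\alpha^{j})$ for $0\le j\le 2e-2$ are polynomials in the coefficients of $q$, so their degree in each $t_i$ is bounded in terms of $d$ and $L$; moreover $\det\big((s_{i+m})_{0\le i,m\le e-1}\big)=\mathrm{disc}_y(q)\ne 0$ since we are in characteristic $0$ and $q$ is separable, and this discriminant again has degree bounded in terms of $d,L$ in each $t_i$. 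Multiplying the identity $\alpha_{\bk}=\sum_{i=0}^{e-1}q_{i,\bk}\alpha^{i}$ by $\alpha^{m}$ for $m=0,\dots,e-1$ and applying the trace gives the system
\[
  \sum_{i=0}^{e-1}s_{i+m}\,q_{i,\bk}=\mathrm{Tr}_{F_L/\C(t_1,\dots,t_p)}(\alpha^{m}\alpha_{\bk}),\qquad m=0,\dots,e-1,
\]
which is invertible. So by Cramer's rule each $q_{i,\bk}$ is a ratio whose denominator is $\mathrm{disc}_y(q)$ and whose numerator is a determinant built from the $s_{i+m}$ and the traces $\mathrm{Tr}(\alpha^{m}\alpha_{\bk})$; it therefore remains only to bound, in terms of $d,l,L$, the degree in each $t_i$ of these last traces.

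For that, I would use that $\alpha_{\bk}$ is a root of the irreducible $q_{\bk}\in\C[t_1,\dots,t_p,y]$ of degree at most $C_1|\bk|\le C_1L$ in each variable (\prettyref{lem:degree-bound}), while $\alpha$ is a root of $q$ as above. Eliminating an auxiliary variable $z$ between $q(t_1,\dots,t_p,z)$ and $y-z^{m}$ produces a nonzero polynomial vanishing at $\alpha^{m}$ of $y$-degree $\le me$ and of degree in each $t_i$ bounded in terms of $d,L$ (a resultant is homogeneous of bounded weight in the coefficients of $q$); taking a further resultant with $q_{\bk}$ yields a nonzero $\rho_m\in\C[t_1,\dots,t_p,y]$ vanishing at $\alpha^{m}\alpha_{\bk}$ of degree in each $t_i$ bounded in terms of $d,l,L$. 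The minimal polynomial of $\alpha^{m}\alpha_{\bk}$ over $\C(t_1,\dots,t_p)$ is then a factor of $\rho_m$, hence of no larger degree in each $t_i$, and $\mathrm{Tr}_{F_L/\C(t_1,\dots,t_p)}(\alpha^{m}\alpha_{\bk})$ is, up to the rational factor $e/\deg(\text{min.\ pol.})$, a ratio of two coefficients of that minimal polynomial; so it too has degree in each $t_i$ bounded in terms of $d,l,L$. Substituting these bounds into the Cramer expression and clearing denominators exhibits each $q_{i,\bk}$ as a ratio of two polynomials of degree bounded in terms of $d,l,L$ in each $t_i$; passing to lowest terms only decreases degrees, which gives the claim with a suitable $C_4=C_4(d,l,L)$.

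The only substantive step is the estimate on $\mathrm{Tr}_{F_L/\C(t_1,\dots,t_p)}(\alpha^{m}\alpha_{\bk})$ — equivalently, that $\alpha^{m}\alpha_{\bk}$ has bounded height — which is precisely the resultant degree computation above; everything else is linear algebra over $\C(t_1,\dots,t_p)$ together with Newton's identities, and I expect no real difficulty there. (Equivalently, one can package the whole argument as the statement that the coordinates of an element with respect to a fixed basis of $F_L/\C(t_1,\dots,t_p)$ have height bounded by the height of the element plus a constant depending only on the basis — here $h(\alpha_{\bk})$ is controlled by \prettyref{lem:degree-bound} and the basis $1,\alpha,\dots,\alpha^{e-1}$ by \prettyref{prop:prim-degree-bound} — whose proof is exactly the Cramer/trace computation just outlined.)
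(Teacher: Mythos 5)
Your argument is correct, and at bottom it is the same strategy the paper intends: view $\alpha_{\bk}=\sum_{i=0}^{e-1}q_{i,\bk}\alpha^i$ as a linear system for the coordinates $q_{i,\bk}$, solve it by Cramer's rule, and feed in the degree bounds of \prettyref{lem:degree-bound} and \prettyref{prop:prim-degree-bound}. The difference is in the matrix you invert. The paper applies the $e$ embeddings $\sigma$ of $F_L$ over $\C(t_1,\dots,t_p)$ and inverts the Vandermonde-type matrix $(\sigma(\alpha)^i)_{i,\sigma}$, so the needed inputs are just the degrees of $\alpha$, its conjugates and $\alpha_{\bk}$ (all controlled by the two cited results), with the symmetry in the conjugates guaranteeing that the resulting expressions lie in $\C(t_1,\dots,t_p)$ with bounded degree. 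You instead stay inside the base field by pairing with $\alpha^m$ and taking traces, so your Gram matrix is $(s_{i+m})$ with determinant $\mathrm{disc}_y(q)\neq 0$; this is tidier in that no passage to the algebraic closure is needed and Newton's identities bound the $s_j$ directly from $q$, but it costs you one extra ingredient, namely a degree bound for $\mathrm{Tr}(\alpha^m\alpha_{\bk})$, which you correctly supply via a resultant bound on the height of the product $\alpha^m\alpha_{\bk}$ (in effect re-deriving, for products, the kind of height estimate the conjugate approach gets for free). Both routes yield a bound of the required shape $C_4=C_4(d,l,L)$, and your write-up in fact fills in the ``elementary algebra'' that the paper leaves implicit; the only points worth a sentence in a final version are the nonvanishing of the auxiliary resultant $\rho_m$ (choose the standard product-resultant construction and discard spurious factors) and the convention that ``degree'' of the rational function $q_{i,\bk}$ means the degrees of numerator and denominator in lowest terms, which clearing denominators and reducing indeed controls.
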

\begin{proof}
  Let $\sigma : F_L \to \overline{\C(t_1, \dots, t_p)}$ be an
  embedding of $F_L$ into an algebraic closure of
  $\C(t_1, \dots, t_p)$. Then
  $$
  \sigma(\alpha_{\bk}) = \sum_{i = 0}^{e-1} q_{i,\bk}
  \sigma(\alpha)^i.
  $$
  Since the matrix $(\sigma(\alpha)^i)_{i,\sigma}$ is invertible, the
  desired bound follows from the bound of
  \prettyref{prop:prim-degree-bound} and some elementary algebra.
\end{proof}

\begin{prop}\label{prop:gx-lin-dep}
  Suppose that $g(x)$ is $\C$-linearly dependent on
  $\{ \tilde{\alpha}_{\bk}x^{\bk \cdot \bn} \,:\, |\bk| \leq
  L\}$. Assume that \prettyref{prop:main-regular} is true for $l =
  p$. Then either $h_1n_1 + \dots + h_pn_p = 0$ for some integers
  $h_1, \dots, h_p$, not all zero, such that $|h_i| \leq 2C_4$, or
  $g(x)$ can be written as the ratio of two polynomials with at most
  $C_5 = C_5(d,l,L)$ terms.
\end{prop}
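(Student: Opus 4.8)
The plan is to turn the linear-dependence hypothesis into an explicit closed formula for $g(x)$ inside the field $\C(x)(\tilde\alpha)$ and then to collapse that formula to a ratio of two fewnomials; the one non-obvious ingredient is the control of the minimal polynomial of $\tilde\alpha$ over $\C(x)$, which is exactly where the inductive hypothesis at $l=p$ enters. Spelling out the hypothesis, $g(x)=\sum_{|\bk|\le L}c_{\bk}\tilde\alpha_{\bk}x^{\bk\cdot\bn}$ with $c_{\bk}\in\C$. First I would insert the representation $\alpha_{\bk}=\sum_{i=0}^{e-1}q_{i,\bk}\alpha^{i}$ from the preceding lemma, write each $q_{i,\bk}=a_{i,\bk}/b_{i,\bk}\in\C(t_{1},\dots,t_{p})$ in lowest terms with $a_{i,\bk},b_{i,\bk}$ of degree at most $C_{4}$ in each variable, and specialise $t_{i}\mapsto x^{n_{i}}$ for $i=1,\dots,p$. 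There are two cases. If $b_{i,\bk}(x^{n_{1}},\dots,x^{n_{p}})\equiv0$ for some pair $(i,\bk)$, then two distinct exponent vectors occurring in $b_{i,\bk}$ have equal $\bn$-weight, which yields a nontrivial relation $k_{1}n_{1}+\dots+k_{p}n_{p}=0$ with $|k_{j}|\le2C_{4}$ --- the first alternative of the statement. Otherwise each $\tilde q_{i,\bk}(x):=a_{i,\bk}(x^{n_{1}},\dots,x^{n_{p}})/b_{i,\bk}(x^{n_{1}},\dots,x^{n_{p}})$ is a genuine ratio of two Laurent polynomials with boundedly many terms, and $\tilde\alpha_{\bk}=\sum_{i=0}^{e-1}\tilde q_{i,\bk}(x)\,\tilde\alpha^{i}$.

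Assume we are in this second case. The remaining object to understand is $\tilde\alpha$ itself, which is a root of $q(x^{n_{1}},\dots,x^{n_{p}},y)$: this polynomial is monic in $y$ of degree $e\le d$, and its coefficients are the coefficients of $q$ (of degree at most $C_{2}L$) specialised at $t_{i}=x^{n_{i}}$, hence fewnomials with boundedly many terms. Let $\mu(x,y)\in\C(x)[y]$ be the minimal polynomial of $\tilde\alpha$ over $\C(x)$, a monic factor of $q(x^{n_{1}},\dots,x^{n_{p}},y)$. Here I would invoke the inductive hypothesis: \prettyref{prop:main-regular} for $l=p$ gives, through the reductions of \prettyref{sec:regular} and \prettyref{sec:variations} which are valid at each fixed $l$ (cf.\ \prettyref{note:deductions-l}), the factorisation statement \prettyref{prop:main-factor} for $l=p$; applied to the factorisation $q(x^{n_{1}},\dots,x^{n_{p}},y)=\mu\cdot(q/\mu)$ with $\mu$ monic, it shows that the coefficients of $\mu$ are ratios of polynomials with at most $B_{5}(C_{2}L,p)$ terms. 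Consequently the power sums $p_{i}:=\mathrm{Tr}_{\C(x)(\tilde\alpha)/\C(x)}(\tilde\alpha^{i})$, being expressions of bounded weighted degree (via Newton's identities) in the coefficients of $\mu$, are themselves ratios of fewnomials with a number of terms bounded in terms of $d,l,L$.

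To conclude, I would take the trace from $\C(x)(\tilde\alpha)$ down to $\C(x)$ of the identity $g(x)=\sum_{|\bk|\le L}c_{\bk}\tilde\alpha_{\bk}x^{\bk\cdot\bn}$. Since $g(x)$ and the $\tilde q_{i,\bk}(x)$ lie in $\C(x)$, and $\tilde\alpha_{\bk}=\sum_{i}\tilde q_{i,\bk}(x)\tilde\alpha^{i}$, this gives
\[
  e'\,g(x)=\sum_{|\bk|\le L}c_{\bk}\,x^{\bk\cdot\bn}\sum_{i=0}^{e-1}\tilde q_{i,\bk}(x)\,p_{i},\qquad e':=[\C(x)(\tilde\alpha):\C(x)]\ge1,
\]
a finite sum --- with a number of summands bounded in terms of $d,l,L$ --- of monomials times ratios of fewnomials. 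Taking a common denominator, which is a product of boundedly many fewnomials and hence again a fewnomial with boundedly many terms, and absorbing negative exponents into a monomial, one exhibits $g(x)$ as a ratio of two polynomials with at most $C_{5}=C_{5}(d,l,L)$ terms.

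The step I expect to be the main obstacle is the control of $\mu$. The naive hope that $g(x)$ is simply the coefficient of $\tilde\alpha^{0}$ in its expansion in powers of $\tilde\alpha$ fails because $1,\tilde\alpha,\dots,\tilde\alpha^{e-1}$ need not be $\C(x)$-linearly independent --- that is, $\C(x)(\tilde\alpha)$ may be a proper subfield of a splitting field of $q(x^{n_{1}},\dots,x^{n_{p}},y)$ over $\C(x)$ --- and the $\C(x)$-linear relations among these powers, namely the coefficients of $\mu$, are not a priori fewnomials. It is precisely to tame them that one has to feed in the factorisation form of the theorem one level down, at $l=p$; once that is available, the trace is the clean device that only ever sees the $\C(x)$-conjugates of $\tilde\alpha$, hence only the coefficients of $\mu$.
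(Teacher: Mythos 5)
Your proposal is correct, and its skeleton coincides with the paper's proof: the same dichotomy on the denominators of the $q_{i,\bk}$ (a vanishing denominator forces two monomials of equal specialised degree, giving the relation with $|k_i|\le 2C_4$), the same specialisation $\alpha\mapsto\tilde\alpha$, and the same key use of the inductive hypothesis at level $p$ --- via the factorisation statement \prettyref{thm:main-factor} --- to control the monic factor of $q(x^{n_1},\dots,x^{n_p},y)$ annihilating $\tilde\alpha$ (the paper invokes it for the irreducible factor, you for the minimal polynomial $\mu$; same thing). Where you diverge is the final extraction of $g(x)$: the paper rewrites the powers $\tilde\alpha^j$, $j\ge e'$, in terms of $1,\tilde\alpha,\dots,\tilde\alpha^{e'-1}$ using the (fewnomial-coefficiented) factor, and then reads $g(x)$ off as the coefficient of $\tilde\alpha^0$ by $\C(x)$-linear independence of these powers, splitting into the cases $e'=e$ and $e'<e$; you instead apply $\mathrm{Tr}_{\C(x)(\tilde\alpha)/\C(x)}$ to the dependence relation and express the power sums $\mathrm{Tr}(\tilde\alpha^i)$ through Newton's identities in the coefficients of $\mu$. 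Both devices are sound (the trace computation $e'g(x)=\sum_{|\bk|\le L}c_{\bk}x^{\bk\cdot\bn}\sum_i\tilde q_{i,\bk}\,\mathrm{Tr}(\tilde\alpha^i)$ is legitimate since $g$, the $\tilde q_{i,\bk}$ and the monomials lie in $\C(x)$, and $e'\ge 1$ is harmless in characteristic $0$); your trace treats $e'=e$ and $e'<e$ uniformly at the small cost of introducing power sums, while the paper's coefficient comparison is more bare-handed. A minor point in your favour: you correctly record the bound as $B_5(C_2L,p)$, reflecting that the relevant polynomial is $q$ of degree $O(L)$ rather than $d$, which is stated somewhat loosely in the paper.
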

\begin{proof}
  Assume first that for some $\bk$ with $|\bk| \leq L$,
  $q_{i,\bk}(x^{n_1}, \dots, x^{n_p})$ is not well-defined, by which
  we mean that some denominator of $q_{i,\bk}$ vanishes on
  $(x^{n_1}, \dots, x^{n_p})$. In turn, two distinct terms of such
  denominator must have the same degree when specialized, which means
  that
  $$
  h_1n_1 + \dots + h_pn_p = h_1'n_1 + \dots + h_p'n_p
  $$
  for some integers $h_i,h_i'$ such that $|h_i|, |h_i'| \leq C_4$ for
  all $i$, and $h_i \neq h_i'$ for at least one $i$. We thus reach the
  former conclusion.

  Otherwise, we specialize $\alpha$ at $t_i = x^{n_i}$ for
  $i = 1, \dots, p$, which is possible since $\alpha$ is by
  construction integral over $\C[t_1, \dots, t_p]$. This yields a
  $\tilde{\alpha} \in \tilde{F}_L$. By the above argument, we can also
  specialize each $q_{i,\bk}$, yielding the following:
  $$
  \tilde{\alpha}_{\bk} = \sum_{i = 0}^{e-1} q_{i,\bk}(x^{n_1}, \dots,
  x^{n_p}){\tilde{\alpha}}^i.
  $$
  In particular, $\tilde{\alpha}$ is a primitive element of
  $\tilde{F}_L$ over $\C(x)$.

  By assumption of linear dependence, we have
  $$
  g(x) = \sum_{|\bk| \leq L} \lambda_{\bk}\tilde{\alpha}_{\bk}x^{\bk
    \cdot \bn} = \sum_{|\bk| \leq L} \lambda_{\bk}x^{\bk \cdot \bn}
  \sum_{i = 0}^{e-1}q_{i,\bk}\tilde{\alpha}^i
  $$
  for some numbers $\lambda_{\bk} \in \C$.

  Let $[\C(x,\tilde{\alpha}):\C(x)] = e' \leq e$. If $e' = e$, then
  $\tilde{\alpha}^0, \dots, \tilde{\alpha}^{e-1}$ are $\C(x)$-linearly
  independent, so we must have
  $$
  g(x) = \sum_{|\bk| \leq L} \lambda_{\bk}x^{\bk \cdot \bn}q_{0,\bk},
  $$
  and the latter conclusion follows.

  If $\tilde{\alpha}^0, \dots, \tilde{\alpha}^{e-1}$ are not
  $\C(x)$-linearly independent, it means that
  $[\C(x,\tilde{\alpha}):\C(x)] = e' < e$. Note that $\tilde{\alpha}$
  is a root of
  $$
  q(x^{n_1}, \dots, x^{n_p}, \tilde{\alpha}) = 0.
  $$
  By hypothesis, we may assume that \prettyref{prop:main-regular}
  holds for $l = p$, and in particular we may apply
  \prettyref{thm:main-factor}. It follows that the irreducible factor
  of $q(x^{n_1}, \dots, x^{n_p}, y)$ of which $\tilde{\alpha}$ is a
  root has coefficients that can be written as ratio of polynomials
  with at most $B_5(d,p)$ terms. In turn, we may rewrite each power
  $\tilde{\alpha}^j$, for $j \geq e'$, as
  $$
  \tilde{\alpha}^j = \sum_{i = 0}^{e'-1}r_{i,j}\tilde{\alpha}^i
  $$
  where each $r_{i,j} \in \C(x)$ can be written as the ratio of two
  polynomials whose number of terms is bounded in terms of $d$ and $p$
  only.

  Therefore, we have
  $$
  g(x) = \sum_{|\bk| \leq L} \lambda_{\bk}x^{\bk \cdot \bn} \sum_{i =
    0}^{e'-1}\left(q_{i,\bk} + \sum_{j = e'}^e
    q_{j,\bk}r_{i,j}\right)\tilde{\alpha}^i.
  $$
  Since $\tilde{\alpha}^0, \dots, \tilde{\alpha}^{e'-1}$ are
  $\C(x)$-linearly independent, we may conclude as in the case
  $e' = e$.
\end{proof}

\section{Proof of the main theorem}
\label{sec:induction}

Finally, we can prove \prettyref{prop:main-regular}. We shall then
prove that it implies \prettyref{thm:main-poly}.

\begin{proof}[Proof of \prettyref{prop:main-regular}]
  First of all, we may directly assume that
  $0 < n_1 \leq \dots \leq n_l$: indeed, we may simply rearrange
  $t_1, \dots, t_l$ as required.

  We then work by primary induction on $l \in \N^{*}$ and secondary
  reverse induction on $p = l - 1, \dots, 0$. Our inductive hypothesis
  at stage $(l, p)$ reads as follows:
  \begin{enumerate}
  \item either $g(x)$ is the ratio of two polynomials with at most
    $B_9(d,l,p)$ terms,
  \item or $n_p \geq \varepsilon_pn_l$ for some
    $\varepsilon_p = \varepsilon_p(d, l)$.
  \end{enumerate}

  Note that in the case $(l, p) = (1, 0)$, the conclusion is trivial:
  the expansion (\ref{eq:pseudo-analytic}) is just
  $$
  g(x) = \sum_{k \in \N} \tilde{\alpha}_k x^{kn_l}
  $$
  where $\tilde{\alpha}_k \in \C$. Since the degree of $g$ in $x$ is
  at most $dn_l$, by simple degree considerations, $g(x)$ is a
  polynomial with at most $d+1$ terms, reaching conclusion (1).

  Similarly, in the case $(l, 0)$, the expansion is of the type
  $$
  g(x) = \sum_{\bk \in \N^l} \tilde{\alpha}_{\bk} x^{\bk \cdot \bn}
  $$
  where $\tilde{\alpha}_{\bk} \in \C$. Since again the degree of $g$
  in $x$ is at most $dn_l$, the only terms appearing on the right hand
  side satisfy $\bk \cdot \bn \leq dn_l$. On the other hand,
  $\bk \cdot \bn \geq |\bk|n_1 \geq |\bk|\varepsilon_0n_l$. Therefore,
  $$
  |\bk| \leq \frac{d}{\varepsilon_0}.
  $$
  In turn, this implies that the number of terms of $g(x)$ can be
  bounded in terms of $l$ and $\varepsilon_0 = \varepsilon_0(d,l)$, so
  in terms of $d$ and $l$ only, reaching again conclusion (1).

  We now wish to prove the general case for arbitrary $(l, p)$. Assume
  that either $p = l-1$, or that we have already proven stage
  $(l, p+1)$. \prettyref{prop:applied-s-units} yields three possible
  conclusions: in the first case, we obtain that $g(x)$ is
  $\C$-linearly dependent on
  $\{ \tilde{\alpha}_{\bk}x^{\bk \cdot \bn} \,:\, |\bk| \leq L \}$ for
  a suitably chosen $L$; in the second case, we reach conclusion (2)
  straightaway; in the third case, we obtain that
  $h_1n_1 + \dots + h_pn_p = 0$ for some integers $h_i$ of bounded
  size, in which case we reach conclusion (1) by
  \prettyref{lem:linear-dep-exp} and the inductive hypothesis.

  For the first case, we then apply \prettyref{prop:gx-lin-dep}, and
  we either reach conclusion (1) immediately, or we find again that
  $h_1n_1 + \dots + h_pn_p = 0$ for integers $h_i$ of bounded size, so
  we reach conclusion (1) again by \prettyref{lem:linear-dep-exp} and
  the inductive hypothesis. This concludes our induction.
\end{proof}

Chasing back the series of deductions, this finally proves
\prettyref{thm:main}. The proof of \prettyref{thm:main-poly} now
follows the same argument found in \cite{Zannier2008}.

\begin{proof}[Proof of \prettyref{thm:main-poly}]
  By \prettyref{thm:main}, a rational function $g(x)\in\C(x)$ such
  that
  \[
    f(x^{n_{1}},\dots,x^{n_{l}},g(x))=0
  \]
  can always be written as the ratio of two polynomials, say
  $g_{1}(x)$ and $g_{2}(x)$, with at most $B_{4}$ terms.

  As in \cite{Zannier2008}, we may exploit this information to show
  that we may explicitly parametrize all such polynomials $g_{1}$,
  $g_{2}$. Indeed, for $\bk \in \N^l$, let $|\bk|_{\infty}$ be the
  maximum absolute value of its entries; for $r=1,2$, if
  \[
    g_{r}(x)=\sum_{k=1}^{B_{4}}b_{rk}x^{n_{rk}}
  \]
  and, writing ${\bf t}^{\bk}$ for $t_1^{k_1}\cdot\ldots\cdot t_l^{k_l}$,
  \[
    f(t_{1},\dots,t_{l},y)=\sum_{i=0}^{d}\sum_{|\bk|_{\infty}\leq
      d}a_{i\bk}y^{i}{\bf t}^{\bk},
  \]
  we have that
  \begin{equation}
    \sum_{i=0}^{d}\sum_{|\bk|_{\infty}\leq d}a_{i\bk}\left(\sum_{k=1}^{B_{4}}b_{1k}x^{n_{1k}}\right)^{i}\left(\sum_{k=1}^{B_{4}}b_{2k}x^{n_{2k}}\right)^{d-i}x^{\bn\cdot\bk}=0.\label{eq:two-cond}
  \end{equation}

  We now expand all the involved products to get monomials of the
  shape $\gamma x^{\mu}$, where $\gamma$ is a monomial in the
  coefficients $a_{i\bk}$ and $b_{rk}$, and $\mu$ is a positive
  $\Z$-linear combination of the exponents $n_{rk}$ and $n_{i}$. In
  order to satisfy \prettyref{eq:two-cond}, we can recognize two types
  of conditions.

  \medskip

  (I) The first type concerns the exponents $\mu$ of $x$.  We can
  partition the monomials $\gamma x^\mu$ by grouping the ones with the
  same $\mu$. For each set of the partition, the corresponding
  expressions of $\mu$ must have the same value, producing several
  vanishing homogeneous linear forms with integer coefficients in the
  $n_i,n_{rk}$. Note that the coefficients of such linear forms are
  bounded in terms of $d$ only. Moreover, since the number of possible
  partitions is bounded in terms of $d$ and $l$, there is a bound on
  the number of resulting linear equations.

  \medskip

  (II) For a fixed partition of the monomials $\gamma x^\mu$ with the
  same $\mu$ as in (I), the sum of their coefficients must be
  zero. This yields an affine algebraic variety whose coordinates
  correspond to the coefficients $b_{rk}$.

  \medskip

  Each solution $g_1(x),g_2(x)$ of \prettyref{eq:two-cond} yields a
  solution to a linear equation as in (I) and a point on the
  corresponding algebraic variety given in (II). Vice versa, each
  solution to a linear equation as in (I) and a point on the
  corresponding algebraic variety in (II) yield two polynomials
  $g_1(x),g_2(x)$ satisfying \prettyref{eq:two-cond}.

  Suppose now that we fix a set of linear equations as in (I), given
  by a partition of the exponents, and a point in the algebraic
  variety found in (II), but we let the exponents $n_{rk}$ vary among
  all the possible solutions. Since the (vector) solutions of such a
  system of linear equations span a subgroup of $\Z^{2B_{4}}$, we may
  in fact find a $\Z$-basis, say with $s\leq2B_{4}$ elements, whose
  entries are bounded only in terms of $d$ and $l$; we may then write
  each solution as linear combinations of these basis vectors, with
  integer coefficients $u_{1},\dots,u_{s}$.  After this substitution,
  we may rewrite the resulting polynomials $g_{1}$ and $g_{2}$ as
  \[
    g_{r}(x)=\tilde{g}_{r}(x^{u_{1}},\dots,x^{u_{s}}),\; r=1,2
  \]
  and $f$ as
  \[
    f(x^{n_{1}},\dots,x^{n_{l}},y)=\tilde{f}(x^{u_{1}},\dots,x^{u_{s}},y),
  \]
  where $\tilde{f}$, $\tilde{g}_{1}$ and $\tilde{g}_{2}$ are certain
  Laurent polynomials in $\C[z_{1}^{\pm1},\dots,z_{s}^{\pm1},y]$. Note
  that moreover their degrees are bounded in terms of the basis
  vectors and hence may be bounded in terms of $d$ and $l$ only.

  Now, the equality
  \[
    \tilde{f}\left(x^{u_{1}},\dots,x^{u_{s}},\frac{\tilde{g}_{1}(x^{u_{1}},\dots,x^{u_{s}})}{\tilde{g}_{2}(x^{u_{1}},\dots,x^{u_{s}})}\right)=0
  \]
  is satisfied for \emph{all} $u_{1},\dots,u_{s}$ in $\Z$, and
  therefore we actually have that
  \[
    \tilde{f}\left(z_{1},\dots,z_{s},\frac{\tilde{g}_{1}(z_{1},\dots,z_{s})}{\tilde{g}_{2}(z_{1},\dots,z_{s})}\right)=0.
  \]

  Since $\tilde{f}$ is monic in $y$, this implies that
  $\frac{\tilde{g}_{1}}{\tilde{g}_{2}}$ is integral over
  $\C[z_{1}^{\pm1},\dots,z_{s}^{\pm1}],$ and therefore it is a Laurent
  polynomial in $\C[z_{1}^{\pm1},\dots,z_{s}^{\pm1}]$; moreover, the
  number of terms as a Laurent polynomial is bounded dependently on
  $d$ and $l$ because the degree of $\tilde{f}$ is likewise bounded.

  Therefore, since any $g(x)$ satisfying \prettyref{eq:main-poly} can
  be obtained using the above procedure, we have that $g(x)$ must be a
  Laurent polynomial in $\C[x^{\pm1}]$ with a number of terms bounded
  dependently on $d$ and $l$. Now, since $g(x)$ is integral over
  $\C[x]$, then all of its monomials have non-negative degree, and
  therefore it is a polynomial with a bounded number of terms, as
  desired.
\end{proof}

\section{Proofs of the remaining assertions}

From \prettyref{thm:main} we can now deduce the various statements
given in \prettyref{sec:intro} with relatively small effort.

\medskip

We first prove \prettyref{thm:vojta} and its \prettyref{cor:vojta} on
integral points, i.e., regarding the regular maps $\rho:\Gm\to W$ for
a given finite cover $W\to \Gm^l$.

\begin{proof}[Proof of \prettyref{thm:vojta}]
  We first note that it suffices to prove the conclusion for a finite
  set of regular functions $y$ on $W$. Therefore, we may assume that
  $W$ may be represented as the hypersurface $f(t_1,\dots,t_l,y)=0$,
  where $f$ is monic in $y$, Laurent in the $t_i$'s, and $\pi$ is the
  projection onto the first $l$ coordinates.

  With this proviso, we go to the proof.  A regular map
  $\rho:\Gm\to W$ may be represented in the form
  $x\mapsto(\theta_{1}x^{m_{1}},\ldots,\theta_{r}x^{m_{r}},g(x))$,
  where $\theta_{i}\in\C^{*}$, $m_{i}\in\Z$ and $p\in\C[x,x^{-1}]$.
  Thus $f(\theta_{1}x^{m_{1}},\ldots,\theta_{r}x^{m_{r}},g(x))=0$.

  By \prettyref{thm:main}, and using the same argument of the proof of
  \prettyref{thm:main-poly}, we see that each choice of the
  coefficients $\theta_i$ and of the polynomial $g(x)$ corresponds to
  an integer solution of a system of linear equations (I) and to a
  point on an algebraic variety (II).

  Now, for each system (I), let $s$ be the rank of its solution space,
  and let $V$ be the corresponding algebraic variety (II). By
  construction, we obtain a map $\psi : V \times \Gm^s \to W$. The
  above comment on $\theta_i$ and $g(x)$ implies that there is map
  $\gamma : \Gm\to \Gm^s$, given by the solution of the system (I)
  corresponding to $g(x)$, and a point $\xi \in V$ corresponding to
  the coefficients of $g(x)$ and the $\theta_i$'s, such that in fact
  $\rho = \psi_\xi \circ \gamma$. Since the number of possible
  systems, and therefore of maps $\psi$, is bounded in terms of $d$
  and $l$, this yields the desired conclusion.
\end{proof}

\begin{proof}[Proof of \prettyref{cor:vojta}]
  We first remark a few things about the conclusion of
  \prettyref{thm:vojta}. First, we observe that since a regular map
  from $\Gm^{s}$ to $\Gm$ is a monomial, each
  $\pi\circ\psi:V\times\Gm^{s}\to\Gm^{l}$ is of the shape
  $\{\xi\}\times(z_{1},\ldots,z_{s})\mapsto(c_{1}(\xi)\mu_{1},\ldots,c_{l}(\xi)\mu_{l})$,
  for non vanishing functions $c_{i}$ on $V$ and pure monomials
  $\mu_{i}$ in the $z_{j}$. Also, since the map
  $(z_{1},\ldots,z_{s})\mapsto(\mu_{1},\ldots,\mu_{l})$ is a
  homomorphism, after an automorphism of $\Gm^{s}$ it factors as a
  projection $\Gm^{s}\to\Gm^{t}$ times a homomorphism with finite
  kernel; hence, $t\le l$, and we may in fact take $s=t\le l$.
  (Indeed, the map $\psi:V\times\Gm^{t}\times\Gm^{s-t} \to W$ sends
  $\{\xi\}\times\{\eta\}\times\Gm^{s-t}$ to a fiber of $\pi$, which is
  finite; hence this image is constant, and we may remove $\Gm^{s-t}$
  from the picture.)

  Then, after pullback of $\pi$ by an isogeny, we may assume that
  $\Gm^{t}$ embeds in $\Gm^{l}$ on the first $t$
  coordinates. Therefore, we can see that the map $\psi$ yields a
  family of translates of $\Gm^{t}$ parametrized by $V$, and
  corresponding regular sections of $\pi$ over each of them.

  Turning back to the proof, we note that the hypothesis combined with
  \prettyref{thm:vojta} imply immediately that one of the maps
  $\psi\in\Psi$ is dominant. Therefore, the composition
  $\pi\circ\psi:V\times\Gm^s\to\Gm^l$ is regular, dominant and (by the
  previous remarks) we may even suppose that it is expressed in the
  shape
  $\pi\circ\psi(\{\xi\}\times(z_1,\dots,z_s)) =
  (c_1(\xi)z_1,\dots,c_s(\xi)z_s,c_{s+1}(\xi)\mu_{s+1},\dots,c_l(\xi)\mu_l)$
  where $\mu_i$ are monomials in $z_1,\dots,z_s$, $c_1,\dots,c_l$ are
  non-vanishing regular functions on $V$, and $s\geq 1$.

  If we fix a point $\xi\in V$, the restriction of $\pi\circ\psi$ to
  $\{\xi\}\times\Gm^s$ is an isogeny, and therefore unramified.  We
  define $R\subset W$ as the ramification divisor of $\pi$, and
  $S=\pi(R)\subset \Gm^l$ as the branch locus. Let, for
  $z\in \Gm^{l-s}$, $K_z:=\pi^{-1}(\Gm^s \times \{z\})$. Note that
  $K_z$ may be reducible, even for all $z$. However, the image of
  $\pi\circ \psi$ restricted to $ \{\xi\} \times \Gm^s $ is of the
  shape $\Gm^s \times \{\phi(\xi)\}$ (where $\phi$ is a certain
  regular map $\phi:V\to \Gm^{l-s}$), and the map is essentially an
  isogeny and is finite. Then we have that
  $\psi(V \times \Gm^s)\cap K_z$ consists of a finite union of
  components $C$ of $K_z$ such that $\pi(C)=\Gm^s \times \{z\}$.

  Since $\psi$ is dominant, it follows easily (by counting dimensions)
  that $\psi(V\times\Gm^s)$ can miss a whole component of $K_z$ only
  for $z$ in a proper closed subset $E$ of $\Gm^{l-s}$. On the other
  hand, since the said map is essentially an isogeny, $R$ cannot meet
  its image, so $R\cap K_z$ is contained in the components missed by
  $\psi(V\times\Gm^s)\cap K_z$.

  Therefore $R\cap K_z$ can be nonempty only for $z\in E$, and then
  the projection of $S$ to $\Gm^{l-s}$ is contained in $E$. Since $S$
  has pure codimension $1$ in $\Gm^l$, it follows that $S$ is a union
  of cosets of $\Gm^s$, and is therefore invariant by multiplication
  by $\Gm^s$.
\end{proof}

The proof of the toric version of Bertini's theorem \ref{thm:bertini}
follows a similar pattern:

\begin{proof}[Proof of \prettyref{thm:bertini}]
  Let us assume first that $W$ is representable as an open dense
  subset of the hypersurface $f(t_1, \dots, t_l, y) = 0$, where $f$ is
  an irreducible complex polynomial, and $\pi$ is the projection onto
  the first $l$ coordinates.

  Let us analyze a factorization
  $f(\theta_{1}x^{n_{1}},\ldots,\theta_{l}x^{n_{l}},y)=g(x,y)h(x,y)$
  with integers $n_{i}$ and polynomials (Laurent in $x$) $g,h$, monic
  in $y$. By \prettyref{thm:main}, and proceeding as in the proof of
  \prettyref{thm:main-poly}, we can see that the pairs $g,h$
  correspond to solutions of suitable systems (I) and to points on the
  corresponding affine algebraic varieties (II).

  Now, fix a system (I) and a point on the algebraic variety of
  (II). As before, if $s$ is the rank of the solution space, we can
  easily obtain the following factorization:
  \begin{equation}
    f(\theta_{1}\mu_{1}, \ldots, \theta_{l}\mu_{l}, y) = \tilde{g}(z_{1}, \ldots, z_{s}, y) \tilde{h}(z_{1}, \ldots, z_{s}, y), \label{eq:bertini}
  \end{equation}
  where $\mu_1,\dots,\mu_l$ are (Laurent) monomials in $z_1$, $\dots$,
  $z_s$ and $\tilde{g}$, $\tilde{h}$ are polynomials (Laurent in the
  $z_i$'s) and monic in $y$.

  Now, suppose the monomials $\mu_{1},\ldots,\mu_{l}$ are
  multiplicatively independent. This means that the homomorphism
  $\phi:\Gm^{s}\to\Gm^{l}$ given by
  $\phi(z_{1},\ldots,z_{s})=(\mu_{1},\ldots,\mu_{l})$ is surjective.
  By simple general theory, it must factor as a composition of a
  projection $\Gm^{r}\times\Gm^{s-r}\to\Gm^{r}$ and an isogeny $\psi$
  of $\Gm^{r}$. But then the identity (\ref{eq:bertini}) shows that
  the pullback $\psi^{*}W$ is reducible; now, it is known and not too
  difficult to prove that this implies that $[e]^{*}W$ is already
  reducible (see \cite{Zannier2010}, Prop.\ 2.1), against the
  assumptions.

  Therefore, we may assume that in all cases the sets of monomials
  $\mu_{i}$ so obtained are multiplicatively dependent, hence they
  satisfy an identical relation
  $\mu_{1}^{e_{1}}\cdots\mu_{r}^{e_{r}}=1$ for integer exponents
  $e_{i}$, not all zero and depending only on the linear form chosen
  in (I). In particular, the vector $(e_{1},\ldots,e_{r})$ takes
  altogether only finitely many values.

  Since the $\mu_{i}$'s are pure monomials in the $z_{h}$, we may
  assume that the $e_{i}$'s are coprime. The multiplicative relation
  defines a certain proper connected algebraic subgroup $E$ of
  $\Gm^{l}$, while the corresponding factorization implies that
  $\pi^{-1}(\theta E)$ is reducible for
  $\theta=(\theta_1,\dots,\theta_l)$. Therefore, the original
  $1$-dimensional torus parametrized by $(x^{n_{1}},\ldots,x^{n_l})$
  is contained in $E$. We now let $\mathcal{E}$ to be the union of all
  finitely many sub-tori $E$ which arise in this way. Note that
  $\mathcal{E}$ can be chosen dependently only on $\deg(f)$.

  Now, assume that $\pi^{-1}(\theta H)$ is reducible, for a certain
  $\theta\in\Gm^{l}$ and a certain torus $H$ of dimension $t\ge1$. If
  $(u_{1},\ldots,u_{t})\mapsto(\nu_{1},\ldots,\nu_{l})$ is a
  parametrization of $H$ by monomials $\nu_{i}$ in the $u_{h}$, then
  $f(\theta_{1}\nu_{1},\ldots,\theta_{r}\nu_{r},y) = 0$ is reducible
  (over $\C(u_{1},\ldots, u_{t})$). Hence, simply by specialization,
  the polynomial $f(\theta_{1}x^{n_{1}},\ldots,\theta_{l}x^{n_{l}},y)$
  must be reducible for all integer vectors $(n_{1},\ldots,n_{l})$
  such that the torus $(x^{n_{1}},\ldots,x^{n_{l}})$ is contained in
  $H$.  But then any such torus must be contained in some $E$ as
  above; it is now easy to see that $H$ itself must be contained in
  $\mathcal{E}$, proving the desired conclusion.

  \medskip

  To complete the proof, consider a general quasi-projective variety
  $W$. After replacing $W$ with $W \setminus X$ for a suitable proper
  subvariety $X$, we may assume that $\pi : W \to \Gm^l$ is finite
  onto its image. We note that we may cover $W$ with finitely many
  (open dense) affine charts, such that for any two points of $W$
  there is a chart containing both of them; since $\pi$ is finite over
  its image, we may further assume that each chart can be represented
  as an open dense subset of the hypersurface
  $f(t_1, \dots, t_l, y) = 0$ for some $f$. We then observe that if
  $\pi^{-1}(\theta H)$ has at least two irreducible components, for
  some subgroup $H < \Gm^l$ and some $\theta \in \Gm^l$, then there is
  at least one affine chart intersecting both components, and the
  conclusion follows by the previous case.
\end{proof}

Finally, the only remaining statement is the analogue for composite
rational functions of Schinzel's conjecture, namely
\prettyref{thm:ratfunc1}.

\begin{proof}[Proof of Theorem \ref{thm:ratfunc1}]
  Let $l$ be given and let $f(x)=g(h(x))$ be as in the statement. We
  write $f(x)=P(x)/Q(x)$ with
  $P(x)=p_1x^{n_1}+\cdots
  +p_lx^{n_l},Q(x)=q_1x^{n_1}+\cdots+q_lx^{n_l}, P(x),Q(x)\in\C[x]$.
  If we put $d=2016\cdot 5^l$, we know by the main theorem of
  \cite{Fuchs2012} that $\deg g\leq d$ unless we are in the
  exceptional situation of that theorem, where our statement is
  trivially true. Therefore we may write $g(x)=A(x)/B(x)$ with
  $A(x)=a_0+a_1x+\cdots +a_dx^d,B(x)=b_0+b_1x+\cdots+b_dx^d$ be two
  (coprime) polynomials in $\C[x]$. From $f(x)=g(h(x))$ we therefore
  get \[A(h(x))Q(x)-B(h(x))P(x)=0.\]

  We then define
  \[f(t_1,\ldots,t_l,y)=A(y)(q_1t_1+\cdots
  +q_lt_l)-B(y)(p_1t_1+\cdots+p_lt_l)\in\C[t_1,\ldots,t_l,y].\]
  This is a polynomial of degree at most $d$ in each variable. An
  application of Theorem \ref{thm:main} shows at once that there
  exists a number $B_{2}=B_{2}(l)=B_{4}(d,l)$ such that $h(x)\in\C(x)$,
  which satisfies $f(x^{n_1},\ldots,x^{n_l},h(x))=0$, is the ratio of
  two polynomials in $\C[x]$ with a most $B_{2}$ terms, as desired.
\end{proof}


\begin{thebibliography}{10}

\bibitem{Beukers1995}
Frits Beukers.
\newblock {Ternary Form Equations}.
\newblock {\em Journal of Number Theory}, 54(1):113--133, sep 1995.
\newblock \href {http://dx.doi.org/10.1006/jnth.1995.1105}
  {\path{doi:10.1006/jnth.1995.1105}}.

\bibitem{Bombieri2006}
Enrico Bombieri and Walter Gubler.
\newblock {\em {Heights in Diophantine geometry}}.
\newblock New Mathematical Monographs. Cambridge University Press, Cambridge,
  2006.
\newblock \href {http://dx.doi.org/10.2277/0511138091}
  {\path{doi:10.2277/0511138091}}.

\bibitem{Bombieri2007}
Enrico Bombieri, D.~Masser, and Umberto Zannier.
\newblock {Anomalous Subvarieties--Structure Theorems and Applications}.
\newblock {\em International Mathematics Research Notices}, 2007:1--33, jul
  2007.
\newblock \href {http://dx.doi.org/10.1093/imrn/rnm057}
  {\path{doi:10.1093/imrn/rnm057}}.

\bibitem{Corvaja2006}
Pietro Corvaja and Umberto Zannier.
\newblock {On the Integral Points on Certain Surfaces}.
\newblock {\em International Mathematics Research Notices}, 2006:1--20, 2006.
\newblock \href {http://dx.doi.org/10.1155/IMRN/2006/98623}
  {\path{doi:10.1155/IMRN/2006/98623}}.

\bibitem{Corvaja2008}
Pietro Corvaja and Umberto Zannier.
\newblock {Some cases of Vojta's conjecture on integral points over function
  fields}.
\newblock {\em Journal of Algebraic Geometry}, 17(2):295--333, may 2008.
\newblock \href {http://dx.doi.org/10.1090/S1056-3911-07-00489-4}
  {\path{doi:10.1090/S1056-3911-07-00489-4}}.

\bibitem{Corvaja2013}
Pietro Corvaja and Umberto Zannier.
\newblock {Algebraic hyperbolicity of ramified covers of $\mathbb{G}^2_m$ (and
  integral points on affine subsets of $\mathbb{P}_2$)}.
\newblock {\em Journal of Differential Geometry}, 93(3):355--377, mar 2013.
\newblock URL: \url{http://projecteuclid.org/euclid.jdg/1361844938}.

\bibitem{Corvaja2013a}
Pietro Corvaja and Umberto Zannier.
\newblock {Finiteness of odd perfect powers with four nonzero binary digits}.
\newblock {\em Annales de l'institut Fourier}, 63(2):715--731, 2013.

\bibitem{Davenport2009}
James~Harold Davenport and Jacques Carette.
\newblock {The Sparsity Challenges}.
\newblock In {\em 2009 11th International Symposium on Symbolic and Numeric
  Algorithms for Scientific Computing}, pages 3--7. IEEE, sep 2009.
\newblock \href {http://dx.doi.org/10.1109/SYNASC.2009.62}
  {\path{doi:10.1109/SYNASC.2009.62}}.

\bibitem{Erdos1949}
Paul Erd\H{o}s.
\newblock {On the number of terms of the square of a polynomial}.
\newblock {\em Nieuw Archief voor Wiskunde (2)}, 23:63--65, 1949.

\bibitem{Fried2008}
Michael~D. Fried and Moshe Jarden.
\newblock {\em {Field Arithmetic}}.
\newblock Ergebnisse der Mathematik und ihrer Grenzgebiete. 3. Folge / A Series
  of Modern Surveys in Mathematics. Springer Berlin Heidelberg, Berlin,
  Heidelberg, 2008.
\newblock \href {http://dx.doi.org/10.1007/978-3-540-77270-5}
  {\path{doi:10.1007/978-3-540-77270-5}}.

\bibitem{Fuchs2014v1}
Clemens Fuchs, Vincenzo Mantova, and Umberto Zannier.
\newblock {On fewnomials, integral points and a toric version of Bertini's
  theorem}.
\newblock dec 2014.
\newblock \href {http://arxiv.org/abs/1412.4548v1} {\path{arXiv:1412.4548v1}}.

\bibitem{Fuchs2012}
Clemens Fuchs and Umberto Zannier.
\newblock {Composite rational functions expressible with few terms}.
\newblock {\em Journal of the European Mathematical Society}, 14(1):175--208,
  2012.
\newblock \href {http://dx.doi.org/10.4171/JEMS/299}
  {\path{doi:10.4171/JEMS/299}}.

\bibitem{Khovanski1991}
Askold~N. Khovanskii.
\newblock {\em {Fewnomials}}.
\newblock Translations of Mathematical Monographs. American Mathematical
  Society, 1991.

\bibitem{Lu2010}
Steven S.~Y. Lu.
\newblock {On surfaces of general type with maximal Albanese dimension}.
\newblock {\em Journal f{\"{u}}r die reine und angewandte Mathematik (Crelles
  Journal)}, 2010(641), jan 2010.
\newblock \href {http://dx.doi.org/10.1515/crelle.2010.032}
  {\path{doi:10.1515/crelle.2010.032}}.

\bibitem{Mason1984}
R.~C. Mason.
\newblock {\em {Diophantine Equations over Function Fields}}, volume~96 of {\em
  London Mathematical Society Lecture Note Series}.
\newblock Cambridge University Press, Cambridge, 1984.

\bibitem{Noguchi2014}
Junjiro Noguchi and J{\"{o}}rg Winkelmann.
\newblock {\em {Nevanlinna Theory in Several Complex Variables and Diophantine
  Approximation}}, volume 350 of {\em Grundlehren der mathematischen
  Wissenschaften}.
\newblock Springer Japan, Tokyo, 2014.
\newblock \href {http://dx.doi.org/10.1007/978-4-431-54571-2}
  {\path{doi:10.1007/978-4-431-54571-2}}.

\bibitem{Schinzel1987}
Andrzej Schinzel.
\newblock {On the number of terms of a power of a polynomial}.
\newblock {\em Acta Arithmetica}, 49(1):55--70, 1987.
\newblock URL: \url{https://eudml.org/doc/206069}.

\bibitem{Schinzel2000}
Andrzej Schinzel.
\newblock {\em {Polynomials with special regard to reducibility}}.
\newblock Encyclopedia of Mathematics and its Applications. Cambridge
  University Press, Cambridge, 2000.

\bibitem{Schinzel2009}
Andrzej Schinzel and Umberto Zannier.
\newblock {On the number of terms of a power of a polynomial}.
\newblock {\em Rendiconti Lincei - Matematica e Applicazioni}, 20(1):95--98,
  2009.
\newblock \href {http://dx.doi.org/10.4171/RLM/534}
  {\path{doi:10.4171/RLM/534}}.

\bibitem{Zannier2008}
Umberto Zannier.
\newblock {On composite lacunary polynomials and the proof of a conjecture of
  Schinzel}.
\newblock {\em Inventiones mathematicae}, 174(1):127--138, apr 2008.
\newblock \href {http://dx.doi.org/10.1007/s00222-008-0136-8}
  {\path{doi:10.1007/s00222-008-0136-8}}.

\bibitem{Zannier2010}
Umberto Zannier.
\newblock {Hilbert irreducibility above algebraic groups}.
\newblock {\em Duke Mathematical Journal}, 153(2):397--425, jun 2010.
\newblock URL: \url{http://projecteuclid.org/euclid.dmj/1274902084}.

\bibitem{Zannier2012}
Umberto Zannier.
\newblock {\em {Some Problems of Unlikely Intersections in Arithmetic and
  Geometry}}.
\newblock Princeton University Press, 2012.

\end{thebibliography}
\end{document}